\newtheorem{thm}{Theorem}[section]
\newtheorem*{thm*}{Theorem}
\newtheorem{lem}[thm]{Lemma}
\newtheorem*{prob*}{Problem}
\newtheorem{prop}[thm]{Proposition}
\newtheorem*{prop*}{Proposition}
\newtheorem{cor}[thm]{Corollary}
\newtheorem*{cor*}{Corollary}
\theoremstyle{definition}
\newtheorem{defn}[thm]{Definition}
\newtheorem*{defn*}{Definition}
\newtheorem{notation}[thm]{Notation}
\newtheorem{remark}[thm]{Remark}
\newtheorem{example}[thm]{Example}
\newtheorem*{question*}{Question}
\newtheorem*{Pquestion*}{Popa's question}
\newtheorem*{conv*}{Convention}
\def\cal{\mathcal}
\def\dotminussym#1#2{%
  \setbox0=\hbox{$\m@th#1-$}%
  \kern.5\wd0%
  \hbox to 0pt{\hss\hbox{$\m@th#1-$}\hss}%
  \raise.6\ht0\hbox to 0pt{\hss$\m@th#1.$\hss}%
  \kern.5\wd0}
\newcommand\ee{{\varepsilon}}
\newcommand{\inp}[2]{\left\langle#1,#2\right\rangle}
\newcommand{\twoone}{II$_1$ }
\newcommand{\cB}{ {\cal B} }
\newcommand{\cF}{ {\cal F} }
\newcommand{\cU}{ {\cal U} }
\newcommand{\cC}{ {\cal C} }
\newcommand{\cH}{ {\cal H} }
\newcommand{\cK}{ {\cal K} }
\newcommand{\cN}{ {\cal N} }
\newcommand{\cR}{ {\cal R} }
\newcommand{\bN}{ {\mathbb{N}} }
\newcommand{\bR}{ {\mathbb{R}} }
\newcommand{\bC}{ {\mathbb{C}} }
\newcommand{\bZ}{ {\mathbb{Z}} }
\newcommand{\keys}[1]{\small \textbf{\textit{Keywords---}} #1}
\begin{document}
%%%%%%%%%%%%%%%%%%%%%%%%%%%%%%%%%%%%%%%%%%%%%%

\title{Primeness of Generalized Wreath Product \twoone Factors}

\author{Gregory Patchell}
\email{gpatchel@ucsd.edu}
%\urladdr{greg.com}
\address{Department of Mathematical Sciences\\University of California San Diego}
\maketitle

\begin{abstract}
In this article we investigate the primeness of generalized wreath product \twoone factors using deformation/rigidity theory techniques. We give general conditions relating tensor decompositions of generalized wreath products to stabilizers of the associated group action and use this to find new examples of prime \twoone factors.
%In this  note we include some remarks on relative (T) for inclusions of von Neumann algebras. In particular, we generalize some facts concerning bounded generation and upgrading relative (T) from the group setting to the von Neumann algebra setting. We make use of the approach of J. Peterson to define relative (T) using the theory of derivations. This note contains a significant expository component.
\end{abstract}

\keys{Von Neumann Algebras, II$_1$ Factors, Functional Analysis, Operator Algebras, Primeness, Generalized Bernoulli Actions, Deformation/Rigidity}

\section{Introduction}
\label{sect-intro}

Tensor products of von Neumann algebras have been considered for as long as von Neumann algebras have, harkening all the way back to Murray and von Neumann's seminal work \cite{murray1936rings}. However, it was nearly a half-century before an explicit example of a \textit{prime} \twoone factor was given by Popa \cite{popa1983orthogonal}; that is, he showed that the free group factors with uncountably many generators admit no nontrivial tensor decompositions. Fifteen years later, Ge showed that all free group factors are prime \cite{ge1998applications}, and in 2003 Ozawa showed that $L\Gamma$ is prime for all icc hyperbolic groups $\Gamma$ \cite{ozawa2004some}. In 2006, Peterson additionally showed that $L\Gamma$ is prime for all icc groups $\Gamma$ with positive first $\ell^2$-Betti number \cite{peterson20092}.

Besides von Neumann algebras arising from groups, another natural class of algebras comes from group actions. In 2006, using his deformation/rigidity theory, Popa showed a primeness result related to the Bernoulli action of a nonamenable group: if $B$ is a nontrivial amenable tracial von Neumann algebra and $\Gamma$ is nonamenable, then $B^\Gamma \rtimes \Gamma$ is prime \cite{popa2008superrigidity}. More recently, in 2019, Isono and Marrakchi showed the same result but where $B$ is allowed to be \textit{any} nontrivial von Neumann algebra \cite{isono2019tensor}.

Both Popa and Isono and Marrakchi exploit the mixingness of the Bernoulli action \cite{popa2008superrigidity,isono2019tensor}. Indeed, the action $\Gamma\curvearrowright \Gamma$ has finite (in fact trivial) stabilizers, so $B^\Gamma\rtimes\Gamma$ is mixing relative to $L\Gamma$ (see, e.g., \cite{boutonnet2014several}). Isono and Marrakchi also exploit the rigidity of \textit{full factors}, namely \twoone factors with no nontrivial central sequences. (I.e., non-$\Gamma$ factors. See \cite{connes1976classification}.) They combine rigidity with the recently discovered property of \textit{weak bicentralization} \cite{bannon2020full} to make deductions about certain categories of bimodules. 

In this paper, we use the framework of Popa's deformation/rigidity theory and apply Isono and Marrakchi's techniques to a more general type of actions, namely, generalized Bernoulli actions, which have been studied in, for example, \cite{ioana2013class,vaes2008explicit,popa2008strong}. This immediately opens up many new possibilities. In particular, the action may no longer be mixing and could have very large stabilizers. We will later see that this greatly impacts the categories of bimodules. We are still able to deduce primeness for a large class of examples:

\begin{thm}
\label{cor-main}
Let $\Gamma\curvearrowright I$ be a faithful, nonamenable, transitive action of a non-inner amenable countable group on a countable set such that $\mathrm{Stab}(F)$ is amenable for some finite set $F\subset I.$ Let $(B,\tau)$ be a \twoone factor. Then $M = B^I\rtimes\Gamma$ is prime.
\end{thm}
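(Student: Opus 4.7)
The plan is to run a deformation/rigidity argument in the spirit of Popa and Isono--Marrakchi, adapted to the non-mixing generalized Bernoulli action. Suppose for contradiction that $M = P\,\bar\otimes\,Q$ is a nontrivial tensor decomposition into diffuse \twoone factors. A preliminary observation is that non-inner amenability of $\Gamma$ forces $\Gamma$ to be icc (any nontrivial finite conjugacy class would yield a conjugation-invariant mean on $\Gamma\setminus\{e\}$), so $L\Gamma$ and $M$ are genuine \twoone factors; moreover, non-inner amenability is the natural hypothesis guaranteeing that $M$ is a \emph{full} factor, which makes the Isono--Marrakchi framework of rigid full factors and weak bicentralization applicable.

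The central step is Popa's malleable deformation $(\alpha_t)_{t \in \mathbb{R}}$ of the generalized Bernoulli action, extending $M$ into a larger \twoone factor $\widetilde M \supset M$ on which $\alpha_t$ becomes inner. A spectral gap/transferability dichotomy applied to the tensor decomposition gives that, up to swapping $P$ and $Q$, $\alpha_t$ converges uniformly to the identity on the unit ball of $Q$. Popa's intertwining-by-bimodules criterion then promotes this rigidity to an embedding of $Q$ into a proper ``small'' subalgebra of $M$. In the classical Bernoulli case the only such subalgebra is $L\Gamma$; for generalized Bernoulli the natural targets are the \emph{stabilizer corners} $B^F \,\bar\otimes\, L(\Stab(F))$ for finite $F \subset I$, which are left invariant by the deformation. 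Producing this localized embedding $Q \prec_M B^F \,\bar\otimes\, L(\Stab(F))$ is where the paper's announced general theorem relating tensor decompositions of generalized wreath products to stabilizers must enter.

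Once such an embedding is obtained, the hypothesis on stabilizers is exploited. Amenability of $\Stab(F)$ makes the target amenable relative to $B^F \subset B^I$; hence $Q$ itself is amenable relative to $B^I$ in Popa's sense. Since $P = Q' \cap M$ must absorb all the non-amenability of $M$, and since the action $\Gamma \curvearrowright I \cong \Gamma/\Stab(i_0)$ is a nonamenable $\Gamma$-space (Fell's absorption: $\ell^2(\Gamma/\Stab(i_0)) \prec \lambda_\Gamma$, so no nontrivial almost-invariant vectors by nonamenability of $\Gamma$), a relative-commutant computation for $N_F = B^F \,\bar\otimes\, L(\Stab(F))$ inside $M$ should force $P$ to contain essentially all of $M$, contradicting the nontriviality of the tensor decomposition.

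The principal obstacle is the intertwining step: locating the rigid factor inside a specific stabilizer corner. In the mixing (classical Bernoulli) case, weak mixingness of $\Gamma \curvearrowright \Gamma$ collapses every potential corner to $L\Gamma$ itself; in our generalized setting the deformation leaves every $B^F \,\bar\otimes\, L(\Stab(F))$ invariant, so one has to decide between them and, moreover, exclude the possibility that $Q$ distributes its mass across infinitely many corners. Combining the Popa deformation with the weak bicentralization technology of Isono--Marrakchi, together with a careful analysis of the \emph{category} of $M$-bimodules that the paper's introduction highlights as enlarged in the non-mixing case, is what I expect to be the real technical heart of the argument.
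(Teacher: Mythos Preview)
Your outline has two structural gaps relative to the paper's argument.

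First, the dichotomy is not ``$\alpha_t$ converges uniformly on one of $P,Q$ up to swapping''. Spectral gap yields uniform convergence of $\alpha_t$ on $(P)_1$ only under the hypothesis $Q'\cap\tilde M^\omega\subset M^\omega$, and there is no a priori reason this (or the symmetric statement with $P$ and $Q$ exchanged) must hold. The paper's actual dichotomy is on this hypothesis itself: if $Q'\cap\tilde M^\omega\subset M^\omega$, the deformation argument runs and gives $P\prec_M L(\Stab(F))$ for \emph{every} finite $F$; if instead $Q'\cap\tilde M^\omega\not\subset M^\omega$, one extracts nonzero $Q$-almost-central vectors in $L^2(\tilde M)\ominus L^2(M)=\bigoplus_{\cF}L^2(\langle M,A_{\cF}\rangle)$, and weak bicentralization of the enlarged algebras $N_{\cF}=(Z(B)^F\overline{\otimes}B^{I\setminus F})\rtimes\mathrm{Norm}(\cF)$ together with fullness of $Q$ upgrades the resulting weak containment to an honest $Q\prec_M N_{\cF}$ for some $\cF$. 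So weak bicentralization is the \emph{alternative branch} of the dichotomy, not a tool for refining the deformation branch as your last paragraph suggests.

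Second, your endgame is different from the paper's and is not completed. You propose that $Q\prec_M B^F\overline{\otimes}L(\Stab(F))$ with $\Stab(F)$ amenable should make $Q$ amenable relative to $B^I$, after which $P$ is forced to be ``essentially all of $M$''. The passage from intertwining to relative amenability is not automatic, and the last step is left vague. The paper's route is instead \emph{symmetric}: run the dichotomy above for both orderings $(P,Q)$ and $(Q,P)$; amenability of some $\Stab(F)$ together with fullness of $P$ and $Q$ rules out the first branch in both cases, so \emph{both} $P\prec_M B^{I\setminus F}\rtimes\Stab(F)$ and $Q\prec_M B^{I\setminus G}\rtimes\Stab(G)$ for some finite $F,G$ (here $Z(B)=\bC$ since $B$ is a factor). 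Now \emph{transitivity}---a hypothesis you never invoke---produces $g\in\Gamma$ with $gG\cap F\ni i$, so both tensor factors embed into $B^{I\setminus\{i\}}\rtimes\Stab(i)$. Passing to relative commutants (Vaes) gives $B^i\prec_M P$ and $B^i\prec_M Q$; since $Z((B^i)'\cap M)=\bC$, the Drimbe--Hoff--Ioana lemmas then yield $B^i\prec_M P\cap Q=\bC$, contradicting diffuseness of $B$.
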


Our proof uses crucially the fact that $M$ is a full factor. Our most general conditions to ensure fullness are a nonamenable action with the VV property (defined in Section \ref{subsect-vv}). In particular, a nonamenable action of a non-inner amenable group suffices. 

We note that nonamenable, transitive actions are of the form $\Gamma\curvearrowright\Gamma/\Lambda$ where $\Lambda$ is not co-amenable in $\Gamma.$ A faithful action of $\Gamma$ on $\Gamma/\Lambda$ corresponds to $\Lambda$ having trivial normal core. We can also see that if $\Lambda$ is weakly malnormal in $\Gamma$, then there is some $F\subset \Gamma/\Lambda$ such that $\mathrm{Stab}(F)$ is finite, and in particular, amenable. Therefore, we have the following immediate corollary to Theorem \ref{cor-main}:

\begin{cor}
\label{cor-main-gp1}
    Let $\Gamma$ be a non-inner amenable group and $\Lambda < \Gamma$ a weakly malnormal subgroup that is not co-amenable with trivial normal core. Let $(B,\tau)$ be a \twoone factor. Then $M = B^{\Gamma/\Lambda}\rtimes\Gamma$ is prime. 
\end{cor}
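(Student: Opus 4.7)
The plan is to verify the four hypotheses of Theorem \ref{cor-main} for the left coset action $\Gamma \curvearrowright \Gamma/\Lambda$. Non-inner amenability of $\Gamma$ is assumed outright. The coset action is transitive by definition, and its kernel equals the normal core $\bigcap_{g \in \Gamma} g\Lambda g^{-1}$, so the trivial-normal-core hypothesis gives faithfulness. Nonamenability of the action $\Gamma \curvearrowright \Gamma/\Lambda$ is precisely the assumed non-co-amenability of $\Lambda$ in $\Gamma$: by definition, $\Lambda$ is co-amenable iff $\ell^\infty(\Gamma/\Lambda)$ admits a $\Gamma$-invariant mean, which in turn is the definition of the action $\Gamma \curvearrowright \Gamma/\Lambda$ being amenable.

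The only nontrivial translation is extracting from weak malnormality a finite subset $F \subset \Gamma/\Lambda$ with amenable pointwise stabilizer. By definition, weak malnormality yields elements $g_1, \ldots, g_n \in \Gamma$ such that $K := \bigcap_{i=1}^n g_i \Lambda g_i^{-1}$ is finite. Taking $F = \{g_1 \Lambda, \ldots, g_n \Lambda\}$, a direct computation gives
\[
\mathrm{Stab}(F) \;=\; \bigcap_{i=1}^n \mathrm{Stab}(g_i \Lambda) \;=\; \bigcap_{i=1}^n g_i \Lambda g_i^{-1} \;=\; K,
\]
which is finite and hence amenable.

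All hypotheses of Theorem \ref{cor-main} therefore hold, and applying that theorem to $(B,\tau)$ together with the action $\Gamma \curvearrowright \Gamma/\Lambda$ immediately yields that $M = B^{\Gamma/\Lambda} \rtimes \Gamma$ is prime. The main (and only) obstacle is thus a routine group-theoretic verification; all of the deformation/rigidity, fullness, and bimodule content is packaged inside Theorem \ref{cor-main}, so no additional analytic argument is required at this stage.
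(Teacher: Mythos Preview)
Your proof is correct and matches the paper's approach exactly: the paper also derives this corollary immediately from Theorem~\ref{cor-main} by noting that transitivity is automatic for coset actions, faithfulness corresponds to trivial normal core, nonamenability of the action is precisely non-co-amenability of $\Lambda$, and weak malnormality furnishes a finite set $F\subset\Gamma/\Lambda$ with finite (hence amenable) stabilizer. No additional ideas are needed.
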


In the case that $\Lambda$ is amenable, we can generalize Theorem C of Isono and Marrakchi \cite{isono2019tensor}:

\begin{thm}
\label{cor-main-2}
    Let $\Gamma\curvearrowright I$ be a free, nonamenable action with the VV property of a countable group on a countable set such that $\mathrm{Stab}(i)$ is amenable for all $i\in I$. Let $(B,\tau)$ be a nontrivial tracial von Neumann algebra. Then $M = B^I\rtimes\Gamma$ is prime.
\end{thm}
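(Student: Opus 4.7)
The plan is to adapt the deformation/rigidity argument behind Theorem C of \cite{isono2019tensor} — which covers the case $I = \Gamma$ acting freely on itself — to this generalized Bernoulli setting. The key new difficulty, compared with Theorem \ref{cor-main}, is that $B$ need no longer be a factor, so one cannot simply pass to the factors appearing in a tensor decomposition; rather, the full structure of the action must be carried throughout. The VV property together with nonamenability of $\Gamma \ca I$ ensures that $M$ is a full factor, which provides the required rigidity.

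First, I would suppose for contradiction that $M = P_1 \bar\otimes P_2$ with both $P_i$ diffuse, and introduce Popa's $s$-malleable deformation $(\alpha_t)$ on a canonical extension $\tilde M = \tilde B^I \rtimes \Gamma$, where $\tilde B$ is a tracial algebra containing $B$ and admitting a $\pi/2$-rotation that flips $B$ with a free copy. Applying the weak bicentralization principle for full factors, as exploited in \cite{isono2019tensor, bannon2020full}, one forces uniform convergence of $\alpha_t$ on the unitary group of a corner of at least one of the tensor factors, say $P_1$. Popa's transversality then upgrades this uniform closeness to an intertwining-by-bimodules statement.

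Second, I would convert this intertwining into a concrete location inside $M$. Since the action is free but not necessarily mixing (the stabilizers $\mathrm{Stab}(i)$ may be infinite), the usual conclusion ``$P_1 \prec_M L\Gamma$'' must be replaced by a finite-$F$ localization. The natural targets are the algebras $N_F := B^F \bar\otimes \bigl(B^{I\setminus F} \rtimes \mathrm{Stab}(F)\bigr)$ for finite $F \subset I$. Since $\mathrm{Stab}(F) = \bigcap_{i\in F}\mathrm{Stab}(i)$ is a subgroup of each amenable $\mathrm{Stab}(i)$, it is amenable, and consequently $N_F$ is amenable relative to $B^I$ (hence relative to $L\Gamma$) inside $M$. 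The freeness of the action supplies the finite-index mixing data needed to localize the intertwining into some $N_F$, and therefore $P_1$ is amenable relative to $L\Gamma$ inside $M$.

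Third, since $P_2 \subset P_1' \cap M$ is diffuse, running the same analysis on $P_2$ gives that both factors are amenable relative to $L\Gamma$. Combining this with the nonamenability of $\Gamma \ca I$ and the fullness of $M$, via the standard Ozawa--Popa/Isono--Marrakchi relative-amenability dichotomy, yields the desired contradiction, proving $M$ is prime. The hard part will be the second step: without genuine mixingness, the Isono--Marrakchi intertwining argument does not apply verbatim, and one must carefully exploit freeness of the action together with amenability of singleton stabilizers to replace mixing with a finite-$F$ localization that still lands in a subalgebra tame enough for the last step to go through.
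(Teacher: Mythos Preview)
Your outline conflates the two branches of the paper's dichotomy and, as a result, misses the harder one entirely. The uniform convergence of $\alpha_t$ on a corner of one tensor factor is \emph{not} a consequence of weak bicentralization; it is the conclusion of the spectral gap argument (Lemma~\ref{spectral-gap}) under the rigidity hypothesis $Q'\cap\Tilde{M}^\omega\subset M^\omega$. When that inclusion fails---and nothing in your setup rules this out---there is no uniform convergence on either factor, and your steps~2 and~3 never get off the ground. In the paper this non-rigid case is handled by an entirely different mechanism: one extracts from $Q'\cap\Tilde{M}^\omega\not\subset M^\omega$ a weak containment $L^2(Q)\prec\bigoplus_\cF L^2(\langle M,B^{I\setminus F}\rtimes\mathrm{Norm}(\cF)\rangle)$, then uses amenability of the stabilizers (Lemma~\ref{lem-rel-amen-bimods}) to pass to $L^2(\langle M,B^I\rangle)$, writes $B^I$ as an increasing union of the weakly bicentralized algebras $Z(B)^{I\setminus F}\overline\otimes B^F$ (Proposition~\ref{prop-wk-bicent-ez}), and invokes fullness of $Q$ to upgrade weak containment to an honest intertwining $Q\prec_M Z(B)^{I\setminus F}\overline\otimes B^F$. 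The contradiction then comes from Lemma~\ref{isono-marrakchi-7.2}, not from any relative-amenability statement about both tensor factors.

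Even granting your rigid-case outline, two further steps are problematic. First, in Case~1 the paper does not stop at $P\prec_M B^I\rtimes\mathrm{Stab}(F)$; Theorems~\ref{thm-exists-StabF} and~\ref{thm-all-StabF} push this down to $P\prec_M L(\mathrm{Stab}(F))$ for \emph{every} finite $F$, which is what actually contradicts amenability of stabilizers plus fullness of $P$. Second, your parenthetical ``$N_F$ is amenable relative to $B^I$ (hence relative to $L\Gamma$)'' is not justified: $N_F\lessdot_M B^I$ follows from amenability of $\mathrm{Stab}(F)$, but there is no general passage from this to $N_F\lessdot_M L\Gamma$, and indeed $B^I$ need not be amenable relative to $L\Gamma$. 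Consequently your proposed endgame---deriving a contradiction from both $P_1$ and $P_2$ being amenable relative to $L\Gamma$---rests on an unproved and likely false implication, and in any event requires the rigid case to hold for \emph{both} labelings of $P,Q$, which again you have not arranged.
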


Theorem \ref{cor-main} and Theorem \ref{cor-main-2} follow from our more general but somewhat technical theorem:

\begin{thm}
\label{thm-main}
Let $\Gamma\curvearrowright I$ be a faithful, nonamenable action with the VV property of a countable group on a countable set. Let $(B,\tau)$ be a diffuse tracial von Neumann algebra. Write $M = B^I\rtimes\Gamma$ and suppose $M = P\overline{\otimes} Q$ is a tensor decomposition. Then at least one of the following occurs:

\begin{enumerate}
    \item[(a)] $P\prec_M L(\mathrm{Stab}(F))$ for all finite sets $F\subset I$.
    \item[(b)] $Q \prec_M (Z(B)^F\overline{\otimes} B^{I\setminus F})\rtimes\mathrm{Stab}(F)$ for some finite set $F\subset I.$ \qedhere
\end{enumerate} 
\end{thm}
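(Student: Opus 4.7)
The plan is to combine Popa's $s$-malleable deformation for generalized Bernoulli actions with the techniques of Isono--Marrakchi exploiting fullness of $M$. Fullness of $M$, guaranteed by nonamenability of $\Gamma\curvearrowright I$ together with the VV property (as asserted in the paragraph following Theorem~\ref{cor-main}), rules out pathological central sequences in either tensor factor.

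First I would set up the deformation. Embed $M$ into $\tilde M = (B * L\mathbb{Z})^I \rtimes \Gamma$ and consider Popa's classical $s$-malleable deformation $(\alpha_t, \beta)$ on $\tilde M$. This deformation is $L\Gamma$-bimodular, trace-preserving, satisfies the transversality property, and its ``near-fixed points'' are concentrated on finitely many coordinates in $I$.

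Second, I would run the standard intertwining/spectral-gap dichotomy of Popa applied to $P\subset M$: either (i) $\alpha_t\to\mathrm{id}$ uniformly on the unit ball of $P$ as $t\to 0$, or (ii) some nonzero corner of $P$ intertwines into a finitary subalgebra of $\tilde M$ lying below the deformation. In case (ii), combining $s$-malleability with a weak-mixing argument for the Bernoulli deformation on $B^{I\setminus F}$ relative to $B^F$ upgrades this to $P\prec_M L(\Stab(F))$ for some finite $F\subset I$. Using the tensor decomposition $M=P\overline{\otimes}Q$, diffuseness of $Q$, faithfulness of the action, and a cofinality argument that leverages the VV/non-inner-amenability assumptions, one then promotes this to conclusion (a): $P\prec_M L(\Stab(F))$ for \emph{every} finite $F\subset I$.

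Third, in the spectral-gap case (i), uniform convergence of $\alpha_t$ on $(P)_1$ together with transversality forces $\beta\circ\alpha_t$ to also converge uniformly on $(P)_1$. Since $P$ and $Q$ commute inside $M$, an averaging argument along the unit ball of $Q$, in the style of Ioana--Peterson--Popa and Isono--Marrakchi, transfers deformation information to $Q$; combined with fullness of $M$, this forces $Q$ to be close to a deformation-invariant subalgebra. A careful weak-bicentralizer analysis then localizes this target to $(Z(B)^F\overline{\otimes} B^{I\setminus F})\rtimes\Stab(F)$ for some finite $F$, yielding conclusion (b).

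The main obstacle I anticipate is this last step: pinning down the exact target subalgebra for $Q$, and in particular showing that the $Z(B)^F$ component emerges correctly. In the classical Bernoulli setting, where $B$ is a factor and $Z(B)=\mathbb C$, the target collapses essentially to $L\Gamma$ and the analysis is much cleaner. With $B$ only assumed diffuse and stabilizers potentially infinite, one must combine weak-mixing of the $\Stab(F)$-action on $I\setminus F$ with a weak-bicentralizer analysis in the presence of the center of $B$ to determine precisely which coordinates of $Q$ on $F$ must land in $Z(B)$ rather than in all of $B$. This is exactly where fullness of $M$, the VV property, and the weak bicentralization machinery developed by Isono--Marrakchi play their most critical role.
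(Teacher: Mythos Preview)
Your dichotomy is inverted, and as stated it is not a dichotomy at all. The two alternatives ``(i) $\alpha_t\to\mathrm{id}$ uniformly on $(P)_1$'' and ``(ii) a corner of $P$ intertwines into a finitary subalgebra'' are not complementary: in Popa's framework, uniform convergence on $(P)_1$ is precisely what \emph{produces} the intertwining $P\prec_M B^F\rtimes\mathrm{Stab}(F)$, not its negation. In the paper the split is on whether $Q'\cap\tilde M^\omega\subset M^\omega$. When this inclusion holds, the spectral-gap argument gives uniform convergence of $\alpha_t$ on $(P)_1$, and from there one obtains $P\prec_M B^F\rtimes\mathrm{Stab}(F)$, then $P\prec_M L(\mathrm{Stab}(F))$ for some $F$, and finally for all $F$ --- this is conclusion~(a). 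The upgrade to ``all $F$'' uses only that $P$ is a tensor factor and direct crossed-product computations; neither the VV property nor non-inner-amenability enters here.

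Conclusion~(b) arises in the opposite branch, when $Q'\cap\tilde M^\omega\not\subset M^\omega$. That failure hands you, essentially for free, a nonzero $Q$-central vector in $L^2(\tilde M)\ominus L^2(M)$, i.e.\ the weak containment $L^2(Q)\prec L^2(\tilde M)\ominus L^2(M)$ as $Q$-$Q$-bimodules. One then decomposes $L^2(\tilde M)\ominus L^2(M)$ as $\bigoplus_{\cF} L^2(\langle M, A_\cF\rangle)$, enlarges each $A_\cF$ to $N_\cF=(Z(B)^F\overline\otimes B^{I\setminus F})\rtimes\mathrm{Norm}(\cF)$, proves these $N_\cF$ are weakly bicentralized in $M$, and uses fullness of $Q$ (here is where VV and nonamenability are actually used) to turn weak containment into genuine containment and extract $Q\prec_M N_\cF$. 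Your third paragraph tries to reach (b) from uniform convergence on $(P)_1$ via ``averaging along $Q$,'' but there is no mechanism that transfers uniform convergence on $(P)_1$ into an intertwining statement for $Q$; that route does not work.
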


To date, the only examples the author knows of where condition (a) occurs with a tensor factor in a generalized Bernoulli crossed product \twoone factor is when $P$ is hyperfinite, such as in Example \ref{eg-supernonVV} and Example \ref{eg-non-VV}. It would be interesting to know whether such examples exist where $P$ is not hyperfinite (or moreover when $P$ is full). In the event condition (a) cannot occur, then condition (b) must be true for both $P$ and $Q$ simultaneously. This situation can arise, for example, when $\Gamma\curvearrowright I$ decomposes as in Example \ref{eg-two-orbits}. 

Section \ref{sect-prelim} will define the relevant terms needed for the proof. Section \ref{sect-examples} will highlight some pertinent examples of generalized Bernoulli crossed products. Section \ref{sect-rigidfactor} will prove the main result under the added assumption that one of the tensor factors is rigidly embedded in the crossed product. Section \ref{sect-nonrigid-factor} will contain the rest of proof of the main result. Section \ref{sect-corollaries} will give applications.

\subsection*{Declaration of Interests} This work was supported in part by the National Science Foundation [FRG \#1854074].

\subsection*{Acknowledgements} I would like to thank my advisor, Adrian Ioana, for suggesting this project and for the innumerable hours of discussions, guidance, and mentorship. I would also like to thank Hui Tan for her helpful comments and insights. An anonymous referee also provided several useful comments.

\section{Preliminaries}
\label{sect-prelim}
\subsection{Averaging, conditional expectations, and commuting squares}
\label{subsect-cond-exp}
Tracial von Neumann algebras $(B,\tau)$ are \textit{finite}, i.e., every isometry in $B$ is a unitary. One of our most powerful tools when working with finite objects is averaging, and the setting of finite von Neumann algebras is no different. Our first lemma is technical but guarantees that certain 2-norm limits remain in $B$ instead of living in $L^2(B)\setminus B.$ 

\begin{lem}
\label{lem-technical-avging}
    Let $(B,\tau)$ be a tracial von Neumann algebra and let $K\subset B$ be a $\|\cdot\|_\infty$-bounded convex set. Then the $\|\cdot\|_2$-closure of $K$ in $B$ has a unique element of minimal 2-norm.
\end{lem}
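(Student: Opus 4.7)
The plan is to reduce to the classical Hilbert-space fact that every nonempty closed convex subset of a Hilbert space admits a unique element of minimum norm. The ambient Hilbert space here is $L^2(B,\tau)$, into which $B$ embeds densely via the inequality $\|b\|_2 \le \|b\|_\infty$. The ``in $B$'' qualifier in the statement is the subtle point, since a priori a $\|\cdot\|_2$-limit of elements of $B$ lives only in $L^2(B)$, so the work is in arguing that $\|\cdot\|_\infty$-boundedness of $K$ forces its $\|\cdot\|_2$-closure to stay inside $B$.

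The first step is to verify that for each $C>0$ the operator-norm ball $B_C := \{x\in B : \|x\|_\infty\le C\}$ is $\|\cdot\|_2$-closed as a subset of $L^2(B)$. This is standard: $B_C$ is weakly compact in $B$, so any $\|\cdot\|_2$-Cauchy sequence in $B_C$ has a weak-limit $y\in B_C$ whose value is forced to agree with the $L^2$-limit. Assuming $K\subseteq B_C$, this gives that the $\|\cdot\|_2$-closure of $K$ computed in $L^2(B)$ is contained in $B_C\subset B$, so it coincides with the $\|\cdot\|_2$-closure of $K$ in $B$; call this common set $\overline{K}$.

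Now $\overline{K}$ is a nonempty closed convex subset of the Hilbert space $L^2(B)$, and the standard parallelogram-identity argument goes through. For existence, take a sequence $(x_n)\subset \overline{K}$ with $\|x_n\|_2\to \alpha := \inf_{\overline{K}}\|\cdot\|_2$; convexity gives $(x_n+x_m)/2\in\overline{K}$, and
\[
\left\|\tfrac{x_n-x_m}{2}\right\|_2^2 = \tfrac{1}{2}\bigl(\|x_n\|_2^2 + \|x_m\|_2^2\bigr) - \left\|\tfrac{x_n+x_m}{2}\right\|_2^2 \le \tfrac{1}{2}(\|x_n\|_2^2+\|x_m\|_2^2) - \alpha^2 \xrightarrow[n,m\to\infty]{} 0,
\]
so $(x_n)$ is $\|\cdot\|_2$-Cauchy and its limit, which lies in $\overline{K}\subset B$ by the previous step, realises the infimum. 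For uniqueness, if $x,y\in\overline{K}$ both have $\|\cdot\|_2 = \alpha$, the same identity yields $\|(x-y)/2\|_2^2 \le \alpha^2 - \|(x+y)/2\|_2^2 \le 0$, hence $x=y$.

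The only nontrivial ingredient is the first step, i.e.\ the lower semicontinuity of $\|\cdot\|_\infty$ with respect to $\|\cdot\|_2$; after that, the proof is purely a Hilbert-space exercise.
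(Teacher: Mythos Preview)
Your proof is correct and follows essentially the same two-step approach as the paper: first argue that operator-norm boundedness forces the $\|\cdot\|_2$-closure of $K$ to remain inside $B$ (the paper cites Proposition~2.6.4 of \cite{anantharaman2017introduction} for the $\|\cdot\|_2$-completeness of the unit ball, while you sketch the weak-compactness argument), and then invoke the Hilbert-space fact that a nonempty closed convex set has a unique element of minimal norm (you spell out the parallelogram-law argument where the paper simply asserts the conclusion).
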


\begin{proof}
    The $\|\cdot\|_2$-closure of $K$ is contained in $B$ since $K$ is $\|\cdot\|_\infty$-bounded and the unit ball in $B$ is $\|\cdot\|_2$-complete (see Proposition 2.6.4 of \cite{anantharaman2017introduction}). Since $K$ is a compact convex subset of the Hilbert space $L^2(B),$ it has a unique minimal element $v.$
\end{proof}

Furthermore, if $A\subset (B,\tau)$ is an inclusion of tracial von Neumann algebras, i.e., such that $\tau_A = (\tau_B)|_A$, then there is a unique trace-preserving \textit{conditional expectation} $E_A: B\to A$ (see Theorem 9.1.2 of \cite{anantharaman2017introduction}). In many cases, there is a way to use averaging to compute this conditional expectation.

\begin{lem}
\label{lem-expect-onto-commutant}
    Let $A\subset (B,\tau)$ be an inclusion of tracial von Neumann algebras. Fix $x\in B$ and set $$K := \overline{\mathrm{conv}\{uxu^*: u\in \cU(A)\}}^{\|\cdot\|_2}.$$ Then every $z\in K$ such that $uzu^*=z$ for all $u\in\cU(A)$ satisfies $z = E_{A'\cap B}(x)$. In particular, if $y$ is the minimal 2-norm element of $K$ then $E_{A'\cap B}(x) = y.$
\end{lem}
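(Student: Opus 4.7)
The plan is to show that $E_{A'\cap B}$ is constant on $K$ with value $E_{A'\cap B}(x)$, and then use the $\cU(A)$-invariance of $z$ to conclude $E_{A'\cap B}(z) = z$.

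First I would observe that $K$ is $\|\cdot\|_\infty$-bounded by $\|x\|_\infty$, so by Lemma~\ref{lem-technical-avging} every $z \in K$ actually lies in $B$ and the minimal 2-norm element exists uniquely. Next, I would check that $E_{A'\cap B}(uxu^*) = E_{A'\cap B}(x)$ for every $u \in \cU(A)$. This follows from the tracial characterization of the conditional expectation: for any $y \in A'\cap B$, since $uy = yu$, we have $\tau(uxu^* y) = \tau(x u^* y u) = \tau(xy)$. By linearity this extends to all convex combinations $\sum \lambda_i u_i x u_i^*$, and by the $\|\cdot\|_2$-contractivity of $E_{A'\cap B}$ (it is the orthogonal projection onto $L^2(A'\cap B)$), it extends to the entire $\|\cdot\|_2$-closure $K$.

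Now suppose $z \in K$ satisfies $uzu^* = z$ for all $u \in \cU(A)$; since $A$ is the weak-$*$ closed linear span of its unitaries, this forces $z \in A' \cap B$, and so $E_{A'\cap B}(z) = z$. Combined with the previous paragraph, $z = E_{A'\cap B}(z) = E_{A'\cap B}(x)$, which is the main assertion.

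For the ``in particular'' clause, I would verify that the minimal 2-norm element $y$ of $K$ is $\cU(A)$-invariant. Conjugation by $u \in \cU(A)$ permutes the generating set $\{vxv^* : v \in \cU(A)\}$, preserves convex combinations, and is a $\|\cdot\|_2$-isometry, so $uKu^* = K$. In particular $uyu^* \in K$ and $\|uyu^*\|_2 = \|y\|_2$, so the uniqueness of the minimizer provided by Lemma~\ref{lem-technical-avging} forces $uyu^* = y$. Applying the first part then gives $y = E_{A'\cap B}(x)$. The only delicate step is the $\|\cdot\|_2$-continuity passage from the convex hull to its closure, which is routine once one notes that $E_{A'\cap B}$ is an orthogonal projection in $L^2(B)$.
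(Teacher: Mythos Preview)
Your proof is correct and follows essentially the same approach as the paper's: both show that $E_{A'\cap B}$ is constant on $K$ (using $E_{A'\cap B}(uxu^*)=E_{A'\cap B}(x)$, linearity, and $\|\cdot\|_2$-continuity), deduce that any $\cU(A)$-fixed $z\in K$ lies in $A'\cap B$ and hence equals $E_{A'\cap B}(x)$, and then observe that the unique $\|\cdot\|_2$-minimizer is $\cU(A)$-fixed by isometry and uniqueness. Your write-up is slightly more detailed (e.g., the tracial justification of $E_{A'\cap B}(uxu^*)=E_{A'\cap B}(x)$ and the explicit $uKu^*=K$), but there is no substantive difference.
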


\begin{proof}
     Let $z\in K$ such that $uzu^*=z$ for all $u\in\cU(A)$. Since every element of $A$ is a linear combination of unitaries, $z\in A'\cap B.$ We note that for all $u\in A,$ $E_{A'\cap B}(uxu^*) = E_{A'\cap B}(x)$ which, via convex combinations and 2-norm limits, implies that $E_{A'\cap B}(x) = E_{A'\cap B}(z) = z.$

    By Lemma \ref{lem-technical-avging}, let $y\in B$ be the minimal 2-norm element of $K$. Then for all unitaries $u\in \cU(A)$, $uyu^* \in K$ and $\|uyu^*\|_2 = \|y\|_2$ so $uyu^* = y$. Hence $y\in A'\cap B.$ Therefore, $y = E_{A'\cap B}(x)$. 
\end{proof}

Let $A,B\subset (M,\tau)$ be subalgebras of a tracial von Neumann algebra $M$. Sometimes, the trace-preserving conditional expectations $E_A$ and $E_B$ commute. In this case, we say that $A$ and $B$ form a commuting square and it follows that $E_A\circ E_B = E_{A\cap B}$. We record two simple lemmas related to commuting squares:

\begin{lem}
\label{lem-commutant-is-commuting-square}
    If $A\subset B \subset (M,\tau)$, then $A'\cap M$ and $B$ are subalgebras that form a commuting square.
\end{lem}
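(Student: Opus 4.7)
The plan is to show directly that for every $x\in M$, the identity $E_B(E_{A'\cap M}(x)) = E_{A'\cap M}(E_B(x))$ holds, by identifying both sides as the same distinguished element of a common convex set, using Lemma \ref{lem-expect-onto-commutant} as the main tool. The fact that $A\subseteq B$ (rather than just $A,B\subseteq M$) is what makes this work, since it forces $E_B$ to be $\mathcal{U}(A)$-equivariant: for $u\in\mathcal{U}(A)\subseteq B$ and $z\in M$, $E_B(uzu^*)=uE_B(z)u^*$.

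Concretely, set
\[ K := \overline{\mathrm{conv}\{uxu^*: u\in\mathcal{U}(A)\}}^{\|\cdot\|_2}, \qquad L := \overline{\mathrm{conv}\{uE_B(x)u^*: u\in\mathcal{U}(A)\}}^{\|\cdot\|_2}, \]
and let $y := E_{A'\cap M}(x)$, which by Lemma \ref{lem-expect-onto-commutant} is the minimal $2$-norm element of $K$. Since $E_B$ is a $\|\cdot\|_2$-contraction (being the orthogonal projection of $L^2(M)$ onto $L^2(B)$) and is $\mathcal{U}(A)$-equivariant by the above, it maps the generating convex subset of $K$ onto the one generating $L$, and hence $E_B(K)\subseteq L$. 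In particular $E_B(y)\in L$. Moreover, since $y\in A'\cap M$ is $\mathcal{U}(A)$-invariant under conjugation, so is $E_B(y)$, again because $A\subseteq B$.

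Thus $E_B(y)$ is an element of $L$ that is fixed by conjugation by $\mathcal{U}(A)$, so applying Lemma \ref{lem-expect-onto-commutant} to the element $E_B(x)\in M$ gives $E_B(y) = E_{A'\cap M}(E_B(x))$, which is the desired commuting square identity $E_B\circ E_{A'\cap M} = E_{A'\cap M}\circ E_B$.

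I don't expect any real obstacle; the only slightly subtle step is verifying $E_B(K)\subseteq L$, which reduces to the routine fact that $E_B$ is $\|\cdot\|_2$-continuous and commutes with conjugation by $\mathcal{U}(A)$ thanks to $A\subseteq B$. Note that the argument fails without the inclusion $A\subseteq B$, which correctly reflects that arbitrary $A,B\subseteq M$ need not form a commuting square with $A'\cap M$.
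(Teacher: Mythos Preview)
Your proof is correct and follows essentially the same approach as the paper's: both use the $\mathcal{U}(A)$-equivariance of $E_B$ (from $A\subset B$) together with its $\|\cdot\|_2$-continuity to push the convex hull $K$ into $L$, and then invoke Lemma~\ref{lem-expect-onto-commutant} to identify the resulting $\mathcal{U}(A)$-fixed element. Your version is more explicit about why $E_B(y)$ is the distinguished element of $L$ (via the first assertion of Lemma~\ref{lem-expect-onto-commutant} rather than just minimality), which is a welcome clarification of a step the paper leaves to the reader.
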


\begin{proof}
    Write $C = A'\cap M.$ Then for $x\in M,$ $E_C(x)$ is equal to the element of minimal 2-norm in the closed convex hull of $\{uxu^*: u\in \cU(A)\}$. Note that $E_B(uxu^*) = uE_B(x)u^*$, and since $E_B$ is 2-norm continuous and linear, it follows that $E_C(E_B(x)) = E_B(E_C(x)).$
\end{proof}

\begin{lem}
\label{lem-comm-sq-orthog}
    Let $A,B\subset (M,\tau)$ form a commuting square. Assume further that $A \not\subset B$. Then there exists $0\neq a\in A$ such that $E_B(a) = 0.$ 
\end{lem}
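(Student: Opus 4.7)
The plan is to exploit the commuting square identity $E_A \circ E_B = E_B \circ E_A = E_{A \cap B}$ to produce the required element $a$ by subtracting off the component of an arbitrary witness of non-containment that lies in $B$.

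First, since $A \not\subset B$, pick any $a_0 \in A$ with $a_0 \notin B$. The key observation is that for this $a_0$, the conditional expectation $E_B(a_0)$ actually lies in $A \cap B$: using the commuting square, $E_B(a_0) = E_B(E_A(a_0)) = E_A(E_B(a_0))$, so $E_B(a_0) \in A$ (and of course $E_B(a_0) \in B$). In particular $E_B(a_0) \in A$, so we may define
\[
a := a_0 - E_B(a_0) \in A.
\]

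Next I would verify the two required properties. Applying $E_B$ (and using $E_B \circ E_B = E_B$) gives $E_B(a) = E_B(a_0) - E_B(a_0) = 0$. For $a \neq 0$, if instead $a = 0$ then $a_0 = E_B(a_0) \in B$, contradicting the choice of $a_0 \notin B$.

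There is really no obstacle here — the entire argument is just the observation that a commuting square lets you orthogonally project out the $B$-part of an $A$-element while staying inside $A$. The only thing to be slightly careful about is noting explicitly that $E_B(a_0) \in A$ (so that $a \in A$), which is exactly where the commuting square hypothesis $E_A \circ E_B = E_B \circ E_A$ is used; without it, subtracting $E_B(a_0)$ might take us out of $A$.
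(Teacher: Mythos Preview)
Your proof is correct and essentially identical to the paper's own argument: both pick $a_0 \in A \setminus B$, use the commuting square to see $E_B(a_0) \in A$, and set $a = a_0 - E_B(a_0)$.
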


\begin{proof}
    Let $y\in A\setminus B.$ Then $E_B(y) = E_B(E_A(y)) = E_A(E_B(y)) \in A.$ Hence $x = y - E_B(y) \in A$, and clearly $E_B(x) = 0.$ $x\neq 0$ since $y\not\in B.$
\end{proof} 

\subsection{Group actions and crossed products}
\label{subsect-crossed-prod}

Throughout, $\Gamma$ is a countable discrete group and $I$ is a countable set. We say that an action $\Gamma\curvearrowright I$ is \textit{faithful} if for any $g\in\Gamma\setminus\{e\},$ there exists $i\in I$ such that $gi\neq i.$ We say that an action $\Gamma\curvearrowright I$ is \textit{free} if for any $g\in\Gamma\setminus\{e\},$ there are infinitely many $i\in I$ such that $gi\neq i.$ We say an action is \textit{nonamenable} if there is no $\Gamma$-invariant mean on $I.$ In particular, this forces $\Gamma\curvearrowright I$ to have infinite orbits. 

For a finite subset $F\subset I,$ we define $\mathrm{Stab}(F) = \{g\in\Gamma: gi=i \text{ for all } i\in F\}$. If $F = \{i\}$ is a singleton, we abuse notation and write simply $\mathrm{Stab}(\{i\}) = \mathrm{Stab}(i)$. We similarly define $\mathrm{Norm}(F) = \{g\in\Gamma: gF= F\}$. We note that $\mathrm{Stab}(F)$ is always a finite index subgroup of $\mathrm{Norm}(F)$.

If $\Gamma\curvearrowright I$ is an action and $B$ is a tracial von Neumann algebra, we define $B^I = \otimes_{i\in I} B$. We note that the infinite tensor product $B^I$ is the 2-norm closed span of the \textit{pure tensors}, i.e., the elements $\otimes_{i\in I}x_i$ where $x_i \in B^i$ for all $i\in I$ and only finitely many of the $x_i$ are not equal to 1. The group $\Gamma$ acts on $B^I$ in the following, trace-preserving way on pure tensors: $g \cdot (\otimes_{i\in I}x_i) = \otimes_{i\in I}x_{g^{-1}i}.$ We denote this action by $\sigma_g,$ and it extends to all of $B^I$ via linearity and $\|\cdot\|_2$-continuity. We may then form the crossed product von Neumann algebra $M = B^I\rtimes\Gamma.$ Note that $M$ has a natural trace $\tau$ that restricts to the traces on $B^I$ and on $L\Gamma$.

We record three lemmas we will need later related to tensor automorphisms, i.e., the maps $\sigma_g\in \mathrm{Aut}(B^I)$. The first one says that when $B$ is diffuse, every nontrivial tensor automorphism is \textit{properly outer}, that is, that if there is $x\in B^I$ such that $x\sigma_g(b) = bx$ for all $b\in B^I,$ then $x=0.$

\begin{lem}
\label{lem-tensor-outer}

Let $A$ be a diffuse tracial von Neumann algebra. Let $I$ be a countable set. Denote by $A^I$ the (possibly infinite) tensor power $\bigotimes_{i\in I} A$. Let $\rho$ be a permutation of the set $I$. There is a corresponding automorphism $\sigma$ of $A^I$ such that $\sigma(\otimes_{i\in I}a_i) = \otimes_{i\in I}a_{\rho^{-1}(i)}$ for every pure tensor $\otimes_{i\in I}a_i\in A^I.$ If $\rho$ is a nontrivial permutation of $I,$ then $\sigma$ is a properly outer automorphism of $A^I$. 
\end{lem}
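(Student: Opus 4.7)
The plan is to pick a slot $i_0$ at which $\rho$ acts nontrivially, use the proper-outerness relation $x\sigma(b)=bx$ on simple tensors supported at $i_0$ to derive a bimodule intertwining between the two slots $i_0$ and $\rho(i_0)$, and then exploit a Haar unitary in $A$ to drive $x$ weakly to zero in $L^2(A^I)$.

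First, since $\rho$ is nontrivial, I pick $i_0\in I$ with $j_0:=\rho(i_0)\neq i_0$. For $a\in A$ and $i\in I$, write $\iota_i(a)\in A^I$ for the pure tensor with $a$ in slot $i$ and $1$ elsewhere; then the defining formula for $\sigma$ gives $\sigma(\iota_{i_0}(a))=\iota_{j_0}(a)$. Substituting $b=\iota_{i_0}(a)$ into $x\sigma(b)=bx$ yields $\iota_{i_0}(a)\,x=x\,\iota_{j_0}(a)$ for every $a\in A$. Because $A$ is diffuse, I pick a Haar unitary $u\in A$ (so that $\tau(u^n)=0$ for every $n\neq 0$); applying the relation with $a=u^n$ and rearranging gives
\[ x=\iota_{i_0}(u^{-n})\,x\,\iota_{j_0}(u^n)\qquad \forall\,n\in\mathbb{Z}. \]

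Next, I identify $L^2(A^I)\cong L^2(A)^{(i_0)}\otimes L^2(A)^{(j_0)}\otimes L^2(A^{I\setminus\{i_0,j_0\}})$ and fix a simple tensor $\zeta=\zeta_1\otimes\zeta_2\otimes\eta$ in this Hilbert space. Passing to $L^2$-adjoints (left multiplication by $\iota_{i_0}(u^{-n})$ has adjoint $\iota_{i_0}(u^n)$, and similarly for the right) rewrites the previous identity as
\[ \langle x,\zeta_1\otimes\zeta_2\otimes\eta\rangle=\langle x,(u^n\zeta_1)\otimes(\zeta_2 u^{-n})\otimes\eta\rangle,\qquad \forall\,n\in\mathbb{Z}. \]
Since $u$ is a Haar unitary, $u^n\to 0$ weakly in $L^2(A)$ as $n\to\infty$, so $u^n\zeta_1\to 0$ and $\zeta_2 u^{-n}\to 0$ weakly, and therefore $(u^n\zeta_1)\otimes(\zeta_2 u^{-n})\to 0$ weakly in $L^2(A)\otimes L^2(A)$. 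Expanding $x=\sum_m X_m\otimes\chi_m$ along an ONB $\{\chi_m\}$ of $L^2(A^{I\setminus\{i_0,j_0\}})$ with $X_m\in L^2(A)\otimes L^2(A)$, and using the uniform Cauchy--Schwarz bound $\sum_m\|X_m\|\,|\langle\chi_m,\eta\rangle|\leq\|x\|_2\,\|\eta\|_2$ to dominate the resulting series, dominated convergence forces the right-hand side to $0$ as $n\to\infty$. Hence $\langle x,\zeta\rangle=0$ for every such simple tensor $\zeta$, and since these are total in $L^2(A^I)$, I conclude $x=0$.

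The main technical point is the weak-to-zero averaging via a Haar unitary together with the dominated-convergence interchange; this is the same mechanism that rules out nonzero bimodule-central vectors in the coarse $A$-$A$-bimodule $L^2(A)\otimes L^2(A)$ when $A$ is diffuse. The tensor factors outside $\{i_0,j_0\}$ play no role beyond being absorbed into $\eta$.
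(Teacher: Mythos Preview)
Your proof is correct and follows essentially the same strategy as the paper's: pick a slot $i_0$ moved by $\rho$, place a weakly null sequence of unitaries from $A$ at that slot (you use powers of a Haar unitary, the paper an arbitrary sequence $u_n\to 0$ weakly), and use the resulting weak convergence, checked against pure tensors, to force the intertwiner to vanish. As a minor simplification, your dominated-convergence step is unnecessary: since the vectors $(u^n\zeta_1)\otimes(\zeta_2 u^{-n})\otimes\eta$ form a norm-bounded sequence, their weak convergence to $0$ follows directly from testing against simple tensors, which immediately gives $\langle x,\zeta\rangle=0$.
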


\begin{proof}
    Take $(u_n)_n$ to be a sequence of unitaries in $A$ weakly converging to 0. Pick $i\in I$ such that $\rho(i) = j\neq i.$ Then set $y_n^i = u_n,$ $y_n^k = 1$ for $k\neq i,$ and $y_n = \bigotimes_{i\in I} y_n^i$ (a pure tensor with $u_n$ in the $i$th coordinate and 1s elsewhere). 
    
    We claim that for all $a\in A^I,$ $\inp{ay_n}{\sigma(y_n)a} \to 0.$ If the claim holds, then clearly we cannot have $ay_n = \sigma(y_n)a$ for any nonzero $a\in A^I$ and so the action $\sigma$ would be properly outer. 

    To prove the claim, we fix $x = \bigotimes_{k\in I}x_k, z=\bigotimes_{k\in I}z_k$ to be pure tensors, where only finitely many of the coordinates are not equal to 1. Then $xy_n = x_iu_n \bigotimes_{k\neq i}x_k$ and $\sigma(y_n)z = u_nz_j \bigotimes_{k\neq j}z_k$. Hence $$\tau(xy_nz^*\sigma(y_n)^*) = \tau(x_iu_nz_i^*)\tau(x_jz_j^*u_n^*)\prod_{k\neq i,j}\tau(x_kz_k^*) \to 0$$ as $n\to \infty$ since the $u_n$ go weakly to 0. Taking linear combinations and 2-norm limits of pure tensors proves the claim for all $a\in A^I.$
\end{proof}

We obtain a similar result if $A$ is merely assumed nontrivial but the action is free:

\begin{lem}
\label{lem-tensor-outer-free}

Let $A$ be a nontrivial tracial von Neumann algebra. Let $I$ be a countably infinite set. Denote by $A^I$ the infinite tensor power $\bigotimes_{i\in I} A$. Let $\rho$ and $\sigma$ be as in the previous lemma. If $\rho$ is a permutation such that $\rho(i)\neq i$ for infinitely many $i\in I,$ then $\sigma$ is a properly outer automorphism of $A^I$. 
\end{lem}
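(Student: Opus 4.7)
The strategy is to follow the same template as the proof of Lemma \ref{lem-tensor-outer}: produce a sequence of unitaries $y_n \in A^I$ with $\inp{a y_n}{\sigma(y_n) a} \to 0$ for every $a \in A^I$, and then conclude as in that proof that no nonzero $a$ can satisfy $ab = \sigma(b) a$ for all $b \in A^I$ (plugging in $b = y_n$ would give $\|a\|_2^2 = \|a y_n\|_2^2 = \inp{a y_n}{\sigma(y_n) a} \to 0$, a contradiction since $y_n$ is a unitary). The essential change from the diffuse case is that $A$ may now have no weakly null sequence of unitaries, so I cannot place such a sequence in a single coordinate; instead I will spread $y_n$ across infinitely many $\rho$-moved coordinates and exploit geometric decay of tensor-power traces.

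Two preliminary constructions are needed. First, using the hypothesis that $\{i \in I : \rho(i) \neq i\}$ is infinite, I will extract by greedy induction an infinite subset $J = \{j_1, j_2, \ldots\} \subseteq \{i : \rho(i) \neq i\}$ with $\rho(J) \cap J = \emptyset$: having chosen $j_1, \ldots, j_{n-1}$, the set $\{j_1,\ldots,j_{n-1}\} \cup \rho(\{j_1,\ldots,j_{n-1}\}) \cup \rho^{-1}(\{j_1,\ldots,j_{n-1}\})$ is finite, so an admissible $j_n$ exists. Second, since $A \neq \bC 1$ is a tracial von Neumann algebra with faithful trace, it contains a projection $p$ with $0 < \tau(p) < 1$, so $v := 1 - 2p$ is a self-adjoint unitary in $A$ with $|\tau(v)| = |1 - 2\tau(p)| < 1$. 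I will then let $y_n \in A^I$ be the pure tensor with $v$ in positions $j_1, \ldots, j_n$ and $1$ elsewhere; each $y_n$ is a unitary in $A^I$.

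The central claim is $\inp{a y_n}{\sigma(y_n) b} \to 0$ for all $a, b \in A^I$. By sesquilinearity and the uniform bound $|\inp{a y_n}{\sigma(y_n) b}| \leq \|a\|_2 \|b\|_2$, it suffices to verify this when $a = \bigotimes_k a_k$ and $b = \bigotimes_k b_k$ are pure tensors supported on a common finite set $F \subseteq I$. For such $a, b$ the inner product $\tau(b^* \sigma(y_n)^* a y_n)$ factors coordinatewise. Because $J \cap \rho(J) = \emptyset$, at each position in $\{j_1,\ldots,j_n\} \setminus F$ only $y_n$ contributes a nontrivial entry, yielding factor $\tau(v)$; at each position in $\rho(\{j_1,\ldots,j_n\}) \setminus F$ only $\sigma(y_n)$ contributes, yielding factor $\overline{\tau(v)}$; and the at most $3|F|$ remaining nontrivial coordinates contribute factors bounded independently of $n$ by a constant $C_{a,b}$ depending only on $\|a_k\|_\infty, \|b_k\|_\infty$ for $k \in F$. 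This yields the bound $|\tau(v)|^{2(n - |F|)} C_{a,b}$, which tends to $0$ since $|\tau(v)| < 1$, proving the claim and hence the lemma. The main obstacle is simply finding the right replacement for a weakly null sequence of unitaries when $A$ has atoms; the geometric decay $|\tau(v)|^n \to 0$ combined with the disjoint-support condition $\rho(J) \cap J = \emptyset$ is what forces the factors from $y_n$ and $\sigma(y_n)$ to compound their decay rather than cancel.
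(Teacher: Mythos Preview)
Your proof is correct, but it takes a more elaborate route than the paper's. The paper does not need the greedy construction of $J$ with $\rho(J)\cap J=\emptyset$, nor geometric decay. It simply picks any nonzero $u\in A\ominus\bC 1$ (so $\tau(u)=0$), chooses distinct points $i_1,i_2,\ldots$ with $\rho(i_n)\neq i_n$, and lets $y_n$ carry $u$ in the \emph{single} coordinate $i_n$. For pure tensors $x,z$ with finite support, once $i_n$ lies outside that support the factor at position $i_n$ is exactly $\tau(u)=0$, so $\tau(xy_nz^*\sigma(y_n)^*)=0$ for all large $n$; no disjointness of $J$ and $\rho(J)$ and no compounding of factors is required. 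What your approach buys in exchange for the extra work is that your $y_n$ is a genuine unitary, so the closing implication $\inp{ay_n}{\sigma(y_n)a}\to 0\Rightarrow a=0$ is immediate from $\|ay_n\|_2=\|a\|_2$; in the paper's version $u$ need not be a unitary (indeed a trace-zero unitary may fail to exist in $A$, e.g.\ $A=\bC^2$ with weights $(3/4,1/4)$), so that final step, stated there as ``clear,'' tacitly relies on choosing $u$ invertible---which is always possible, e.g.\ $u=p-\tau(p)1$ for a nontrivial projection $p$. A minor point: your ``at most $3|F|$ remaining nontrivial coordinates'' is an overcount; after removing $J_n\setminus F$ and $\rho(J_n)\setminus F$ the remaining nontrivial positions lie in $F$ itself, but this does not affect the argument.
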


\begin{proof}
    Pick $u\in A\ominus \bC1$. Pick $i_1,i_2,\ldots\in I$ to be distinct points such that $\rho(i_n) = j_n\neq i_n$ for all $n\geq 1$. Then set $y_n^{i_n} = u,$ $y_n^k = 1$ for $k\neq i_n,$ and $y_n = \bigotimes_{i\in I} y_n^i$ (a pure tensor with $u$ in the $i_n$th coordinate and 1s elsewhere). 
    
    We claim that for all $a\in A^I,$ $\inp{ay_n}{\sigma(y_n)a} \to 0.$ If the claim holds, then clearly we cannot have $ay_n = \sigma(y_n)a$ for any nonzero $a\in A^I$ and so the action $\sigma$ would be properly outer. 

    To prove the claim, we fix $x = \bigotimes_{k\in I}x_k, z=\bigotimes_{k\in I}z_k$ to be pure tensors, where only finitely many of the coordinates are not equal to 1. Then $xy_n = x_{i_n}u\bigotimes_{k\neq i_n}x_k$ (that is, $xy_n$ is a pure tensor with $x_{i_n}u$ in the $i_n^{th}$ component and $x_k$ in all of the other components). Similarly, $\sigma(y_n)z = uz_{j_n}\bigotimes_{k\neq j_n}z_k$. Hence $$\tau(xy_nz^*\sigma(y_n)^*) = \tau(x_{i_n}uz_{i_n}^*)\tau(x_{j_n}z_{j_n}^*u^*)\prod_{k\neq i_n,j_n}\tau(x_kz_k^*).$$ As $n\to \infty$, $i_n$ escapes the supports of $x$ and $z$ and so $\tau(x_{i_n}uz_{i_n}^*) = 0$ for all $n$ sufficiently large. Hence $\tau(xy_nz^*\sigma(y_n)^*)\to0.$ Taking linear combinations and 2-norm limits of pure tensors proves the claim for all $a\in A^I.$
\end{proof}

As a consequence of Lemmas \ref{lem-tensor-outer} and \ref{lem-tensor-outer-free} and Proposition 5.2.3 of \cite{anantharaman2017introduction}, we have that $M = B^I\rtimes \Gamma$ is a \twoone factor whenever (i) $B$ is a nontrivial, tracial von Neumann algebra and $\Gamma\curvearrowright I$ is a free action with infinite orbits, or (ii) $B$ is a diffuse tracial von Neumann algebra and $\Gamma\curvearrowright I$ is a faithful action with infinite orbits.

The third lemma we record here allows us to commute certain commutants inside of crossed product von Neumann algebras.

\begin{lem}
\label{lem-outer-commutant}
    Let $A$ be a tracial von Neumann algebra and let $G$ be a finite group acting on $A$ by properly outer automorphisms $\sigma_g$. Denote by $A^G$ the elements of $A$ fixed by the action of $G.$ Then $(A^G)'\cap A = Z(A).$
\end{lem}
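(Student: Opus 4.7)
The easy inclusion $Z(A) \subseteq (A^G)' \cap A$ is immediate, since elements of $Z(A)$ commute with all of $A$, in particular with the subalgebra $A^G$.

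For the reverse, I observe that $(A^G)' \cap A$ is a von Neumann subalgebra of $A$ and hence is generated by its unitary group. It therefore suffices to prove that every unitary $u \in (A^G)' \cap A$ lies in $Z(A)$. So fix such a unitary $u$. Then the inner automorphism $\theta := \mathrm{Ad}(u) \in \mathrm{Aut}(A)$ fixes $A^G$ pointwise, precisely because $u$ commutes with $A^G$. The plan is to conclude $\theta = \mathrm{id}$, i.e.\ $u \in Z(A)$, by combining the Galois correspondence for properly outer finite group actions with proper outerness itself.

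The Galois correspondence in this setting asserts that every automorphism of $A$ fixing $A^G$ pointwise must coincide with some $\sigma_g$ for $g \in G$; applied to $\theta$, it yields $\theta = \sigma_g$ for some $g$. This means $u a u^{-1} = \sigma_g(a)$ for every $a \in A$, equivalently $u a = \sigma_g(a) u$. For $g \neq e$ proper outerness of $\sigma_g$ then forces $u = 0$, contradicting unitarity, so $g = e$ and $u \in Z(A)$, as required.

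The main obstacle is supplying the Galois correspondence in this non-factor generality. The route I would take to make this self-contained works inside the crossed product $M = A \rtimes G$ with canonical implementing unitaries $u_g$. Proper outerness yields the standard identity $A' \cap M = Z(A)$ by a Pimsner--Popa basis computation: an element $\sum_g a_g u_g \in A' \cap M$ must satisfy $a\, a_g = a_g\, \sigma_g(a)$ for each $g$, forcing $a_g = 0$ for $g \neq e$ and $a_e \in Z(A)$. Then, for each $g \in G$, the element $u\, u_g^{-1} \in M$ implements the automorphism $\theta \circ \sigma_g^{-1}$ on $A$; one checks that exactly one of these elements belongs to $A' \cap M = Z(A)$. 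Writing the resulting relation as $u = z\, u_g$ with $z \in Z(A)$, the Pimsner--Popa expansion of $z\, u_g$ over $A$ has only its $u_g$-component nonzero, and this lies in $A$ only when $g = e$; hence $u = z \in Z(A)$.
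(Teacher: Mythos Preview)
Your strategy via the Galois correspondence is appealing, but the crucial step is circular. You assert that ``one checks that exactly one of the elements $u\,u_g^{-1}$ belongs to $A'\cap M = Z(A)$,'' yet this is precisely the statement you are trying to prove. Indeed, $u\,u_g^{-1}$ lies in $A'\cap M$ if and only if it commutes with $A$, which happens if and only if $\theta\circ\sigma_g^{-1}=\mathrm{id}$, i.e.\ $\theta=\sigma_g$. So the ``check'' is nothing other than the Galois correspondence itself. If you instead try to locate $u\,u_g^{-1}$ inside $A'\cap M$ by looking at Fourier supports, you find that $u\,u_g^{-1}$ is supported on $u_{g^{-1}}$ while elements of $Z(A)$ are supported on $u_e$; this forces $g=e$, but then you are left needing $u\in A'\cap M$, which is exactly $u\in Z(A)$---again the original claim.

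Worse, the general Galois correspondence you invoke is \emph{false} when $A$ is not a factor. For instance, take $A=R\oplus R$ with $G=\bZ/2$ acting by $\sigma_g=\alpha\oplus\beta$ for two properly outer involutions $\alpha,\beta$ of $R$; the action is properly outer, $A^G=R^\alpha\oplus R^\beta$, and the automorphism $\alpha\oplus\mathrm{id}$ fixes $A^G$ pointwise yet equals neither $\mathrm{id}$ nor $\sigma_g$. Of course this particular automorphism is not inner, so it does not directly contradict your needed claim---but that is exactly the point: the only version of the Galois correspondence that survives in the non-factor setting is the one restricted to inner automorphisms, and that version is equivalent to the lemma. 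Your crossed-product sketch does not supply an independent proof of it.

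For comparison, the paper avoids Galois theory entirely and argues by a direct averaging: for $y\in\cU(A)$ one has $\sum_g\sigma_g(y)\in A^G$, so the unitary $x\in (A^G)'\cap A$ commutes with each such average; taking the minimal $2$-norm element of the closed convex hull of the tuples $\bigl(\sigma_g(y)x\sigma_h(y)^*\bigr)_{g,h}$ over $y\in\cU(A)$ and invoking proper outerness kills the off-diagonal blocks, yielding $\tau(E_{Z(A)}(x)x^*)=1$ and hence $x\in Z(A)$. If you want to salvage a crossed-product approach, you would need to prove directly (without appealing to the correspondence) that $(A^G)'\cap A\subset A'\cap M$, and at that point you are essentially reproducing an averaging argument anyway.
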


\begin{proof}
    It is clear that $Z(A)\subset (A^G)'\cap A.$ We prove the other inclusion. Let $x\in (A^G)'\cap A$ be a unitary. Then for all $y\in A,$ $\sum_{g\in G}\sigma_g(y) \in A^G,$ so $\sum_{g\in G}\sigma_g(y)x = \sum_{g\in G}x\sigma_g(y).$ Multiplying both sides by $\sum_{h\in G}\sigma_h(y^*)x^*$ and taking the trace gives the equation
    $$\sum_{g,h\in G} \tau(\sigma_g(y)x\sigma_h(y)^*x^*) = \sum_{g,h\in G} \tau(\sigma_g(y)\sigma_h(y)^*).$$
    Now consider the set
    $$K = \overline{\mathrm{conv}}^{\|\cdot\|_2}\{\bigoplus_{g,h\in G}\sigma_g(y)x\sigma_h(y)^*\oplus\bigoplus_{g,h\in G}\sigma_g(y)\sigma_h(y)^*:y\in \cU(A)\}\subset A^{\oplus 2|G|^2}$$
    By Lemma \ref{lem-technical-avging}, $K$ has a unique element of minimal 2-norm $c\in A^{\oplus 2|G|^2}$. Write $c = ((a_{g,h})_{g,h\in G},(b_{g,h})_{g,h\in G}).$ Since $c$ is unique, we must have that 
    $$((a_{g,h})_{g,h\in G},(b_{g,h})_{g,h\in G}) = ((\sigma_g(y)a_{g,h}\sigma_h(y)^*)_{g,h\in G},(\sigma_g(y)b_{g,h}\sigma_h(y)^*)_{g,h\in G})$$
    for all $y\in \cU(A).$ In other words, for each $g,h\in G,$ and $y\in\cU(A)$, $\sigma_g(y)a_{g,h} = a_{g,h}\sigma_h(y)$ and $\sigma_g(y)b_{g,h} = b_{g,h}\sigma_h(y)$. Since the action is properly outer, this implies that $a_{g,h}=b_{g,h} = 0$ for all $g\neq h.$ It is also clear that $a_{g,g}\in Z(A)$ and $b_{g,g} = 1$ for all $g\in G.$ In fact, $a_{g,g} = E_{Z(A)}(x)$ by Lemma \ref{lem-expect-onto-commutant}.

    Let $\ee>0.$ We can now choose, $t_i\in \bR$ and $y_i\in\cU(A)$ such that for all $g\neq h$ in $G,$ the following four inequalities hold:
    \begin{align*}
        \|\sum_{i=1}^n t_i\sigma_g(y_i)x\sigma_h(y_i)^*\|_2 &< \ee &
        \|\sum_{i=1}^n t_i\sigma_g(y_i)\sigma_h(y_i)^*\|_2 &< \ee \\
        \|\sum_{i=1}^n t_i\sigma_g(y_i)x\sigma_g(y_i)^* - E_{Z(A)}(x)\|_2 &< \ee &
        \|\sum_{i=1}^n t_i\sigma_g(y_i)\sigma_g(y_i)^* - 1\|_2 &< \ee \\
    \end{align*}

    Since $x$ is a unitary, we further deduce that for all $g\in G$ that
    $$\|\sum_{i=1}^n t_i\sigma_g(y_i)x\sigma_g(y_i)^*x^* - E_{Z(A)}(x)x^*\|_2 < \ee.$$
    Now, 
    \begin{align*}
        ||G|\tau(E_{Z(A)}(x)x^*) - |G|| &= |\sum_{g,h\in G}\delta_{g,h}(\tau(E_{Z(A)}(x)x^*)-1)| \\
        &\leq |\sum_{g,h} \tau(\delta_{g,h}E_{Z(A)}(x)x^* -\sum_{i=1}^n t_i\sigma_g(y_i)x\sigma_h(y_i)^*x^*)| \\
        &+ |\sum_{g,h} \tau(\sum_{i=1}^n t_i\sigma_g(y_i)x\sigma_h(y_i)^*x^* - \sum_{i=1}^n t_i\sigma_g(y_i)\sigma_h(y_i)^*)| \\
        &+ |\sum_{g,h} \tau(\sum_{i=1}^n t_i\sigma_g(y_i)\sigma_h(y_i)^* - \delta_{g,h})| \\
        &< 2|G|^2\ee,
    \end{align*}
    since the middle term is 0 and $|\tau(a)| \leq \|a\|_2.$ Therefore $|\tau(E_{Z(A)}(x)x^*) - 1| < 2|G|\ee.$ Since this holds for all $\ee>0,$ we see that $\tau(E_{Z(A)}(x)x^*) = 1 = \tau(xx^*).$ But this can only happen if $E_{Z(A)}(x) = x,$ so $x\in Z(A)$.
\end{proof}

\subsection{Ultrapowers, central sequences, full factors, and the VV property}
\label{subsect-vv}
Let $(M,\tau)$ be a tracial von Neumann algebra, and let $\omega$ denote a free ultrafilter on $\bN$. Then $M^\omega$ denotes the following algebra:
$$M^\omega := \{(x_n)_{n\in\bN}\colon x_n\in M, \ \sup_{n\in\bN}\|x_n\|<\infty\}/I_\omega, $$

where $I_\omega$ is the following ideal:

$$I_\omega := \{(x_n)_{n\in\bN}\colon x_n\in M, \ \sup_{n\in\bN}\|x_n\|<\infty, \ \lim_{n\to\omega}\|x_n\|_2 = 0\}.$$

The algebra $M$ embeds diagonally into $M^\omega$ via the map $x\mapsto (x_n)_n$. We note that $M^\omega$ is tracial too, with trace $\tau^\omega((x_n)) = \lim_{n\to\omega}\tau(x_n).$

We define a \textit{central sequence} of $M$ to be a $\|\cdot\|_\infty$-bounded sequence $(x_n)$ of elements in $M$ such that $\|x_ny-yx_n\|_2\to0$ for all $y\in M$. We say a $\|\cdot\|_\infty$-bounded sequence $(x_n)$ of elements in $M$ is \textit{asymptotically trivial} if $\|x_n-\tau(x_n)1\|_2\to 0.$ 

We can now define a \textit{full factor} as a \twoone factor $M$ such that $M'\cap M^\omega = \bC1,$ where we view $M$ as a subalgebra of $M^\omega$ via the diagonal embedding. Equivalently, a \twoone factor $M$ is full if and only if every central sequence of $M$ is asymptotically trivial.

In order to use the machinery of Isono and Marrakchi, we require $M = B^I\rtimes\Gamma$ to be a full factor. When $\Gamma\curvearrowright I$ is nonamenable, Vaes and Verraedt showed that the fullness of $M$ is equivalent to the following condition (see \cite{vaes2015classification}, Lemma 2.7):
\begin{itemize}
    \item Every $\|\cdot\|_\infty$-bounded central sequence $(y_n)_n$ in $ L\Gamma$ satisfying \newline $\|y_n-E_{L(\mathrm{Stab}(i))}(y_n)\|_2\to0$ for all $i\in I$ is asymptotically trivial.
\end{itemize}
We call this condition the \textit{VV property}. We note that $\Gamma\curvearrowright I$ having the VV property is equivalent to the algebra $(L\Gamma)' \cap \bigcap_{i\in I}(L(\mathrm{Stab}(i)))^\omega$ being trivial.

\begin{prop}
\label{prop-vv-ez}
An action $\Gamma\curvearrowright I$ has the VV property if either of the following is satisfied:
\begin{enumerate}
    \item[(a)] $\Gamma $ is not inner amenable.
    \item[(b)] There exists $F\subset I$ finite such that $\mathrm{Stab}(F) = \{e\}.$ \qedhere
\end{enumerate}
\end{prop}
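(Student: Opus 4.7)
The plan is to use the reformulation stated just before the proposition: the VV property is equivalent to $(L\Gamma)' \cap \bigcap_{i\in I}(L\Stab(i))^\omega = \bC 1$. So in both cases I want to show that a $\|\cdot\|_\infty$-bounded central sequence $(y_n) \subset L\Gamma$ satisfying $\|y_n - E_{L\Stab(i)}(y_n)\|_2 \to 0$ for every $i \in I$ must satisfy $\|y_n - \tau(y_n)\|_2 \to 0$.

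For part (a), I would not use the stabilizer hypotheses at all: non-inner amenability already forces every central sequence in $L\Gamma$ to be asymptotically trivial. The argument is the classical one of Effros: assuming toward a contradiction that $(y_n)$ is central in $L\Gamma$ with $\|y_n - \tau(y_n)\|_2 \not\to 0$, I pass to a subsequence on which this quantity is bounded below, then replace $y_n$ by $\xi_n := (y_n - \tau(y_n))/\|y_n - \tau(y_n)\|_2$, viewed as a vector in $\ell^2(\Gamma)$. Then $\|\xi_n\|_2 = 1$, $\xi_n(e) = 0$, and centrality of $(y_n)$ in $L\Gamma$ translates to $\|h\cdot\xi_n - \xi_n\|_{\ell^2}\to 0$ for each $h$, where $h$ acts by conjugation on coordinates. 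Using $\||a|^2 - |b|^2\|_1 \le \|a-b\|_2(\|a\|_2 + \|b\|_2)$, the probability measures $|\xi_n|^2$ on $\Gamma \setminus \{e\}$ are asymptotically conjugation-invariant, so any weak-$*$ cluster point gives a conjugation-invariant mean on $\ell^\infty(\Gamma \setminus \{e\})$, contradicting non-inner amenability.

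For part (b), let $F = \{i_1, \ldots, i_k\}$ with $\Stab(F) = \{e\}$, so $L\Stab(F) = \bC 1$ and $E_{L\Stab(F)}(\cdot) = \tau(\cdot)1$. The key observation is that for any two subgroups $H_1, H_2 \leq \Gamma$, the subalgebras $LH_1$ and $LH_2$ form a commuting square in $L\Gamma$ with intersection $L(H_1 \cap H_2)$: this is immediate from $E_{LH}(u_g) = \chi_H(g) u_g$, which gives $E_{LH_1}E_{LH_2}(u_g) = \chi_{H_1 \cap H_2}(g) u_g = E_{L(H_1 \cap H_2)}(u_g)$ on the unitary basis, and then one extends by linearity and $\|\cdot\|_2$-continuity. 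Iterating, I would obtain $E_{L\Stab(F)} = E_{L\Stab(i_1)} \circ \cdots \circ E_{L\Stab(i_k)}$. Now for a sequence $(y_n)$ satisfying the VV hypothesis at each $i_j$, the triangle inequality together with the 2-norm contractivity of each $E_{L\Stab(i_j)}$ gives $\|y_n - E_{L\Stab(i_1)} \cdots E_{L\Stab(i_k)}(y_n)\|_2 \to 0$, and the right-hand side equals $\tau(y_n)1$, so $(y_n)$ is asymptotically trivial.

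Neither part presents a real obstacle. The only mildly technical item is the commuting-square identity for group subalgebras in (b), which is a routine computation on the unitary basis combined with an inductive application. Note that (a) is purely a statement about $L\Gamma$ and uses nothing about the action, while (b) uses the action only to produce the finite set $F$ with trivial joint stabilizer.
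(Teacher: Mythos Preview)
Your proposal is correct and follows essentially the same approach as the paper. For (a) the paper simply cites the implication ``$\Gamma$ not inner amenable $\Rightarrow$ $L\Gamma$ has no property $\Gamma$'' (Effros), whereas you spell out that argument explicitly; for (b) both you and the paper use that the expectations $E_{L\Stab(i)}$ commute with composition $E_{L\Stab(F)}$ and then combine the hypotheses over $i\in F$ via contractivity and the triangle inequality.
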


\begin{proof} We include short proofs for clarity:
    \begin{enumerate}
        \item[(a)] If $\Gamma$ is not inner amenable, then $L\Gamma$ does not have Property $\Gamma$ (Proposition 15.4.2 of \cite{anantharaman2017introduction}). Therefore all central sequences in $L\Gamma$ are already trivial.
        \item[(b)] If $\mathrm{Stab}(F) = \{e\},$ then $L(\mathrm{Stab}(F)) = \bC1,$ and therefore $E_{L(\mathrm{Stab}(F))} = \tau.$ We observe that the conditional expectations $E_{L(\mathrm{Stab}(i))}$ all commute. Let $(x_n)_n$ be a $\|\cdot\|_\infty$-bounded central sequence in $L\Gamma.$ Then $\|x_n-\tau(x_n)1\|_2 = \|x_n - E_{L(\mathrm{Stab}(F))}(x_n)\|_2\to 0$ if and only if for all $i\in F$ we have that $\|x_n - E_{L(\mathrm{Stab}(i))}(x_n)\|_2\to 0$. \qedhere
    \end{enumerate}
\end{proof}

\subsection{Intertwining by bimodules}
\label{subsect-popa-thm}
The reader is directed to \cite{anantharaman2017introduction}, and in particular Chapters 9, 13, and 17, for more information about the Jones basic construction, bimodules, and intertwining by bimodules respectively.

We recall the \textit{Jones construction} for a tracial von Neumann algebra $(M,\tau)$ and a von Neumann subalgebra $B\subset M$: $L^2(B)$ naturally embeds as a subspace of $L^2(M),$ and there is a projection $e_B \in \cB(L^2(M))$ with range $L^2(B).$ Then we denote by $\inp{M}{B}$ the algebra $(M\cup\{e_B\})'' \subset \cB(L^2(M)).$ The Jones construction $\inp{M}{B}$ has a canonical normal, faithful, semi-finite trace $\hat{\tau}$.

We recall too the notion of a bimodule $_M\cH_N$, namely, it is a *-representation of $M\otimes N^{op}$ on $\cB(\cH)$ whose restrictions to $M$ and $N^{op}$ respectively are normal.

We now state Popa's Theorem (appearing, for example, as Theorem 2.1 of \cite{popa2006strong}). %It is arguably the fundamental theorem of deformation/rigidity theory.

\begin{thm}
\label{thm-popa-fundamental}
Let $P,Q$ be von Neumann subalgebras of a tracial von Neumann algebra $(M,\tau).$ Then the following are equivalent:

\begin{enumerate}
    \item[(a)] There is no net $(u_i)$ of unitary elements in $P$ such that for every $x,y\in M,$ $\lim_i \|E_Q(x^*u_iy)\|_2=0$.
    \item[(b)] There exists a nonzero $h\in \inp{M}{Q}_+ \cap P'$ with $\hat{\tau}(h)<\infty$.
    \item[(c)] There exists a nonzero $P$-$Q$-subbimodule $\cH$ of $L^2(M)$ such that $\mathrm{dim}(\cH_Q)<\infty$.
    \item[(d)] There exists an integer $n\geq 1$, a projection $q\in M_n(Q)$, a nonzero partial isometry $v\in M_{1,n}(M)$ and a normal unital homomorphism $\theta\colon P\to qM_n(Q)q$ such that $v^*v\leq q$ and $xv = v\theta(x)$ for all $x\in P$.
    \item[(e)] There exist nonzero projections $p\in P$ and $q\in Q$, a unital normal homomorphism $\theta\colon pPp\to qQq$ and a nonzero partial isometry $v\in pMq$ such that $xv=v\theta(x)$ for all $x\in pPp.$ Moreover, $vv^* \in (pPp)'\cap pMp$ and $v^*v\in \theta(pPp)'\cap qMq.$ \qedhere
\end{enumerate}
\end{thm}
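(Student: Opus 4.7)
The plan is to establish the equivalence by proving the cycle (a)$\Rightarrow$(b)$\Rightarrow$(c)$\Rightarrow$(a) for the analytic core of the theorem, and then treating (c), (d), (e) as mutually equivalent reformulations via standard Hilbert-module structure theory. Everything takes place inside the basic construction $\inp{M}{Q}\subset\cB(L^2(M))$, which is the commutant of the right $Q$-action on $L^2(M)$ and carries a semifinite trace $\hat\tau$ characterized by $\hat\tau(ae_Qb)=\tau(ab)$ for $a,b\in M$.

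For (a)$\Rightarrow$(b), I would mimic the averaging argument behind Lemma \ref{lem-expect-onto-commutant}. Form the convex set
\[
K=\overline{\mathrm{conv}}\,\{ue_Qu^*:u\in\cU(P)\}\subset L^2(\inp{M}{Q},\hat\tau),
\]
which is $\|\cdot\|_{2,\hat\tau}$-bounded because $\|ue_Qu^*\|_{2,\hat\tau}^2=\hat\tau(e_Q)=1$, and let $h$ be its unique element of minimal norm. By uniqueness $uhu^*=h$ for every $u\in\cU(P)$, so $h\in P'\cap\inp{M}{Q}_+$ with $\hat\tau(h)\leq1$. If $h=0$, then finite convex combinations $\sum_j\lambda_ju_je_Qu_j^*$ would tend to $0$ in $\|\cdot\|_{2,\hat\tau}$; expanding via the identity $\hat\tau((ue_Qu^*)(ve_Qv^*))=\|E_Q(u^*v)\|_2^2$ and passing through a diagonal extraction over a dense subset of $M\times M$, one produces a net of unitaries in $P$ realizing the decoupling ruled out by (a). Hence $h\neq 0$, giving (b).

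For (b)$\Rightarrow$(c), replace $h$ by a spectral projection $p=\chi_{[\delta,\|h\|]}(h)\in P'\cap\inp{M}{Q}$ of finite $\hat\tau$-trace. Since $\inp{M}{Q}$ commutes with the right $Q$-action, the range $pL^2(M)\subset L^2(M)$ is a closed right $Q$-submodule, left $P$-invariant because $p\in P'$, and of right $Q$-dimension $\hat\tau(p)<\infty$. For (c)$\Rightarrow$(a), the Pimsner--Popa basis theorem supplies an orthonormal $Q$-basis $\{\xi_1,\ldots,\xi_n\}$ of $\cH$; for any $\xi\in\cH$ and any $u\in\cU(P)$, left $P$-invariance yields $u\xi=\sum_k\xi_kE_Q(\xi_k^*u\xi)$, hence $\|u\xi\|^2=\sum_k\|E_Q(\xi_k^*u\xi)\|_2^2$. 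A hypothetical net $(u_i)$ contradicting (a) would force $\|u_i\xi\|\to0$, in conflict with $\|u_i\xi\|=\|\xi\|$.

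The implications (c)$\Rightarrow$(d)$\Rightarrow$(e)$\Rightarrow$(c) are structural bookkeeping inside finite right Hilbert $Q$-modules. For (c)$\Rightarrow$(d), a Pimsner--Popa basis identifies $\cH$ with $q(L^2(Q)^{\oplus n})$ for a projection $q\in M_n(Q)$; packaging the embedding $\cH\hookrightarrow L^2(M)$ by its $M$-valued coefficient row gives $v\in M_{1,n}(M)$ with $v^*v\leq q$, and transporting the left $P$-action through the identification produces $\theta\colon P\to qM_n(Q)q$ with $xv=v\theta(x)$. For (d)$\Rightarrow$(e), compress $v$ on the left by the support projection of $vv^*\in P'\cap M$ and, after a matrix-unit manipulation reducing the amplification, cut on the right by a minor of $q$ to land in a corner $qQq$ of $Q$ itself. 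For (e)$\Rightarrow$(d), pick partial isometries $w_1,\ldots,w_n\in P$ with $w_j^*w_j=p$ and $\sum_jw_jw_j^*=1$, and define $\theta'(u)_{jk}:=\theta(w_j^*uw_k)$ and $v'_j:=w_jv$ to re-inflate a corner intertwiner to one from all of $P$. The main obstacle is unmistakably (a)$\Rightarrow$(b): one must execute the averaging inside a semifinite, nontracial algebra and then upgrade a convex-combination estimate to a statement about a genuine net of unitaries in $\cU(P)$. The reductions among (c), (d), and (e) are by now standard but technically dense Hilbert-module manipulations.
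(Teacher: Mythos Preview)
The paper does not prove this theorem; it is stated in Section~\ref{subsect-popa-thm} with a citation to \cite{popa2006strong} (and implicitly to Chapter~17 of \cite{anantharaman2017introduction}), so there is no in-paper proof to compare against. Your sketch follows the standard route found in those references: the averaging/minimal-element argument for (a)$\Leftrightarrow$(b), spectral truncation for (b)$\Rightarrow$(c), a Pimsner--Popa basis for (c)$\Rightarrow$(a), and Hilbert-module bookkeeping for (c)$\Leftrightarrow$(d)$\Leftrightarrow$(e).

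One concrete point to tighten in (a)$\Rightarrow$(b): starting the averaging from $e_Q$ alone is not quite enough to manufacture the contradicting net. If the minimal element of $\overline{\mathrm{conv}}\{ue_Qu^*:u\in\cU(P)\}$ vanishes, the expansion $\|\sum_j\lambda_ju_je_Qu_j^*\|_{2,\hat\tau}^2=\sum_{j,k}\lambda_j\lambda_k\|E_Q(u_j^*u_k)\|_2^2$ only controls the quantities $\|E_Q(u^*v)\|_2$ for $u,v\in\cU(P)$, not $\|E_Q(x^*uy)\|_2$ for arbitrary $x,y\in M$; a diagonal extraction does not bridge this gap. The usual fix is to first use the negation of (a) to find a finite $F\subset M$ and $\varepsilon>0$ with $\sum_{x,y\in F}\|E_Q(x^*uy)\|_2^2\geq\varepsilon$ for all $u\in\cU(P)$, then average $a=\sum_{x\in F}xe_Qx^*$ instead of $e_Q$; the resulting minimal element $h$ satisfies $\hat\tau(ha)\geq\varepsilon$, forcing $h\neq0$. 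With that adjustment, and interpreting $E_Q(\xi_k^*u\xi)$ in (c)$\Rightarrow$(a) via the $L^2(Q)$-valued inner product (since the $\xi_k$ need not lie in $M$), your outline is correct.
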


When any of the above equivalent conditions are satisfied, we write $P\prec_M Q$ and say that $P$ embeds into a corner of $Q$ in $M$.

We now recall the notion of $P$-$Q$-finiteness. Given von Neumann subalgebras $P,Q \subset M,$ we say that $x\in  M$ is $P$-$Q$-finite if there exist $x_1,\ldots,x_n,y_1,\ldots,y_m\in M$ such that $xQ\subset \sum_{i=1}^n Px_i$ and $Px\subset \sum_{j=1}^m y_jQ.$ 

By Popa's Theorem, if $M$ has a nonzero $P$-$Q$ finite element then $P\prec_M Q$ and $Q\prec_M P.$ (Take, for example, the closure of $PxQ.$ This is a nonzero $P$-$Q$-subbimodule of $L^2(M)$ with finite right $Q$-dimension.)

We take as the definition of $\mathrm{QN}_{M}(Q)$, the quasi-normalizer of $Q$ in $M,$ to be the set of $Q$-$Q$-finite elements of $M.$

Given two $M$-$N$-bimodules $\cH$ and $\cK,$ we have two notions of containment:
\begin{enumerate}
    \item[(a)] We say $\cH$ is contained in $\cK$ and write $\cH\subset\cK$ if there is an $M$-$N$-bimodular isometry $V:\cH\to\cK.$
    \item[(b)] We say $\cH$ is weakly contained in $\cK$ and write $\cH\prec \cK$ if $\|\pi_\cH(x)\|\leq \|\pi_\cK(x)\|$ for all $x\in M\otimes N^{op}$ (where $\pi_\cH$ is the representation of $M\otimes N^{op}$ in $\cB(\cH)$ and similarly for $\pi_\cK$). 
\end{enumerate}

We note that if $L^2(P) \subset L^2(\inp{M}{Q})$ as $P$-$P$-bimodules then $P\prec_M Q.$

\subsection{Weak containment and relative amenability}
\label{subsect-weakcont}

Related to weak containment is the notion of \emph{relative amenability} of subalgebras. The following theorem and definition are due to Ozawa and Popa (Theorem 2.1 of \cite{ozawa2010class}, Theorem 13.4.4 of \cite{anantharaman2017introduction}):

\begin{thm}
\label{thm-rel-amen}
    Let $Q,N\subset (M,\tau)$ be tracial von Neumann algebras. The following are equivalent:
    \begin{enumerate}
        \item[(a)] There exists an $N$-central state $\varphi$ on $\inp{M}{Q}$ such that $\varphi|_M = \tau$.
        \item[(b)] There exists an $N$-central state $\varphi$ on $\inp{M}{Q}$ such that $\varphi$ is normal on $M$ and faithful on $Z(N'\cap M)$.
        \item[(c)] There exists a conditional expectation $\Phi$ from $\inp{M}{Q}$ onto $N$ such that $\Phi|_M = E_N.$
        \item[(d)] There exists a net $(\xi)_n$ in $L^2(\inp{M}{Q})$ such that $\lim_{n\to\infty}\inp{x\xi_n}{\xi_n} = \tau(x)$ for every $x\in M$ and $\lim_{n\to\infty} \|[y,\xi_n]\|_2 = 0$ for every $y\in N.$
        \item[(e)] $_ML^2(M)_N \prec _ML^2(\inp{M}{Q})_N$.\qedhere
    \end{enumerate}
\end{thm}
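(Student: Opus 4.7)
The plan is to prove the cycle of equivalences by starting with the soft functional-analytic ones and then doing the real work to upgrade a bare $N$-central state to a conditional expectation (and further to one that is normal and faithful on $Z(N'\cap M)$).

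First, I would establish (a) $\Leftrightarrow$ (d) $\Leftrightarrow$ (e) by standard Hahn--Banach / convexity arguments. For (d) $\Leftrightarrow$ (e), note that an approximate $M$-tracial, $N$-central net of vectors $\xi_n\in L^2(\langle M,Q\rangle)$ is exactly what it takes to weakly contain ${}_M L^2(M)_N$ inside ${}_M L^2(\langle M,Q\rangle)_N$, since the vector $\hat 1 \in L^2(M)$ is $M$-tracial and $N$-central. For (a) $\Leftrightarrow$ (d), any weak-$*$ limit of the vector states $\omega_{\xi_n}$ on $\langle M,Q\rangle$ yields an $N$-central state restricting to $\tau$ on $M$; conversely, a state $\varphi$ as in (a) can be approximated in the weak-$*$ topology by normal vector states on $\langle M,Q\rangle$, and then a standard Day-type convexity argument (taking convex combinations of vector states) gives a net of $L^2$-vectors whose associated states approximate $\varphi$ and are asymptotically $N$-central, using that the $N$-central states form a weak-$*$ closed convex set.

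Next I would show (c) $\Rightarrow$ (a) by defining $\varphi := \tau \circ \Phi$, which is automatically $N$-central (because $\Phi$ is $N$-bimodular, being a conditional expectation onto $N$ that extends $E_N$). For the converse (a) $\Rightarrow$ (c), I would use the following averaging trick: given an $N$-central state $\varphi$ on $\langle M,Q\rangle$ with $\varphi|_M=\tau$, I would exhibit a conditional expectation $\Phi\colon \langle M,Q\rangle\to N$ via the formula $\tau(\Phi(T)y) = \varphi(Ty)$ for $T\in \langle M,Q\rangle$ and $y\in N$, showing that the right-hand side defines a normal functional on $N$ (this uses normality of $\varphi$ on $M$, which can be arranged by first solving a variant of the problem with the aid of approximate vectors from (d)). Alternatively, one applies the Hahn--Banach / Arens trick: the map $M\to N$ given by $x\mapsto \varphi(x\,\cdot\,)$ coincides with $E_N$ on $M$ once one checks $N$-centrality, and then Tomiyama's theorem guarantees it extends to a conditional expectation from $\langle M,Q\rangle$ onto $N$.

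The main obstacle is the upgrade (a) $\Rightarrow$ (b): producing a state $\varphi$ that is simultaneously normal on $M$ and faithful on $Z(N'\cap M)$. My plan here is a two-step construction. First, I would use the equivalence with (d) to approximate $\varphi$ by vector states from $L^2$-vectors and then form a convex combination with the trace on the center $Z(N'\cap M)$ (viewed as a sub-state via the basic-construction trace $\hat\tau$) to guarantee faithfulness on $Z(N'\cap M)$; the convex combination preserves $N$-centrality and, after another averaging, normality on $M$. The faithfulness on $Z(N'\cap M)$ is ensured by picking, for each minimal central projection, a vector supported appropriately, and then taking a countable convex combination with summable weights, which remains normal on $M$. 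Finally, (b) $\Rightarrow$ (a) is immediate by forgetting the normality and faithfulness. This last step is where the argument really uses that $\hat\tau$ is semi-finite on $\langle M,Q\rangle$, since we need enough vectors to separate $Z(N'\cap M)$.
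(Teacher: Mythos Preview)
The paper does not actually prove this theorem: it is quoted from Ozawa--Popa \cite{ozawa2010class} (and \cite{anantharaman2017introduction}) as a known result in the preliminaries, with no argument given. So there is nothing in the paper to compare your proof against.

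That said, your sketch contains a genuine error in the handling of (a) and (b). You treat (b) $\Rightarrow$ (a) as ``immediate by forgetting the normality and faithfulness,'' but condition (a) is not weaker than (b): it demands $\varphi|_M=\tau$ exactly, not merely that $\varphi|_M$ be some normal state that is faithful on $Z(N'\cap M)$. Conversely, (a) $\Rightarrow$ (b) is the trivial direction: if $\varphi|_M=\tau$ then $\varphi$ is automatically normal on $M$ (since $\tau$ is) and faithful on all of $M\supset Z(N'\cap M)$ (since $\tau$ is). So your ``main obstacle'' paragraph is attacking a triviality, while the step you dismiss in one line is the one that needs an argument. The actual content of (b) $\Rightarrow$ (a) is to start from an $N$-central state whose restriction to $M$ is only known to be normal and faithful on $Z(N'\cap M)$, and to correct it to one whose restriction is exactly $\tau$; this uses that $N$-centrality forces the Radon--Nikodym density of $\varphi|_M$ with respect to $\tau$ to lie in $N'\cap M$, together with the faithfulness hypothesis on the center, to perform the normalization.

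Your (a) $\Rightarrow$ (c) sketch is also thin: the formula $\tau(\Phi(T)y)=\varphi(Ty)$ only defines $\Phi(T)$ as an element of $N$ once you know the right-hand side is a \emph{normal} linear functional on $N$ bounded by $\|T\|\cdot\tau$, and $N$-centrality of $\varphi$ alone does not give that. The standard route passes through (d) first and uses the vectors $\xi_n$ to build $\Phi$ as a point-ultraweak limit of completely positive maps $T\mapsto E_N(\xi_n^* T\xi_n)$ (suitably interpreted via the bimodule structure), rather than trying to read $\Phi$ directly off of $\varphi$.
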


\begin{defn}
    Let $Q,N\subset (M,\tau)$ be tracial von Neumann algebras. If any of the conditions of Theorem \ref{thm-rel-amen} hold, we say that $N$ is \emph{amenable relative to} $Q$ \emph{inside} $M$, and write $N \lessdot_M Q.$
\end{defn}

The following basic fact appears in slightly different forms in Proposition 3.1 of \cite{anantharaman1979action} and Theorem 3.2.4(3) of \cite{popa1986correspondences}. We reproduce the proof sketch from \cite{popa1986correspondences} for completeness.

\begin{lem}
\label{lem-rel-amen-crossed-prod}
    Let $\Gamma$ be an amenable group, $(B,\tau)$ a tracial von Neumann algebra, and $\Gamma\curvearrowright B$ a trace-preserving action. Then $B\rtimes\Gamma \lessdot_{B\rtimes\Gamma} B$. In particular, we have that $L^2(B\rtimes\Gamma)\prec L^2(B\rtimes\Gamma)\otimes_{B}L^2(B\rtimes\Gamma)$.
\end{lem}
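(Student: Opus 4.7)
The plan is to verify relative amenability $M \lessdot_M B$, where $M := B \rtimes \Gamma$, by exhibiting an explicit net satisfying condition (d) of Theorem \ref{thm-rel-amen} with $N = M$. The amenability hypothesis enters through a Følner sequence $(F_n)_n$ in $\Gamma$, so that $|kF_n \triangle F_n|/|F_n| \to 0$ for each $k \in \Gamma$.

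The candidate is $\xi_n := |F_n|^{-1/2}\sum_{g\in F_n} u_g e_B u_g^* \in L^2(\inp{M}{B})$. The projections $p_g := u_g e_B u_g^*$ are mutually orthogonal in $\inp{M}{B}$ (they correspond to the Hilbert space decomposition $L^2(M) = \bigoplus_g u_g L^2(B)$), and each has $\hat\tau$-trace $1$, so the $\xi_n$ are unit vectors. For the trace condition, I would expand $\inp{x\xi_n}{\xi_n}$ on elements of the form $x = bu_h$ using $e_B y e_B = E_B(y)e_B$ together with the Jones trace identity $\hat\tau(m_1 e_B m_2) = \tau(m_1 m_2)$: after sliding $b$ and the various $u_g$'s past $e_B$ using the crossed-product covariance, all off-diagonal $g \neq g'$ contributions vanish and the surviving diagonal sum reduces to $\inp{x\xi_n}{\xi_n} = \tau(x)$ for every $n$.

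For the commutator estimate, take $y = cu_k$; rewriting $y\xi_n$ and $\xi_n y$ via the same covariance manipulation, $y\xi_n - \xi_n y$ becomes a signed sum of vectors $c u_g e_B u_{g^{-1}k}$ indexed precisely by the symmetric difference $k F_n \triangle F_n$. The Jones trace identity again shows these vectors are pairwise orthogonal with squared $L^2$-norm equal to $\tau(c^*c)$, so $\|y\xi_n - \xi_n y\|_2^2 = \tau(c^*c)\cdot|k F_n \triangle F_n|/|F_n| \to 0$ by the Følner property, and a $\|\cdot\|_2$-density argument extends the vanishing to all $y \in M$. The ``in particular'' clause is then immediate from part (e) of Theorem \ref{thm-rel-amen} together with the standard $M$-$M$-bimodular identification $L^2(\inp{M}{B}) \cong L^2(M)\otimes_B L^2(M)$. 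The main obstacle is purely bookkeeping: tracking how each $e_B$ interacts with group elements and elements of $B$ so that every expression collapses cleanly via the Jones trace identity.
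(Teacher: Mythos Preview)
Your proposal is correct and follows exactly the same approach as the paper: the paper also constructs the vectors $\xi_n = |F_n|^{-1/2}\sum_{g\in F_n} u_g e_B u_g^*$ from a F{\o}lner sequence, asserts they satisfy Theorem~\ref{thm-rel-amen}(d), and then invokes part~(e) for the weak-containment statement. You have simply supplied the verification that the paper omits, and your computations are correct.
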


\begin{proof}
    Let $F_n$ be a sequence of Følner sets for $\Gamma$ that increase to $\Gamma.$ The vectors $\xi_n = \frac{1}{\sqrt{|F_n|}} \sum_{g\in K_n}u_ge_Bu_g^*$ satisfy the conditions of Theorem \ref{thm-rel-amen}(d).

     By Theorem \ref{thm-rel-amen}(e), we know that $_{B\rtimes\Gamma}L^2(B\rtimes\Gamma)_{B\rtimes\Gamma} \prec _{B\rtimes\Gamma}L^2(\inp{B\rtimes\Gamma}{B})_{B\rtimes\Gamma}$.
\end{proof}

\begin{lem}
\label{lem-rel-amen-bimods}
    Let $\Gamma$ be an amenable group, $(B,\tau)$ a tracial von Neumann algebra, and $\Gamma\curvearrowright B$ a trace-preserving action. Set $M = B\rtimes\Gamma$. Then $ L^2(\inp{M}{B\rtimes\Lambda})\prec L^2(\inp{M}{B})$
\end{lem}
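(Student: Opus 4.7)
The plan is to construct an explicit sequence $(\xi_n)$ of unit vectors in $L^2(\inp{M}{B})$ whose $M$-$M$-bimodule matrix coefficients converge to those of the canonical cyclic vector $e_{B\rtimes\Lambda}\in L^2(\inp{M}{B\rtimes\Lambda})$; since the $M$-$M$-span of $e_{B\rtimes\Lambda}$ is $\|\cdot\|_{\hat\tau}$-dense in the latter, this suffices for weak containment. Because $\Lambda\le\Gamma$ is amenable (as a subgroup of the amenable $\Gamma$), I would fix a F\o lner sequence $(F_n)$ in $\Lambda$ and set
$$\xi_n := \frac{1}{\sqrt{|F_n|}}\sum_{g\in F_n} u_g e_B u_g^* \in \inp{M}{B}.$$
The identities $e_B u_s e_B = \delta_{s,e}\, e_B$ and $\hat\tau(ae_Bb) = \tau(ab)$ immediately show that the summands are pairwise orthogonal projections each of $\hat\tau$-mass $1$, so $\xi_n$ is a unit vector in $L^2(\inp{M}{B})$.

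To compute the coefficient $\inp{bu_k\,\xi_n\, cu_\ell}{\xi_n}_{\hat\tau}$ for $b,c\in B$ and $k,\ell\in\Gamma$, I would expand into a double sum over $g,h\in F_n$ and use $u_g^* b u_k u_h = \sigma_{g^{-1}}(b)u_{g^{-1}kh}$ together with $E_B(u_s)=\delta_{s,e}$ to force $h=k^{-1}g$. The remaining single sum simplifies to
$$\inp{bu_k\,\xi_n\, cu_\ell}{\xi_n}_{\hat\tau}=\frac{|F_n\cap kF_n|}{|F_n|}\,\delta_{k\ell,\,e}\,\tau(b\sigma_k(c)).$$
For $k\in\Lambda$ the F\o lner condition drives the ratio to $1$, while for $k\notin\Lambda$ the sets $F_n\subset\Lambda$ and $kF_n\subset k\Lambda$ are disjoint cosets so the ratio is identically $0$. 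In either case the limit equals $\tau(E_{B\rtimes\Lambda}(bu_k)cu_\ell)=\inp{bu_k\,e_{B\rtimes\Lambda}\,cu_\ell}{e_{B\rtimes\Lambda}}_{\hat\tau}$, exactly the target coefficient.

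Since the vectors of the form $bu_k$ have $\|\cdot\|_2$-dense linear span in $M$ and $\|x\xi_n y\|_{\hat\tau}\le\|x\|\|y\|$ uniformly in $n$, the convergence of matrix coefficients extends to arbitrary $x,y\in M$ by a routine $\|\cdot\|$-approximation. By cyclicity of $e_{B\rtimes\Lambda}$, the standard matrix-coefficient criterion (e.g.\ the bimodule version of Fell's absorption, as used in Section \ref{subsect-popa-thm}) then gives $L^2(\inp{M}{B\rtimes\Lambda})\prec L^2(\inp{M}{B})$ as $M$-$M$-bimodules.

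The one step requiring genuine care is the double-sum collapse, since two effects must align: the algebraic identity $E_B(u_{g^{-1}kh})=\delta_{g^{-1}kh,e}$ must produce the correct factor $\delta_{k\ell,e}\tau(b\sigma_k(c))$, and the F\o lner ratio must genuinely vanish (not merely in the limit) for $k\notin\Lambda$, so that the limiting functional is exactly $\inp{\cdot\, e_{B\rtimes\Lambda}}{e_{B\rtimes\Lambda}}_{\hat\tau}$ rather than a strictly larger state. Everything else is a direct application of F\o lner averaging in the Jones construction and the cyclicity of the canonical vector.
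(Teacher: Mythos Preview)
Your argument is correct and takes a genuinely different route from the paper. The paper's proof is purely bimodule-theoretic: it invokes Lemma~\ref{lem-rel-amen-crossed-prod} (applied to $\Lambda$ in place of $\Gamma$) to obtain $L^2(B\rtimes\Lambda)\prec L^2(B\rtimes\Lambda)\otimes_B L^2(B\rtimes\Lambda)$ as $(B\rtimes\Lambda)$-bimodules, then uses the identification $L^2(\inp{M}{N})\cong L^2(M)\otimes_N L^2(M)$ together with monotonicity of weak containment under the Connes tensor product to pass from $\otimes_{B\rtimes\Lambda}$ to $\otimes_B$. Your approach instead unpacks this into a single explicit computation: the same F{\o}lner vectors that witness relative amenability in Lemma~\ref{lem-rel-amen-crossed-prod} are placed directly inside $\inp{M}{B}$ and shown to reproduce the matrix coefficients of the cyclic vector $e_{B\rtimes\Lambda}$. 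The paper's version is more modular and isolates that only the amenability of $\Lambda$ matters; yours is more self-contained and makes the mechanism transparent, in particular the exact (not merely asymptotic) vanishing of $|F_n\cap kF_n|$ for $k\notin\Lambda$ is a pleasant feature.

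One small wrinkle in your extension step: you invoke $\|\cdot\|_2$-density of $\mathrm{span}\{bu_k\}$ but pair it with the operator-norm bound $\|x\xi_n y\|_{\hat\tau}\le\|x\|\,\|y\|$, and these do not match. The clean fix is to observe that $\hat\tau(\xi_n^*a^*a\xi_n)=\tau(a^*a)=\hat\tau(a^*\xi_n^*\xi_n a)$ for all $a\in M$ (immediate from $\hat\tau(xe_By)=\tau(xy)$ and the fact that $\sqrt{|F_n|}\,\xi_n$ is a projection), so both $a\mapsto a\xi_n$ and $a\mapsto\xi_n a$ are $\|\cdot\|_2$-isometries uniformly in $n$; the analogous identities hold for $e_{B\rtimes\Lambda}$, and then $\|\cdot\|_2$-density genuinely suffices to extend the convergence to all $x,y\in M$.
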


\begin{proof}
    By Lemma \ref{lem-rel-amen-crossed-prod}, we have that as $B\rtimes\Lambda$-$B\rtimes\Lambda$-bimodules, $$L^2(B\rtimes\Lambda)\prec L^2(\inp{B\rtimes\Lambda}{B}) = L^2(B\rtimes\Lambda) \otimes_B L^2(B\rtimes\Lambda).$$ Therefore as $M$-$M$-bimodules we have
    \begin{align*}
        L^2(\inp{M}{B\rtimes\Lambda}) &= L^2(M)\otimes_{B\rtimes\Lambda}L^2(M)\\
        &=L^2(M) \otimes_{B\rtimes\Lambda} L^2(B\rtimes\Lambda) \otimes_{B\rtimes\Lambda} L^2(M) \\
        &\prec L^2(M) \otimes_{B\rtimes\Lambda} L^2(B\rtimes\Lambda) \otimes_B L^2(B\rtimes\Lambda) \otimes_{B\rtimes\Lambda} L^2(M) \\
        &= L^2(M) \otimes_B L^2(M)\\
        &= L^2(\inp{M}{B}). 
    \end{align*} \qedhere
\end{proof}

Finally, we would like to record an important characterization of weak containment in the non-tracial setting. We will need it in the semi-finite setting when working with the Jones construction. The following Theorem appears as Corollary A.2 in \cite{bannon2020full}.

\begin{thm}
\label{thm-weak-contain-bmo}
    Let $N\subset M$ be von Neumann algebras. There is a (not necessarily normal) conditional expectation from $M$ onto $N$ if and only if $_NL^2(N)_N\prec _NL^2(M)_N.$
\end{thm}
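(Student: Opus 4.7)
The plan is to prove the two implications separately, using the standard correspondence between $N$-bimodular completely positive maps $M \to N$ and pointed Hilbert $N$-$N$-bimodules. To handle the (possibly) non-tracial setting I would work in Haagerup standard forms: fix a faithful normal state $\varphi$ on $N$ with cyclic vector $\widehat\varphi \in L^2(N)$, and use the modular conjugation $J_M$ for a standard form of $M$ to realize the right action of $N$ on $L^2(M)$ as $J_M N J_M \subset B(L^2(M))$.

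For the $(\Rightarrow)$ direction, given a (possibly non-normal) conditional expectation $\Phi \colon M \to N$, I would construct a state $\omega$ on the $C^*$-algebra $C^*(M, J_M N J_M) \subset B(L^2(M))$ by setting, on algebraic combinations, $\omega(m \cdot J_M b^* J_M) = \varphi(\Phi(m)\, b)$ (with a modular correction in the right slot when $\varphi$ is non-tracial). The $N$-bimodularity of $\Phi$, combined with standard Tomita--Takesaki computations, ensures this is well-defined, positive and contractive. Restricting to the subalgebra $C^*(N, J_M N J_M)$, the state $\omega$ exactly reproduces the canonical bimodule state on $C^*(N, J_N N J_N) \subset B(L^2(N))$ determined by $\widehat\varphi$, and by a standard GNS argument this lift is equivalent to the norm inequality $\|\pi_{L^2(N)}(x)\| \leq \|\pi_{L^2(M)}(x)\|$ for $x \in N \odot N^{op}$, giving $_NL^2(N)_N \prec _NL^2(M)_N$.

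For the $(\Leftarrow)$ direction, weak containment of bimodules provides, via Hahn--Banach applied to the GNS characterization, a state $\omega$ on $B(L^2(M))$ whose restriction to $C^*(N, J_M N J_M)$ realizes the canonical bimodule state $a \cdot J_M b^* J_M \mapsto \varphi(ab)$. Restricting $\omega$ to $M \subset B(L^2(M))$ yields a state $\psi$ on $M$, and for each $m \in M$ I would define $\Phi(m) \in N$ by the uniqueness prescription $\varphi(\Phi(m)\, b) = \omega(m \cdot J_M b^* J_M)$ for every $b \in N$ (possible because $\varphi$ is faithful). The $N$-bimodularity of the bimodule state translates into the $N$-bimodularity of $\Phi$, complete positivity of $\Phi$ follows from positivity of $\omega$ (applied to matrix amplifications of the definition), and unitality is immediate, making $\Phi$ the desired conditional expectation.

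The main obstacle is precisely the lack of normality: since $\omega$ need not be a vector state, one cannot realize it by a vector in $L^2(M)$ (as one does in the tracial Theorem \ref{thm-rel-amen}), and must instead manipulate $\omega$ in the dual space $B(L^2(M))^*$ via Hahn--Banach throughout. A secondary difficulty is the consistent handling of left and right $N$-actions across $L^2(N)$ and $L^2(M)$, which is automatic in the tracial case (where $J$ collapses to the involution) but in general requires careful use of Tomita--Takesaki theory to align the bimodule states.
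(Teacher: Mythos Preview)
The paper does not prove this statement; it simply records it as Corollary~A.2 of \cite{bannon2020full}. So there is nothing in the paper to compare against, and the question is whether your sketch stands on its own.

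Your $(\Rightarrow)$ direction is sound in outline: a conditional expectation is a unital completely positive $N$-bimodular map, and the associated state on $C^*(N,J_MNJ_M)$ reproduces the vector state of $\widehat\varphi\in L^2(N)$, which is exactly what weak containment asks for. The modular bookkeeping you flag is real but routine.

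Your $(\Leftarrow)$ direction has a gap. You define $\Phi(m)\in N$ by the prescription $\varphi(\Phi(m)\,b)=\omega(m\,J_Mb^*J_M)$ and justify it by saying this is ``possible because $\varphi$ is faithful''. Faithfulness gives \emph{uniqueness}, not \emph{existence}: you still have to show that the functional $\psi_m\colon b\mapsto\omega(m\,J_Mb^*J_M)$ on $N$ is of the form $\varphi(c\,\cdot\,)$ for some $c\in N$. In the tracial case this can be salvaged, because for $0\le m\le 1$ the operators $m$ and $J_MbJ_M$ commute, giving $0\le\psi_m\le\tau$ on $N_+$; hence $\psi_m$ is normal and the Radon--Nikodym theorem for traces produces $c\in N$ with $\psi_m=\tau(c\,\cdot\,)$. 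In the genuinely non-tracial setting the domination $0\le\psi_m\le\varphi$ still forces normality, but the available Radon--Nikodym theorems (Sakai, Connes) deliver $\psi_m=\varphi(h\,\cdot\,h)$ or a cocycle derivative, not a one-sided multiplier $\varphi(c\,\cdot\,)$. Your remark about ``modular corrections'' does not explain how you would get around this.

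The argument in \cite{bannon2020full} avoids this difficulty entirely. Weak containment yields a $*$-homomorphism from $C^*\big(\pi_{L^2(M)}(N\odot N^{op})\big)$ onto $C^*\big(\pi_{L^2(N)}(N\odot N^{op})\big)$; Arveson's theorem extends it to a unital completely positive map $\Theta\colon B(L^2(M))\to B(L^2(N))$. The original $C^*$-algebra, and in particular $J_MNJ_M$, lies in the multiplicative domain of $\Theta$. Since $M$ commutes with $J_MNJ_M$, one gets $\Theta(m)\in (J_NNJ_N)'=N$ for every $m\in M$, and $\Theta|_M$ is automatically $N$-bimodular, hence a conditional expectation. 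This route never attempts to define $\Phi(m)$ via a pointwise state formula, which is precisely where your argument runs into trouble.
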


\section{Examples of Generalized Bernoulli Crossed Products}
\label{sect-examples}

In this section we provide a handful of examples of generalized Bernoulli crossed products in order to highlight the peculiarities that can arise. In particular, there are free, nonamenable actions that give rise to non-prime \twoone factors, are very far from having the VV property, or have very poorly behaved stabilizer subgroups.

\begin{example}
\label{eg-two-orbits}
Let $\Gamma_i\curvearrowright I_i$, $i=\{1,2\}$, be free actions with infinite orbits. Then $\Gamma := \Gamma_1\times \Gamma_2 \curvearrowright I:= I_1 \sqcup I_2$ defined by $(g_1,g_2)\cdot j := g_i j$ where $j\in I_i$ is also a free action with infinite orbits. Then if $B$ is any von Neumann algebra, $M = B^I\rtimes \Gamma = (B^{I_1}\rtimes \Gamma_1) \overline{\otimes} (B^{I_2}\rtimes \Gamma_2).$ It is therefore certainly far too much to hope that every nonamenable action gives rise to a prime \twoone factor.
\end{example}

\begin{example}
\label{eg-supernonVV}
Let $\Gamma_0$ be a nonamenable group and $\Gamma = \bigoplus_{n\in\bN}\Gamma_n$ where $\Gamma_n\simeq \Gamma_0$ for all $n.$ Set $I_n = \Gamma_n$ as sets and let $I = \sqcup_{i\in\bN}I_n $. There is an action of $\Gamma$ on $I$ where $(g_1,g_2,\ldots) \cdot i = g_ki $ for all $i\in I_k.$ $\Gamma$ then acts on $I$ amenably since any sequence $(i_n)_n$ with $i_n\in I_n$, interpreted as a sequence in $\ell^2(I),$ is a sequence of almost invariant vectors. Also, for each $i\in I,$ $\mathrm{Stab}(i) = \oplus_{k\neq j}\Gamma_k$ where $i\in I_j$.

Consider a sequence $(x_n)$ in  $L\Gamma$ with the following property: for each $n,$ $x_n$ is in $L\Gamma_n.$ That is, $x_n = 1\otimes\cdots\otimes1\otimes a_n\otimes1\otimes1\otimes\cdots$ for some $a_n\in L\Gamma_n.$ Then for all $i\in I,$ $\lim \|x_n - E_{L(\mathrm{Stab}(i))}(x_n)\|_2\to0,$ as for only one $n$ we have that $L\Gamma_n$ does not lie inside of $L\mathrm{Stab}(i)$. Furthermore, such sequences asymptotically commute with $L\Gamma$: without loss of generality, $y\in L\Gamma = \otimes_{n\in\bN}L\Gamma_n$ is a pure, finitely supported tensor. But then for $n$ sufficiently large, $[x_n,y] = 0.$ 

This example illustrates that the VV property can fail miserably -- here, the algebra $(L\Gamma)' \cap \bigcap_{i\in I}(L(\mathrm{Stab}(i)))^\omega \subset (L\Gamma)^\omega$ contains all such elements $(x_n)\in (L\Gamma)^\omega$ as described above. On the contrary, for an action to be VV it must satisfy $\bC1 = (L\Gamma)' \cap \bigcap_{i\in I}(L(\mathrm{Stab}(i)))^\omega.$ \qedhere
\end{example}

\begin{example}
\label{eg-non-VV}
Let $\Gamma_0$ be a nonamenable group and $\Gamma = \oplus_{n\in\bN}\Gamma_n$ where $\Gamma_n\simeq \Gamma_0$ for all $n.$ Set $I_n = \Gamma_n$ as sets and set $I = \sqcup_{n\in\bN} I_n.$ Define the action of $\Gamma$ on $I$ in such a way that $\Gamma_n$ acts on $I_n$ on the left, and on $I_{n+1}$ on the right. I.e., if $g\in \Gamma_n$ then $g\cdot i = \begin{cases}gi, & i\in I_n \\ ig^{-1}, & i\in I_{n+1}\\ i, & \text{ otherwise}\end{cases}.$ 

We note that $\Gamma\curvearrowright I$ does not decompose into a direct sum of groups acting on a disjoint union of sets as above, so if $B$ is a nontrivial tracial von Neumann algebra, $M = B^I\rtimes\Gamma$ doesn't admit a tensor decomposition in the same way as above; however, it still is not prime. Indeed, $M$ here will be McDuff, as it will have nontrivial (and non-commuting) central sequences coming from $L\mathrm{Stab}(F)$, $F\subset I$ finite. Therefore the hyperfinite factor $R$ will be a tensor factor of $M$. This example again shows the necessity of the VV property, and that there are nontrivial ways for $M$ to be non-prime. \qedhere

\end{example}

\begin{example}

Consider the action $SL_3(\bZ[1/p]) \curvearrowright SL_3(\bZ[1/p])/SL_3(\bZ)$ by left multiplication of cosets. This is a faithful, transitive action with infinite orbits with the property that $\mathrm{Stab}(F) = \cap_{g\in F}gSL_3(\bZ)g^{-1}$ is finite index inside of $\mathrm{Stab(i)}$ for any $i\in F.$ This last property is simply saying that $SL_3(\bZ)$ is a commensurated subgroup of $SL_3(\bZ[1/p])$. $SL_3(\bZ[1/p])$ has Proeprty (T) so in particular it is not inner amenable, so this action also has the VV property.

This example illustrates that there isn't necessarily a distinction between the conditions $P\prec_M L(\mathrm{Stab}(F))$ for some $F\subset I$ finite and $P\prec_M L(\mathrm{Stab}(F))$ for all $F\subset I$ finite. It is unclear whether any tensor factor of $L(SL_3(\bZ[1/p]))$ can embed in $L(\mathrm{Stab(SL_3(\bZ))})$ for some $i\in I$. We note that $L(SL_3(\bZ[1/p]))$ is conjectured to be prime. \qedhere

\end{example}

\section{The Peculiar Case of a Rigid Tensor Factor}
\label{sect-rigidfactor}
In this section, we will prove the following theorem:

\begin{thm}
\label{thm-case1}
Let $\Gamma\curvearrowright I$ be a free action with infinite orbits of a countable group on a countable set. Let $(B,\tau)$ be a nontrivial tracial von Neumann algebra. Write $M = B^I\rtimes\Gamma$ and suppose $M = P\overline{\otimes} Q$ is a tensor decomposition. Set $\Tilde{B} = B * L\bZ$ and $\Tilde{M} = \Tilde{B}^I\rtimes\Gamma$. If $Q'\cap \Tilde{M}^\omega \subset M^\omega,$ then $P\prec_M L(\mathrm{Stab}(F))$ for every $F\subset I$ finite.
\end{thm}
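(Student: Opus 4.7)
The plan is a classical Popa-style deformation/rigidity argument exploiting the free product $\tilde{B} = B * L\bZ$. Following the free-product malleable deformation of Ioana--Peterson--Popa, I would first construct on $\tilde{B}$ a one-parameter family of trace-preserving automorphisms $(\alpha_t)_{t\in\bR}$ together with a symmetry $\beta$ (so that $\beta^2 = \mathrm{id}$, $\beta|_B = \mathrm{id}_B$, and $\beta \alpha_t = \alpha_{-t}\beta$), with $\alpha_t \to \mathrm{id}$ pointwise in 2-norm as $t \to 0$. Because $\bigotimes_i \alpha_t$ acts componentwise on $\tilde{B}^I$ and therefore commutes with every coordinate permutation $\sigma_g$, this extends to an $s$-malleable deformation $\tilde{\alpha}_t$ of $\tilde{M} = \tilde{B}^I \rtimes \Gamma$ that is trivial on $L\Gamma$, and hence on every $L(\mathrm{Stab}(F))$.

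Next, I would translate the hypothesis $Q' \cap \tilde{M}^\omega \subset M^\omega$ into uniform convergence $\tilde{\alpha}_t \to \mathrm{id}$ on the unit ball $(Q)_1$. The standard Popa spectral-gap contradiction does this: if uniform convergence failed, one would extract $q_n \in (Q)_1$ and $t_n \to 0$ with $\|\tilde{\alpha}_{t_n}(q_n) - q_n\|_2 \geq \epsilon > 0$; the transversality property of the $s$-malleable deformation, passed through the ultrapower, then manufactures a nonzero element of $Q' \cap \tilde{M}^\omega$ orthogonal to $M^\omega$, contradicting the hypothesis. With uniform rigidity of $Q$ in hand, I would invoke a Popa-style transfer theorem: since $M = P \overline{\otimes} Q$ and $\tilde{\alpha}_t$ is essentially trivial on $Q$, a commutator argument forces $\tilde{\alpha}_t(p)$ to cluster near the fixed-point algebra $L\Gamma$ for every unitary $p \in P$, and Popa's intertwining theorem yields $P \prec_M L\Gamma$.

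To strengthen the conclusion from $L\Gamma$ to $L(\mathrm{Stab}(F))$ for each finite $F \subset I$, I would rerun the argument with a restricted deformation $\tilde{\alpha}^F_t$ that acts as identity on $\tilde{B}^F$ and as $\bigotimes_{i \in I \setminus F}\alpha_t$ elsewhere. Since $\mathrm{Stab}(F)$ fixes $F$ pointwise, $\tilde{\alpha}^F_t$ is a well-defined $s$-malleable deformation of the subalgebra $\tilde{B}^I \rtimes \mathrm{Stab}(F) \subset \tilde{M}$, and its fixed-point algebra contains $L(\mathrm{Stab}(F))$. Running the rigidity plus transfer argument in this restricted setting, and combining with the information already obtained globally, should allow one to pin down the intertwining to land inside $L(\mathrm{Stab}(F))$ itself rather than a larger corner.

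The main obstacle is precisely this refinement step. The deformation $\tilde{\alpha}^F_t$ does not extend to all of $\tilde{M}$ (elements of $\Gamma \setminus \mathrm{Stab}(F)$ mix coordinates of $F$ with those of $I \setminus F$), so I must adapt the rigidity transfer to a proper subalgebra and then extract the intertwining inside the ambient $M$, presumably via a Jones basic construction relative to $\tilde{B}^I \rtimes \mathrm{Stab}(F)$ and careful bimodule bookkeeping. The factor-level transversality and the ultrapower-based spectral gap still apply, but verifying that the rigidity hypothesis on $Q$ restricts properly and that the resulting partial isometry actually intertwines into $L(\mathrm{Stab}(F))$ rather than the larger fixed-point algebra $\tilde{B}^F \overline{\otimes} B^{I \setminus F} \rtimes \mathrm{Stab}(F)$ is where the bulk of the technical work should live.
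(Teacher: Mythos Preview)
Your spectral-gap step has the roles of $P$ and $Q$ reversed. The hypothesis $Q'\cap\tilde M^\omega\subset M^\omega$ says that anything in $\tilde M$ approximately commuting with $Q$ is close to $M$; since $P\subset Q'$, it is $\alpha_t(p)$ for $p\in(P)_1$ that nearly commutes with $Q$ (via $[\alpha_t(p),u]=\alpha_t[p,\alpha_{-t}(u)]\approx 0$ for $u\in Q$) and is therefore forced near $M$, after which transversality gives $\alpha_t\to\mathrm{id}$ uniformly on $(P)_1$, not on $(Q)_1$. Your proposed contradiction, extracting $q_n\in(Q)_1$ and claiming $\alpha_{t_n}(q_n)$ contributes to $Q'\cap\tilde M^\omega$, fails because $\alpha_{t_n}(q_n)$ has no reason to commute with $Q$. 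Once this is corrected, the output of the deformation/rigidity step (via the Ioana--Popa--Vaes argument the paper cites) is $P\prec_M B^F\rtimes\mathrm{Stab}(F)$ for \emph{some} finite $F$, not $P\prec_M L\Gamma$ directly.

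The more substantial gap is your refinement strategy. The restricted deformation $\tilde\alpha^F_t$ does not extend to $\tilde M$, as you acknowledge, and the rigidity hypothesis lives in $\tilde M^\omega$, so there is no evident way to rerun spectral gap inside $\tilde B^I\rtimes\mathrm{Stab}(F)$. The paper does not attempt this; instead it proceeds by two purely intertwining-by-bimodules arguments, each of which exploits the tensor decomposition $M=P\overline\otimes Q$ in an essential way. First (Theorem~\ref{thm-exists-StabF}): assuming $P\prec_M B^F\rtimes\mathrm{Stab}(F)$ but $P\not\prec_M L(\mathrm{Stab}(F))$, one obtains unitaries $u_n\in\theta(pPp)$ with $\|E_{L(\mathrm{Stab}(F))}(xu_ny)\|_2\to 0$, and a Fourier-coefficient computation shows that conjugation by $u_n$ asymptotically annihilates the span of $\{bu_g: gF\cap F=\emptyset\}$; one then contradicts this using that $v^*Qv$ (with $Q=P'\cap M$) must meet that span nontrivially. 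Second (Theorem~\ref{thm-all-StabF}): a parallel argument upgrades $P\prec_M L\Gamma$ to $P\prec_M L(\mathrm{Stab}(F))$ for \emph{every} finite $F$. Neither step uses any further deformation; the real content of the theorem is this combinatorial exploitation of the crossed-product Fourier expansion together with $Q=P'\cap M$, and your proposal does not anticipate it.
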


\begin{remark}
    Theorem \ref{thm-case1} also holds if we assume $\Gamma\curvearrowright I$ is merely faithful if we add the assumption that $B$ is diffuse. These conditions ensure that $\Gamma \curvearrowright B^I$ is a properly outer action (see Lemmas \ref{lem-tensor-outer} and \ref{lem-tensor-outer-free}) and thus that $M$ is a \twoone factor.
\end{remark}

We use the following deformation introduced in \cite{ioana2007rigidity}. We reproduce its construction for clarity and completeness. Let $v\in L\bZ$ be the canonical unitary generator and choose $h\in L\bZ$ self-adjoint with $v = e^{ih}$. Denote by $\alpha_t^0\in \mathrm{Aut}(\Tilde{B})$ the inner automorphism $\alpha_t^0 = \mathrm{Ad}(e^{ith})$. Set $\alpha_t = \bigotimes_{i\in I} \alpha_t^0$. Extend $\alpha_t$ to an automorphism of $\Tilde{M}$ by setting $\alpha_t(u_g) = u_g.$ We observe that $\alpha_t\to\mathrm{id}_{\Tilde{M}}$ pointwise in $\|\cdot\|_2$ as $t\to 0$.

We also define $\beta_0 \in \mathrm{Aut}(\Tilde{B})$ by $\beta_0(v)=v^*$ and $\beta_0(b)=b$ for $b\in B.$ $\beta = \otimes_{i\in I} \beta_0$ is then an automorphism of $\Tilde{B}^I.$ $\beta$ extends to an automorphism of $\Tilde{M}$ by setting $\beta(u_g) = u_g.$ We note that $\beta|_M = \mathrm{id}_M$, $\beta^2 = \mathrm{id},$ and $\beta\circ\alpha_t\circ\beta=\alpha_{-t}$. 

The following is a transversal lemma due to Popa (Lemma 2.1 of \cite{popa2008superrigidity}):

\begin{lem}
\label{popa-transv}
    In the context of the previous three paragraphs, for all $x\in M,$ $$\|\alpha_{2t}(x)-x\|_2 \leq 2\|\alpha_t(x) - E_M(\alpha_t(x))\|_2.$$
\end{lem}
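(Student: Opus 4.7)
The plan is to use $\beta$ as a reflection that flips $\alpha_t$ to $\alpha_{-t}$ while fixing $M$ pointwise, and then combine this with the fact that $\beta$ preserves the 2-norm on $\Tilde{M}$.

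First I would set $y := \alpha_t(x) \in \Tilde{M}$ and rewrite the left-hand side in terms of $y$. Using $\alpha_t\circ\alpha_t = \alpha_{2t}$, together with the identities $\beta|_M = \id_M$, $\beta^2 = \id$, and $\beta\circ\alpha_t\circ\beta = \alpha_{-t}$, I compute
\begin{equation*}
\beta(y) = \beta(\alpha_t(x)) = \alpha_{-t}(\beta(x)) = \alpha_{-t}(x),
\end{equation*}
so that $\alpha_t(\beta(y)) = x$. Consequently
\begin{equation*}
\alpha_{2t}(x) - x = \alpha_t(y) - \alpha_t(\beta(y)) = \alpha_t\bigl(y - \beta(y)\bigr).
\end{equation*}
Since $\alpha_t$ is a trace-preserving automorphism, it is $\|\cdot\|_2$-isometric, and hence $\|\alpha_{2t}(x)-x\|_2 = \|y - \beta(y)\|_2$.

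Next I would insert $E_M(y)$ to split $y - \beta(y)$ into two pieces. Because $E_M(y) \in M$ and $\beta$ fixes $M$ pointwise, $\beta(E_M(y)) = E_M(y)$, so
\begin{equation*}
y - \beta(y) = \bigl(y - E_M(y)\bigr) - \bigl(\beta(y) - E_M(y)\bigr) = \bigl(y - E_M(y)\bigr) - \beta\bigl(y - E_M(y)\bigr).
\end{equation*}
Applying the triangle inequality and the fact that $\beta$ is trace-preserving (hence 2-norm isometric) gives
\begin{equation*}
\|y-\beta(y)\|_2 \leq \|y-E_M(y)\|_2 + \|\beta(y-E_M(y))\|_2 = 2\,\|y - E_M(y)\|_2.
\end{equation*}
Combining this with the previous equality yields exactly
\begin{equation*}
\|\alpha_{2t}(x)-x\|_2 \leq 2\,\|\alpha_t(x) - E_M(\alpha_t(x))\|_2.
\end{equation*}

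There is no real obstacle here; the only thing to be careful about is verifying that $\beta$ really does preserve the trace on $\Tilde{M}$ (so that it is isometric in $\|\cdot\|_2$) and that it fixes every element of $M$, not just the generators. Both follow directly from the construction: $\beta_0$ is trace-preserving on $\Tilde{B}$ since it maps the Haar unitary $v$ to $v^*$ and fixes $B$, so the infinite tensor product $\beta = \otimes_{i\in I}\beta_0$ is trace-preserving on $\Tilde{B}^I$, and the extension by $\beta(u_g) = u_g$ keeps it trace-preserving on $\Tilde{M}$; the fact that $\beta|_M = \id_M$ follows because $\beta$ already fixes $B^I$ pointwise and fixes all $u_g$.
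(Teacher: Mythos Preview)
Your argument is correct and is exactly the standard proof of Popa's transversality inequality; the paper itself does not reproduce the proof but simply cites Lemma~2.1 of \cite{popa2008superrigidity}, whose argument is the one you wrote out.
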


We now reproduce Popa's spectral gap argument  \cite{popa2008superrigidity}. We use the notation of the proof of Theorem 2 in \cite{chifan2010ergodic}.

\begin{lem}
\label{spectral-gap}
    $\alpha_t \to \mathrm{id}$ uniformly in $\|\cdot\|_2$ on $(P)_1.$\qedhere
\end{lem}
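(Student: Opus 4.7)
The plan is Popa's spectral gap argument, with the hypothesis $Q' \cap \Tilde{M}^\omega \subset M^\omega$ supplying the rigidity input. I argue by contradiction: suppose the conclusion fails, so that there exist $\epsilon > 0$, a sequence $t_n \to 0$, and elements $x_n \in (P)_1$ with $\|\alpha_{t_n}(x_n) - x_n\|_2 \geq \epsilon$. Setting $s_n = t_n/2$ and applying the transversal Lemma \ref{popa-transv} at parameter $s_n$ produces
\[
\epsilon \leq \|\alpha_{2 s_n}(x_n) - x_n\|_2 \leq 2\|\alpha_{s_n}(x_n) - E_M(\alpha_{s_n}(x_n))\|_2,
\]
so that $y_n := \alpha_{s_n}(x_n)$ satisfies $\|y_n - E_M(y_n)\|_2 \geq \epsilon/2$ for every $n$.

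Next, I verify that the bounded sequence $(y_n)_n$ is asymptotically $Q$-central in $\Tilde{M}$. Because $M = P \overline{\otimes} Q$, every $q \in Q$ commutes with $x_n$, and applying the $\Tilde{M}$-automorphism $\alpha_{s_n}$ yields $[\alpha_{s_n}(q), y_n] = 0$. Combined with $\|y_n\|_\infty = \|x_n\|_\infty \leq 1$ and the pointwise convergence $\alpha_{s_n}(q) \to q$ in $\|\cdot\|_2$, the standard commutator estimate gives
\[
\|[q, y_n]\|_2 \leq 2\|q - \alpha_{s_n}(q)\|_2 \cdot \|y_n\|_\infty \longrightarrow 0
\]
for every $q \in Q$, so the class of $(y_n)_n$ represents an element of $Q' \cap \Tilde{M}^\omega$.

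The hypothesis $Q' \cap \Tilde{M}^\omega \subset M^\omega$ then yields a bounded sequence $(z_n) \subset M$ with $\lim_{n\to\omega}\|y_n - z_n\|_2 = 0$. Since $E_M$ is $\|\cdot\|_2$-contractive and fixes each $z_n \in M$, the triangle inequality gives $\|y_n - E_M(y_n)\|_2 \leq 2\|y_n - z_n\|_2$, so $\lim_{n\to\omega}\|y_n - E_M(y_n)\|_2 = 0$, contradicting the lower bound $\epsilon/2$ obtained from the transversal lemma. The only delicate point in the argument is checking that the asymptotic $Q$-centrality of $(y_n)_n$ survives the passage from $M$ into $\Tilde{M}$, and this is handled directly by the automorphism property of $\alpha_{s_n}$ together with the pointwise convergence of the deformation; everything else is a routine Popa-style spectral gap contradiction, once the rigidity hypothesis $Q' \cap \Tilde{M}^\omega \subset M^\omega$ is taken as a black box.
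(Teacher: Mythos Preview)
Your proof is correct and follows essentially the same spectral gap argument as the paper. The only cosmetic difference is that the paper runs the argument directly in $\ee$--$\delta$ form (extracting finitely many unitaries $u_1,\ldots,u_n\in Q$ and a $\delta>0$ from the inclusion $Q'\cap\Tilde{M}^\omega\subset M^\omega$, then showing $\|\alpha_t(x)-E_M(\alpha_t(x))\|_2\leq\ee$ for small $t$ and applying the transversal lemma), whereas you phrase the same idea as a contradiction via a sequence $(y_n)$ landing in $Q'\cap\Tilde{M}^\omega$; the underlying mechanism---commutator estimate plus transversality---is identical.
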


\begin{proof}
    Let $\ee>0.$ Since $Q'\cap \Tilde{M}^\omega\subset M^\omega,$ there are unitaries $u_1,\ldots,u_n \in Q$ and $\delta>0$ such that if $x\in (\Tilde{M})_1$ satisfies $\|[u_i,x]\|_2 \leq \delta$ for all $1\leq i\leq n$ then $\|x-E_M(x)\|_2 \leq \ee.$ Let $t_0>0$ be such that for all $t\in [0,t_0]$ and all $1\leq i\leq n$ we have $\|\alpha_t(u_i)-u_i\|_2\leq\delta/2$. Therefore, for all $x\in (P)_1$ and $1\leq i\leq n$ we have that
    $$\|[u_i,\alpha_t(x)]\|_2 \leq 2\|\alpha_t(u_i)-u_i\|_2 + \|[\alpha_t(u_i),\alpha_t(x)]\|_2 \leq \delta. $$
    Therefore we have that $\|\alpha_t(x) - E_M(\alpha_t(x))\|_2 \leq \ee.$ Lemma \ref{popa-transv} then implies \newline $\|\alpha_{2t}(x)-x\|_2 \leq 2\ee$ for all $x\in (P)_1$ and $t\in[0,t_0]$, showing that $\alpha_t$ converges uniformly to $\mathrm{id}$ on $(P)_1.$
\end{proof}

In particular, Lemma \ref{spectral-gap} says that there is $n\geq1$ such that $\tau(u^*\alpha_{1/2^n}(u)) \geq 1/2$ for all $u\in \cU(P).$ The argument from Theorem 4.2 of \cite{ioana2013class}, in particular from the second to last paragraph on page 248 to the second to last paragraph on page 249, can then be used to prove that $P\prec_M B^F\rtimes\mathrm{Stab}(F)$ for some $F\subset I$ finite. Note that the assumption that $\mathrm{Stab}(J)$ is finite for $J$ sufficiently large is not used until the argument in the final paragraph of page 249 of \cite{ioana2013class}. We record the results of the discussion of this section up to this point as a theorem:

\begin{thm}
\label{thm-BF-StabF}
    Let $B$ be a nontrivial, tracial von Neumann algebra, $\Gamma\curvearrowright I$ be a free action with infinite orbits, and $M = B^I\rtimes\Gamma$. Set $\Tilde{B} = B * L\bZ$ and $\Tilde{M} = \Tilde{B}^I\rtimes\Gamma$. If $M = P\overline{\otimes}Q$ is a tensor decomposition such that $Q'\cap \Tilde{M}^\omega \subset M^\omega,$ then $P\prec_M B^F\rtimes \mathrm{Stab}(F)$ for some $F\subset I$ finite (and possibly empty).
\end{thm}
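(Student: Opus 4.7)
The plan is to combine the spectral gap estimate from Lemma \ref{spectral-gap} with the intertwining machinery for malleable deformations of generalized Bernoulli actions from \cite{ioana2013class}.

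First, I would translate the uniform $\|\cdot\|_2$-convergence of $\alpha_t$ on the unit ball of $P$ into a uniform inner-product lower bound. By Lemma \ref{spectral-gap}, choose $n\geq 1$ with $\|\alpha_{1/2^n}(u) - u\|_2 \leq 1$ for every $u\in\mathcal{U}(P)$; polarization then gives $\mathrm{Re}\,\tau(u^*\alpha_t(u)) \geq 1/2$ uniformly on $\mathcal{U}(P)$, where $t = 1/2^n$. This is the precise quantitative input needed to run the intertwining argument.

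Next, I would apply the argument of Theorem 4.2 of \cite{ioana2013class}, specifically the passage from the second-to-last paragraph of p.~248 to the second-to-last paragraph of p.~249. Conceptually, $L^2(\tilde M)$ admits a Fourier-type decomposition indexed by finite coordinate patterns of free-product letters coming from $\tilde{B} = B * L\bZ$, and this decomposition is compatible with $\alpha_t$ (which acts coordinate-wise and trivially on $L\Gamma$). The transversal identity of Lemma \ref{popa-transv} combined with the uniform inner-product lower bound forces, after averaging suitably over $\mathcal{U}(P)$, that a positive proportion of the mass of $\alpha_t(u)$ localizes in the sector supported on some finite $F\subset I$. This produces a non-zero $P$-central positive element of finite Jones trace in $\inp{M}{B^F\rtimes\mathrm{Stab}(F)}$, and applying Popa's Theorem \ref{thm-popa-fundamental}(b) delivers $P\prec_M B^F\rtimes\mathrm{Stab}(F)$.

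The principal obstacle is the coordinate Fourier bookkeeping that pins down the finite set $F$ and shows non-vanishing of the averaged positive element; this is the combinatorial core of Ioana's argument. The statement of this theorem stops just short of a stronger intertwining such as $P\prec_M L(\mathrm{Stab}(F))$, which would require the final paragraph of p.~249 of \cite{ioana2013class}, where additional finiteness hypotheses on $\mathrm{Stab}(J)$ for large $J$ are invoked; those extra ingredients will be supplied, in weaker form via the VV property, in Section \ref{sect-nonrigid-factor}.
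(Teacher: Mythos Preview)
Your proposal is correct and matches the paper's own argument essentially line for line: the paper likewise extracts from Lemma \ref{spectral-gap} an $n\geq 1$ with $\tau(u^*\alpha_{1/2^n}(u))\geq 1/2$ for all $u\in\cU(P)$, and then invokes precisely the passage of Theorem 4.2 in \cite{ioana2013class} from the second-to-last paragraph of p.~248 to the second-to-last paragraph of p.~249, noting that the finiteness hypothesis on $\mathrm{Stab}(J)$ enters only in the final paragraph of p.~249. One small inaccuracy in your closing remark: the upgrade to $P\prec_M L(\mathrm{Stab}(F))$ is not obtained via the VV property in Section \ref{sect-nonrigid-factor} but rather directly in this section through Theorems \ref{thm-exists-StabF} and \ref{thm-all-StabF}.
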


Using the fact that $P$ tensor factor of $M,$ we can in fact intertwine $P$ into just $L(\mathrm{Stab}(F))$:

\begin{thm}
\label{thm-exists-StabF}
    Let $B$ be a nontrivial tracial von Neumann algebra and $\Gamma\curvearrowright I$ a free action with infinite orbits. Let $F\subset I$ be a finite subset. If $M = P\overline{\otimes}Q = B^I\rtimes \Gamma$ is a tensor decomposition such that $P\prec_MB^F\rtimes\mathrm{Stab}(F)$, then $P\prec_M L(\mathrm{Stab}(F)).$
\end{thm}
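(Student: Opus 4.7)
My plan is to use the tensor factor structure $M = P \overline{\otimes} Q$ to refine the given intertwining $P \prec_M N$, where $N := B^F \rtimes \mathrm{Stab}(F)$, down to $P \prec_M L(\mathrm{Stab}(F))$. The first observation is that since $\mathrm{Stab}(F)$ fixes $F$ pointwise, it acts trivially on $B^F$, so $N = B^F \overline{\otimes} L(\mathrm{Stab}(F))$, with $B^F$ and $L(\mathrm{Stab}(F))$ commuting inside $M$.

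I would apply Theorem \ref{thm-popa-fundamental}(e) to $P \prec_M N$ to obtain projections $p \in P$, $q \in N$, a unital normal $*$-homomorphism $\theta \colon pPp \to qNq$, and a nonzero partial isometry $v \in pMq$ with $xv = v\theta(x)$ for all $x \in pPp$, satisfying $vv^* \in (pPp)' \cap pMp$ and $v^*v \in \theta(pPp)' \cap qMq$. Since $M = P \overline{\otimes} Q$ is a factor, $(pPp)' \cap pMp = pQp$, so $r := vv^* \in pQp$. Then $\mathrm{Ad}_v \colon qMq \to rMr$ is a $*$-isomorphism with $\mathrm{Ad}_v(\theta(x)) = v\theta(x)v^* = xvv^* = xr$ for $x \in pPp$, so $\mathrm{Ad}_v(\theta(pPp)) = rPr$. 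Because $r \in Q$ commutes with $P$, $rMr = rPr \overline{\otimes} rQr$ with $rPr$ as the left tensor factor, giving $(rPr)' \cap rMr = rQr$. Pulling back through $\mathrm{Ad}_{v^*}$ yields $\theta(pPp)' \cap qMq = v^*(rQr)v$ and equips $qMq$ with the tensor decomposition $qMq = \theta(pPp) \overline{\otimes} v^*(rQr)v$ (using that $\theta$ is injective, as $pPp$ is a factor and $\theta$ is unital and nonzero).

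The heart of the argument is then to show $\theta(pPp) \prec_N L(\mathrm{Stab}(F))$, whose composition with the original $P \prec_M N$ yields the desired $P \prec_M L(\mathrm{Stab}(F))$ by transitivity of intertwining. Since $\theta(pPp) \otimes 1 \subseteq qNq \subseteq \theta(pPp) \overline{\otimes} v^*(rQr)v$ and $\theta(pPp)$ is a factor, the standard factor absorption principle yields $qNq = \theta(pPp) \overline{\otimes} S$ with $S := \theta(pPp)' \cap qNq \subseteq v^*(rQr)v$. Comparing this decomposition with the intrinsic factorization $qNq = qB^Fq \overline{\otimes} qL(\mathrm{Stab}(F))q$ (valid after reducing to $q$ of product form, a standard maneuver) and invoking Popa-style intertwining results for subfactors of tensor products of $\mathrm{II}_1$ factors should force $\theta(pPp) \prec_{qNq} qL(\mathrm{Stab}(F))q$, modulo ruling out the alternative $\theta(pPp) \prec_{qNq} qB^Fq$.

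The main obstacle is this last step, specifically eliminating the alternative $\theta(pPp) \prec_{qNq} qB^Fq$, which by transitivity would force $P \prec_M B^F \subseteq B^I$. Ruling this out would use that a tensor factor $P$ of the crossed product $M = B^I \rtimes \Gamma$ with $\Gamma \curvearrowright I$ free and of infinite orbits cannot intertwine into the coefficient algebra $B^I$, as such an intertwining is incompatible with the factoriality of $M$ and the tensor decomposition $M = P \overline{\otimes} Q$. Making this argument precise is the technical crux, and it is where the hypotheses on $\Gamma \curvearrowright I$ and the tensor-factor structure of $M$ interact most decisively.
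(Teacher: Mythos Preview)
Your approach diverges substantially from the paper's, and it has genuine gaps. The most serious is the step where you invoke ``Popa-style intertwining results for subfactors of tensor products'' to obtain the dichotomy $\theta(pPp) \prec_{qNq} qL(\mathrm{Stab}(F))q$ or $\theta(pPp) \prec_{qNq} qB^Fq$ from the existence of two tensor decompositions of $qNq$. No such general principle exists: knowing $qNq = \theta(pPp)\,\overline{\otimes}\,S = qB^Fq\,\overline{\otimes}\,qL(\mathrm{Stab}(F))q$ is a unique-factorization question, and the known results of that type require strong structural hypotheses (solidity, primeness, absence of property~$\Gamma$, etc.) on the tensor factors that are simply not available for an arbitrary $B^F$ or $L(\mathrm{Stab}(F))$. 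There is also a technical problem one step earlier: pulling back the decomposition of $rMr$ through $\mathrm{Ad}_{v^*}$ yields a decomposition of $v^*vMv^*v$, not of $qMq$, and since $v^*v$ need not lie in $N$ you cannot cut $N$ down by it and still apply the Ge--Kadison splitting. Finally, the obstacle you flag---ruling out $P\prec_M B^F$---is not just a technicality. With $B$ an arbitrary nontrivial tracial von Neumann algebra (in particular, possibly a large II$_1$ factor), there is no evident mechanism forcing $P\not\prec_M B^F$ from factoriality and the tensor structure alone.

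The paper's argument avoids all of this by working directly with the crossed-product Fourier expansion. Assuming by contradiction that $\theta(pPp)\not\prec_{N} L(\mathrm{Stab}(F))$ (arranged via Vaes's refinement of Popa's theorem), one obtains unitaries $u_n\in\theta(pPp)$ whose $L(\mathrm{Stab}(F))$-expectations vanish asymptotically. A coefficient computation (Claim~A) shows that conjugation by $u_n$ sends the subspace $\cK=\overline{\mathrm{span}}\{bu_g: b\in B^I,\ gF\cap F=\emptyset\}$ weakly to zero. A separate averaging argument using infinite orbits (Claim~B) shows that the projection onto $\cK$ does not kill $v^*vMv^*v$, hence not $v^*Qv$ either (using left $N$-modularity of that projection and $v^*Pv\subset N$). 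Since the $u_n$ commute with $v^*Qv$, combining these yields $\|v^*yv\|_2<\|v^*yv\|_2$ for some $y\in Q$, a contradiction. The argument uses the explicit structure of $B^I\rtimes\Gamma$ at every step rather than any abstract tensor-splitting principle.
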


\begin{proof}
Suppose towards a contradiction that $P\not\prec_M L\mathrm{Stab}(F)$. By Remark 3.8 of Vaes \cite{vaes2008explicit} there are projections $p\in P$ and $q\in B^F\rtimes \mathrm{Stab}(F)$ and a *-homomorphism $\theta:pPp \to q(B^F\rtimes\mathrm{Stab}(F)) q$ and a nonzero partial isometry $v\in pMq$ such that $xv=v\theta(x)$ for all $x\in pPp$, $vv^* \in pPp'\cap pMp = pQ, $ $v^*v \in \theta(pPp)'\cap qMq$ and $\theta(pPp) \not\prec_{B^F\rtimes \mathrm{Stab}(F)} L\mathrm{Stab}(F).$ 

Hence there are unitaries $u_n\in \theta(pPp)$ such that $\|E_{L\mathrm{Stab}(F)}(xu_ny)\|_2 \to 0$ for all $x,y\in B^F\rtimes \mathrm{Stab}(F).$ 

We now define $L := \{g\in\Gamma : gF\cap F= \emptyset\}$ and prove the following claim:

\underline{\textbf{Claim A:}} For all $x\in \overline{\mathrm{span}\{bu_g: b\in B^I, \ g\in L\}}^{\|\cdot\|_2}$ and $y\in L^2(M),$  $$\inp{u_nxu_n^*}{y}\to0.$$ 

It suffices to prove the claim on a $\|\cdot\|_2$-densely spanning set, so without loss of generality let $x = (\otimes_{i\in I}x_i)u_k$ where $x_i$ is a unitary for all $i\in I$, $x_i\neq 1$ for finitely many $i\in I,$ and $k\in L.$ Write $u_n = \sum_{g\in\mathrm{Stab}(F)}u_n^gu_g,$ where $u_n^g\in B^F.$ Let $y = (\otimes_{i\in I}y_i)u_\ell$ where $y_i$ is a unitary for all $i\in I$, $y_i\neq1$ for finitely many $i\in I,$ and $\ell\in \Gamma.$ A computation shows that:

\begin{equation}
\label{eqn-fourier}
\begin{split}
    \tau(u_nxu_n^*y^*) &= \sum_{g,h\in\mathrm{Stab}(F)}\tau(u_n^g(\otimes_{i\in I}x_{g^{-1}i})\sigma_{gkh^{-1}}(u_n^h)^*(\otimes_{i\in I}y^*_{\ell hk^{-1}g^{-1}i})u_{gkh^{-1}\ell^{-1}}) \\
    &= \sum_{g\in\mathrm{Stab}(F)\cap \ell\mathrm{Stab}(F)k^{-1}}\tau(u_n^g(\otimes_{i\in I}x_{g^{-1}i})\sigma_{\ell}(u_n^{\ell^{-1}gk})^*(\otimes_{i\in I} y_i^*))
\end{split}
\end{equation}
Equation \ref{eqn-fourier} followed by noting that the trace was only nonzero when $gkh^{-1}\ell^{-1} = e$, so that $\ell = gkh^{-1} \in \mathrm{Stab}(F)\cdot L\cdot\mathrm{Stab}(F) = L.$ Therefore $F$ and $\ell F$ are disjoint for such $\ell$ and we have that the right hand side of Equation \ref{eqn-fourier} is equal to: 

\begin{align*}
    \sum_{g\in\mathrm{Stab}(F)\cap \ell\mathrm{Stab}(F)k^{-1}}&\tau(u_n^g(\otimes_{i\in F}x_{i}y_i^*))\\
    &\tau((\otimes_{i\in \ell F}x_{g^{-1}i})\sigma_\ell(u_n^{\ell^{-1}gk})^*(\otimes_{i\in \ell F}y_i^*))\\
    & \tau((\otimes_{i\in I\setminus(F\cup \ell F)}x_{g^{-1}i}y_i^*))
\end{align*}

But note that since $|\tau(ab)|\leq\|a\|_2\|b\|_2$ and the $x_i$ and $y_i$ are unitaries, we get that for each $g\in\Gamma$ and $i\in I,$ $|\tau((\otimes_{i\in I\setminus(F\cup \ell F)}x_{g^{-1}i}y_i^*))|\leq 1$. Furthermore, $\inp{u_nxu_n^*}{y} =  \tau(u_nxu_n^*y^*)$ and hence 
\begin{align*}
|\inp{u_nxu_n^*}{y}| \leq \sum_{g\in\mathrm{Stab}(F)\cap \ell\mathrm{Stab}(F)k^{-1}}&|\tau((\otimes_{i\in F}y_i^*)u_n^g(\otimes_{i\in F}x_{i}))|\\
    &|\tau((\otimes_{i\in \ell F}y_i^*x_{g^{-1}i})\sigma_\ell(u_n^{\ell^{-1}gk})^*)|.
\end{align*}

We now use Cauchy-Schwarz to show get the following bound on $|\inp{u_nxu_n^*}{y}|$:

\begin{equation}
\label{ineq-CS}
\begin{split}
    |\inp{u_nxu_n^*}{y}| 
    \leq \Bigg(\sum_{g\in\mathrm{Stab}(F)\cap \ell\mathrm{Stab}(F)k^{-1}} & |\tau((\otimes_{i\in F}y_i^*)u_n^g(\otimes_{i\in F}x_{i}))|^2\Bigg)^{1/2}\\ 
    \Bigg(\sum_{g\in\mathrm{Stab}(F)\cap \ell\mathrm{Stab}(F)k^{-1}} & |\tau((\otimes_{i\in \ell F}y_i^*x_{g^{-1}i})\sigma_\ell(u_n^{\ell^{-1}gk})^*)|^2\Bigg)^{1/2} 
\end{split}
\end{equation}

We now observe that 
$$\|E_{L(\mathrm{Stab}(F))}((\otimes_{i\in F}y_i^*)u_n(\otimes_{i\in F}x_{i}))\|_2^2 = \sum_{g\in\mathrm{Stab}(F)}|\tau((\otimes_{i\in F}y_i^*)u_n^g(\otimes_{i\in F}x_{i}))|^2,$$ and the former goes to 0 by our choice of $u_n.$ A similar computation applies to the second factor in (\ref{ineq-CS}). Therefore $|\inp{u_nxu_n^*}{y}|\to0,$ proving Claim A.

Let $e\in \cB(L^2(M))$ be the projection from $L^2(M)$ onto $\cK = \overline{\mathrm{span}\{bu_g: b\in B^I, \ g\in L\}}^{\|\cdot\|_2}.$ Note that $e$ is a left $B^F\rtimes\mathrm{Stab}(F)$-modular map since $\mathrm{Stab}(F)L\subset L.$

\underline{\textbf{Claim B:}} For any nonzero projection $r\in M$, $e(rMr)\neq\{0\}$. 

Note that $e(rxr) = 0$ for all $x\in M$ if and only if $\inp{rxr}{\xi} = 0$ for all $\xi \in \cK$ if and only if $\inp{x}{r\xi r} =0$ for all $x,\xi$ as before. So it suffices to show that $r\xi r \neq 0$ for some $\xi\in \cK.$

We assume towards a contradiction that $r\cK r = 0.$
Note that elements of the form $u_gw$ where $g\in\Gamma$ and $w\in\cU(B^I)$ are a group that generate $M$ as a von Neumann algebra. Therefore the minimal 2-norm element of the closed convex hull of the $\{u_gwrw^*u_g^*:g\in\Gamma, w\in\cU(B^I)\}$ is $\tau(r)1$. Therefore we can take finitely many $t_i > 0$ that sum to 1, $g_i\in\Gamma$ and $w_i\in\cU(B^I)$ such that $\|\sum_{i=1}^nt_iu_{g_i}w_irw_i^*u_{g_i}^*-\tau(r)1\|_2 < \tau(r)^{3/2}$. Since $\Gamma\curvearrowright I$ has infinite orbits, there is $g\in \Gamma$ such that $g(\cup_{i=1}^ng_iF)\cap F = \emptyset.$ Hence $gg_i\in L$ for each $1\leq i\leq n.$ Then $\|\sum_{i=1}^nt_iru_gu_{g_i}w_irw_i^*u_{g_i}^*-ru_g\tau(r)\|_2 < \tau(r)^{3/2}$. But $u_{gg_i}w_i\in \cK$ so $ru_{gg_i}w_ir = 0$ for each $i.$ Hence $\tau(r)^{3/2} = \|ru_g\tau(r)\|_2  < \tau(r)^{3/2}$, a contradiction. This proves Claim B.

We now observe that $e(v^*Qv)\neq\{0\}$. Otherwise, we would have $e(v^*PQv) = e(v^*pPpQv) = e(\theta(pPp)v^*Qv) = \theta(pPp)e(v^*Qv) = \{0\}$. Taking a linear span and a 2-norm closure, this would imply $e(v^*L^2(M) v) = e(v^*vL^2(M) v^*v) = \{0\},$ contradicting Claim B. 

Take $y\in Q$ such that $e(v^*yv)\neq0.$ Claim A says that $\inp{u_ne(a)u_n^*}{b}\to 0$ for all $a,b\in L^2(M)$. Therefore 
\begin{align*}
    \|v^*yv\|_2^2 &= \inp{v^*yv}{v^*yv}\\
    &= \lim_{n\to\infty}\inp{u_n(v^*yv)u_n^*}{v^*yv} \\
    &=\lim_{n\to\infty}\inp{u_n(v^*yv-e(v^*yv))u_n^*}{v^*yv} + \lim_{n\to\infty}\inp{u_n(e(v^*yv))u_n^*}{v^*yv}\\
    &= \lim_{n\to\infty}\inp{u_n(v^*yv-e(v^*yv))u_n^*}{v^*yv}\\
    &\leq \liminf_{n\to\infty}|\inp{u_n(v^*yv-e(v^*yv))u_n^*}{v^*yv}|\\
    &\leq \|v^*yv-e(v^*yv)\|_2\|v^*yv\|_2\\
    &< \|v^*yv\|_2^2,
\end{align*}
which is a contradiction, completing the proof.
\end{proof}

Using similar techniques, we may show not only that $P$ embeds into a corner of some $L(\mathrm{Stab}(F))$ but also into a corner of \textit{every} $L(\mathrm{Stab}(F))$:

%Note that the above proof indicates that if $M = P\otimes Q$ and $P\prec_M N$ for some subalgebra $N\subset M,$ then $\theta(P)$ is a tensor factor of $qMq$ for some projection $q,$ and the implementing partial isometry gives the tensor factors: $v^*Pv$ and $v^*Qv.$ 
\begin{thm}
\label{thm-all-StabF}
Let $B$ be a nontrivial tracial von Neumann algebra and $\Gamma\curvearrowright I$ a free action with infinite orbits. Let $F\subset I$ be a finite subset. If $M = P\overline{\otimes}Q = B^I\rtimes \Gamma$ is a tensor decomposition such that $P\prec_M L\Gamma$, then $P\prec_M L(\mathrm{Stab}(F))$ for all $F\subset I$ finite.
\end{thm}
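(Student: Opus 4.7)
The plan is to follow the template of Theorem~\ref{thm-exists-StabF}. Fix a finite $F\subset I$ and assume toward a contradiction that $P\not\prec_M L(\mathrm{Stab}(F))$. Since $P\prec_M L\Gamma$ and $L(\mathrm{Stab}(F))\subset L\Gamma$, Remark~3.8 of Vaes~\cite{vaes2008explicit} furnishes projections $p\in P$, $q\in L\Gamma$, a $*$-homomorphism $\theta\colon pPp\to qL\Gamma q$, and a nonzero partial isometry $v\in pMq$ with $xv=v\theta(x)$ for $x\in pPp$, $vv^*\in (pPp)'\cap pMp=pQ$, $v^*v\in\theta(pPp)'\cap qMq$, and $\theta(pPp)\not\prec_{L\Gamma}L(\mathrm{Stab}(F))$. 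Popa's Theorem then produces unitaries $u_n\in\theta(pPp)$ with $\|E_{L(\mathrm{Stab}(F))}(au_nb)\|_2\to 0$ for all $a,b\in L\Gamma$; a direct Fourier expansion extends this to all $a,b\in M$. Writing $u_n=\sum_g c_n^g u_g$ with $c_n^g\in\bC$, the decay reads $\sum_{g\in s\,\mathrm{Stab}(F)\,t}|c_n^g|^2\to 0$ for every $s,t\in\Gamma$. Set $L=\{g\in\Gamma:gF\cap F=\emptyset\}$, $\cK=\overline{\mathrm{span}\{bu_g:b\in B^I,\ g\in L\}}^{\|\cdot\|_2}$, and let $e$ be the orthogonal projection onto $\cK$.

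The analog of Claim~B from Theorem~\ref{thm-exists-StabF}---that $e(rMr)\neq 0$ for every nonzero projection $r\in M$---transfers verbatim, depending only on infinite orbits of $\Gamma\curvearrowright I$. The core technical step is the analog of Claim~A: $\langle u_nxu_n^*,y\rangle\to 0$ for every $x\in\cK$ and $y\in L^2(M)$. By $\|\cdot\|_2$-density and the uniform contractivity of conjugation by $u_n$, it suffices to treat $x=bu_k$ and $y=du_\ell$ with $b=\bigotimes_{i\in F_b}b_i$, $d=\bigotimes_{i\in F_d}d_i$ pure tensors whose components lie in $B\ominus\bC 1$, $k\in L$, and $\ell\in\Gamma$. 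A Fourier computation yields
\[
\tau(u_nxu_n^*y^*)=\sum_{g\in\Gamma}c_n^g\,\overline{c_n^{\ell^{-1}gk}}\,\phi(g),\qquad \phi(g)=\prod_{i\in F_d}\tau(d_i^{-1}b_{g^{-1}i}),
\]
and tracelessness of the components forces $\phi(g)=0$ unless $gF_b=F_d$, so the sum is supported on a single coset of $\mathrm{Norm}(F_b)$. Since $[\mathrm{Norm}(F_b):\mathrm{Stab}(F_b)]<\infty$ and the decay hypothesis persists after enlarging $F$ to any finite $F''\supset F$ (because $L(\mathrm{Stab}(F''))\subset L(\mathrm{Stab}(F))$), a Cauchy--Schwarz estimate organized over $\mathrm{Stab}(F'')$-cosets, with $F''$ adapted to $F_b$ and $F_d$, drives the sum to zero.

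With Claims~A and~B in hand, the contradiction is assembled as in Theorem~\ref{thm-exists-StabF}. Applying Claim~B to the nonzero projection $vv^*\in pQ$ and combining with the intertwining $v^*x=\theta(x)v^*$ for $x\in pPp$ produces some $y\in Q$ with $e(v^*yv)\neq 0$. Since $P$ and $Q$ commute, the relation $xv=v\theta(x)$ yields $u_nv^*yvu_n^*=v^*yv$ for every $n$ (with $p_n\in\cU(pPp)$ satisfying $\theta(p_n)=u_n$), and therefore
\[
\|v^*yv\|_2^2=\lim_n\langle u_nv^*yvu_n^*,v^*yv\rangle.
\]
Decomposing $v^*yv=e(v^*yv)+(v^*yv-e(v^*yv))$ and applying Claim~A to the $\cK$-component gives the strict inequality $\|v^*yv\|_2^2<\|v^*yv\|_2^2$, the desired contradiction.

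The principal obstacle is Claim~A: in Theorem~\ref{thm-exists-StabF}, the $B^F$-valued coefficients $u_n^g$ coupled directly with the $F$-components of the pure tensors $x,y$ through the trace, whereas here the scalar coefficients $c_n^g\in\bC$ impose a weaker decay, and the natural support of $\phi$ (a coset of $\mathrm{Norm}(F_b)$) need not be a finite union of $\mathrm{Stab}(F)$-translates; the resolution requires a careful enlargement $F''\supset F\cup F_b\cup F_d$ together with the finite-index containment $\mathrm{Stab}(F_b)\subset\mathrm{Norm}(F_b)$. A secondary obstacle is that, unlike $B^F\rtimes\mathrm{Stab}(F)$, the algebra $L\Gamma$ does not leave $\cK$ invariant under left multiplication, so the clean step $e(\theta(pPp)\xi)=\theta(pPp)e(\xi)$ from Theorem~\ref{thm-exists-StabF} must be replaced by a more delicate argument when deducing $e(v^*Qv)\neq 0$.
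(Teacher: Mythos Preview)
Your overall architecture---set up the intertwining via Vaes's Remark~3.8, prove an asymptotic orthogonality Claim~A and a nonvanishing Claim~B, then combine them as in Theorem~\ref{thm-exists-StabF}---matches the paper. The crucial divergence is your choice of the subspace $\cK$. You reuse the $\cK$ of Theorem~\ref{thm-exists-StabF}, defined by the group condition $g\in L=\{g:gF\cap F=\emptyset\}$; the paper instead introduces, for each finite $G\supset F$, the subspace
\[
\cK_G=\overline{\mathrm{span}\{bu_g : b\in (B\ominus\bC)^G,\ g\in\Gamma\}}^{\|\cdot\|_2},
\]
putting the constraint on the $B$-support rather than on the group element. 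This single change resolves both obstacles you flag.

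For Claim~A, your reduction to pure tensors with traceless components does not span your $\cK$: it misses $x=u_k$ with $k\in L$, and for such $x$ the claim can genuinely fail (take $u_n\in L\Gamma$ going to $0$ weakly but asymptotically commuting with $u_k$; nothing in your decay hypothesis prevents this). Even for traceless $b$ with support $F_b$, your sum is supported on $\{g:gF_b=F_d\}$, a finite union of $\mathrm{Stab}(F_b)$-cosets, and enlarging to $F''=F\cup F_b$ does not help because $[\mathrm{Stab}(F_b):\mathrm{Stab}(F'')]$ need not be finite. In the paper's $\cK_G$ every element has traceless components on the fixed set $G$, so the support condition becomes $gG\subset H$, which is automatically a finite union of $\mathrm{Stab}(G)$-cosets---exactly matching the decay available for $u_n^G$.

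For the modularity issue you correctly identify, the paper's $e_G$ is \emph{right} $L\Gamma$-modular (since the defining condition is on $b$, not on $g$). Combined with the factorization $v^*pPp\,Qv=v^*Qv\,\theta(pPp)$ (using $[P,Q]=0$ and $xv=v\theta(x)$), one gets $e_G(v^*PQv)=e_G(v^*Qv)\theta(pPp)$, and the passage from Claim~B to $e_G(v^*Qv)\neq 0$ goes through cleanly. The price is that Claim~B must be reformulated: rather than holding for every nonzero projection, the paper shows that for the specific $r=v^*v$ there \emph{exists} some finite $G\supset F$ with $e_G(rMr)\neq 0$, via a new averaging argument using an invertible element of $(B\ominus\bC)^F$.
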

\begin{proof}
Fix F$\subset I$ finite. By Remark 3.8 of Vaes \cite{vaes2008explicit} there are projections $p\in P$ and $q\in L\Gamma$ and a *-homomorphism $\theta:pPp \to qL\Gamma q$ and a nonzero partial isometry $v\in pMq$ such that $xv=v\theta(x)$ for all $x\in pPp$, $vv^* \in pPp'\cap pMp = pQ, $ $v^*v \in \theta(pPp)'\cap qMq$ and if $A\subset L\Gamma$ is such that $P\not\prec_M A$ then $\theta(pPp) \not\prec_{L\Gamma} A.$

Suppose for a contradiction that $P\not\prec_M L(\mathrm{Stab}(F)).$ Then $P\not\prec_M L(\mathrm{Stab}(G))$ for all finite subsets $F\subset G\subset I.$ Hence there are unitaries $u_n^G\in \theta(pPp)$ such that $\|E_{L\mathrm{Stab}(G)}(xu_n^Gy)\|_2 \to 0$ for all $x,y\in L\Gamma.$ 

\underline{\textbf{Claim A:}} For all $x\in \overline{\mathrm{span}\{bu_g: b\in (B\ominus \bC)^G,\ g\in\Gamma\}}^{\|\cdot\|_2}$ and $y\in L^2(M),$ $$\inp{u_n^Gx(u_n^G)^*}{y}\to0.$$ 

To slightly ease the notation, we drop the superscript of $G$ on the $u_n$ until the end of the proof of this claim. It suffices to prove the claim on a $\|\cdot\|_2$-densely spanning set, so without loss of generality let $x = (\otimes_{i\in G}x_i)u_k$ where $\tau(x_i) = 0$ for all $i$ and let $y = (\otimes_{i\in H}y_i)u_\ell$ for some finite set $H\subset I$ and $k,\ell\in \Gamma.$ Set $x_i=y_j= 1$ for $i\not\in G$ and $j\not\in H.$ Write $u_n = \sum_{g\in\Gamma}u_n^gu_g.$ Then as in the previous theorem, we compute that 

$$\inp{u_nxu_n^*}{y} = \sum_{g\in\Gamma}u_n^g\overline{u_n}^{\ell^{-1}kg}\tau(\otimes_{i\in I}y_i^*x_{g^{-1}i}) $$

For $i\in G,$ the trace of $x_i$ is 0, so we see that $\tau(\otimes_{i\in I}y_i^*x_{g^{-1}i}) = \prod_{i \in gG}\tau(y_i^*x_{g^{-1}i})$ if $gG\subset H$ and is 0 otherwise. We therefore now know that 

$$\inp{u_nxu_n^*}{y} = \sum_{gG\subset H}u_n^g\overline{u_n}^{\ell^{-1}kg}\tau(\otimes_{i\in I}y_i^*x_{g^{-1}i}). $$

As in the previous theorem, we can bound $|\tau(\otimes_{i\in I}y_i^*x_{g^{-1}i})|$ by $\|x\|_2\|y\|_2.$ Therefore we know that $$|\inp{u_nxu_n^*}{y}| \leq \sum_{gG\subset H}|u_n^g\overline{u_n}^{\ell^{-1}kg}\tau(\otimes_{i\in I}y_i^*x_{g^{-1}i})| \leq \sum_{gG\subset H}|u_n^g\overline{u_n}^{\ell^{-1}kg}|\|x\|_2\|y\|_2.$$ Cauchy-Schwarz then implies this is at most $\sum_{gG\subset H}|u_n^g|^2\|x\|_2\|y\|_2.$

So it suffices to show that $\sum_{gG\subset H}|u_n^g|^2\to0.$ First, note that $\{g\in \Gamma: gG\subset H\}$ is equal to a finite union of left cosets of $\mathrm{Stab}(G),$ say $g_t\mathrm{Stab}(G)$ for $1\leq t\leq B.$ Then $\sum_{gG\subset H} |u_n^g|^2 = \sum_{t=1}^B\|E_{L\mathrm{Stab}(G)}(u_{g_t}^{-1}u_n)\|_2^2 \to 0.$ This proves Claim A.

Let $e_G\in \cB(L^2(M))$ be the projection from $L^2(M)$ onto $$\cK_G = \overline{\mathrm{span}\{bu_g: b\in (B\ominus \bC)^G,\ g\in\Gamma\}}^{\|\cdot\|_2}.$$ Note that $e$ is a right $L\Gamma$-modular map. 

\underline{\textbf{Claim B:}} Set $r = v^*v$, where $v$ is the partial isometry from the first paragraph of the proof of this theorem.  Then there exists $G\supset F$ finite such that $e_G(rMr)\neq\{0\}$.

Suppose for a contradiction the claim were false. Then $e_G(rMr) =\{0\}$ for all finite sets $G\subset I$ such that $G\supset F$. As in the proof of the previous theorem, this would imply that $r\cK_Gr = \{0\}$ for all such $G.$ Therefore $r\overline{\mathrm{span}\{\cK_G : G\supset F\}}^{\|\cdot\|_2} r = \{0\}.$ A straightforward computation shows that $$\overline{\mathrm{span}\{\cK_G : G\supset F\}}^{\|\cdot\|_2} = \overline{\mathrm{span}\{bu_g: b\in B^{I\setminus F} \overline{\otimes}(B\ominus \bC)^F,\ g\in\Gamma\}}^{\|\cdot\|_2}.$$ 

Now, pick an invertible element $v \in (B\ominus\bC)^F$. (Since $B$ is nontrivial, there is a projection $p\in B$ such that $t = \tau(p) \not\in\{0,1\}$, and we can take $v = p/t - (1-p)/(1-t).$ We remark that the element $v$ can be chosen to be unitary unless $B$ has a minimal central projection of trace greater than 1/2, as noted in Lemma A.1 of \cite{farah2014model}. 

Let $x$ be the element of minimal 2-norm in $$\overline{\mathrm{conv} \{u^*v^{-1}rvu : u\in \cU(B^{I\setminus F})\}}^{\|\cdot\|_2} .$$ Then $\tau(x) = \tau(r) \neq 0$ and $x\in (B^{I\setminus F})'\cap M \subset B^I$, with the last inclusion following from the fact the action of $\Gamma$ on $I$ is free. Now let $y$ be the element of minimal 2-norm in $$\overline{\mathrm{conv} \{u_g x u_g^* = \sigma_g(x) : g\in\Gamma\}}^{\|\cdot\|_2} .$$ Since $\sigma_g(x) \in B^I$ for all $g\in \Gamma,$ it is clear that $y\in B^I.$ But $y$ also commutes with $L\Gamma$ and so $y$ is a scalar, namely, $y = \tau(r)1.$ 

We now note that since $u_gu^*v^{-1}rvuu_g^* r = 0$ for each $g\in\Gamma$ and $u\in \cU(B^{I\setminus F}),$ we must have that $yr = 0$ too. But this implies that $r = 0,$ a contradiction. We have thus proved Claim B.

As in the proof of the previous theorem, we combine Claims A and B to contradict our assumption that $P \not\prec_M L(\mathrm{Stab}(F)).$ This completes the proof. \qedhere

\end{proof}

\begin{remark}
    To show that $(B^{I\setminus F})'\cap M \subset B^I$ as in the proof of Claim B of Theorem \ref{thm-all-StabF} above, it is sufficient to know either that (i) $B$ is nontrivial and $\Gamma\curvearrowright I$ is free or (ii) that $B$ is diffuse and $\Gamma\curvearrowright I$ is faithful. In either case, the action of $\Gamma$ on $B^I$ is properly outer by Lemmas \ref{lem-tensor-outer} and \ref{lem-tensor-outer-free}.
\end{remark}

We have now attained our goal for this section:

\begin{proof}[Proof of Theorem \ref{thm-case1}.]
    Combine Theorems \ref{thm-BF-StabF}, \ref{thm-exists-StabF}, and \ref{thm-all-StabF}.
\end{proof}

\section{Weak Bicentralization, Fullness, and Non-Rigid Tensor Factors}
\label{sect-nonrigid-factor}

In this section we will finish the proof of the main theorem. We will need to use the notion of weak bicentralization, first introduced in \cite{bannon2020full} in Lemma 3.4; the name was introduced in \cite{isono2019tensor} in Definition 4.1.

\begin{defn}
Let $M$ be a von Neumann algebra. We say that a subalgebra $N\subset M$ is \emph{weakly bicentralized} in $M$ if $_ML^2(\langle M,N\rangle)_M \prec _ML^2(M)_M.$
\end{defn}

\begin{prop}
\label{bicent-ultrapower}
Let $(M,\tau)$ be a tracial von Neumann algebra, and let $N\subset M$ be a subalgebra satisfying $N = (N'\cap M^\omega)'\cap M$ for some nonprincipal ultrafilter $\omega \in \beta\bN\setminus\bN.$ Then $N$ is weakly bicentralized in $M.$
\end{prop}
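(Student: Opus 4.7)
The plan is to verify $_M L^2(\langle M, N\rangle)_M \prec _M L^2(M)_M$ by approximating bimodule matrix coefficients of vectors in $L^2(\langle M, N\rangle)$ by matrix coefficients of vectors in $L^2(M)$, using the asymptotic centralizer $\mathcal{P} := N' \cap M^\omega$; by hypothesis, $\mathcal{P}' \cap M = N$. Via the canonical identification $L^2(\langle M, N\rangle) \cong L^2(M) \otimes_N L^2(M)$ of $M$-$M$-bimodules, a dense set of vectors is $\xi = \sum_i a_i \otimes_N b_i$ with $a_i, b_i \in M$, with attached bimodule state
\[
\omega_\xi(x \otimes y^{\mathrm{op}}) = \sum_{i,j} \tau\!\left(b_j^* E_N(a_j^* x a_i) b_i y\right).
\]
Since matrix coefficients on $L^2(M^\omega)$ are ultralimits of matrix coefficients on $L^2(M)$, one has $_M L^2(M^\omega)_M \prec _M L^2(M)_M$, and so it will suffice to approximate $\omega_\xi$ by bimodule states arising from vectors in $L^2(M^\omega)$.

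For each unitary $v \in \mathcal{U}(\mathcal{P})$, commutation of $v$ with $N$ makes the $M$-bimodular assignment $\sum_i a_i \otimes_N b_i \mapsto \eta_v := \sum_i a_i v b_i \in L^2(M^\omega)$ well-defined through $\otimes_N$; it yields
\[
\omega_{\eta_v}(x \otimes y^{\mathrm{op}}) = \sum_{i,j} \tau^\omega\!\left(v^* a_j^* x a_i v \cdot b_i y b_j^*\right).
\]
Via the identity $\tau^\omega(X y) = \tau(E_M(X) y)$ for $X \in M^\omega$, $y \in M$ (where $E_M: M^\omega \to M$ is the canonical trace-preserving conditional expectation), matching $\omega_\xi$ by convex combinations of $\omega_{\eta_v}$ reduces to the following Dixmier-type averaging claim: for every $Z \in M$ and $\varepsilon > 0$, there exist $v_1, \dots, v_k \in \mathcal{U}(\mathcal{P})$ and convex weights $\lambda_\ell$ satisfying $\|\sum_\ell \lambda_\ell\, v_\ell^* Z v_\ell - E_N(Z)\|_{L^2(M^\omega)} < \varepsilon$.

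By the tracial Dixmier property applied to the subalgebra $\mathcal{P} \subset M^\omega$, the closed convex hull $\overline{\mathrm{conv}}\{v^* Z v : v \in \mathcal{U}(\mathcal{P})\}^{\|\cdot\|_2}$ contains a unique element of minimal $2$-norm, the trace-preserving expectation $E_{\mathcal{P}' \cap M^\omega}(Z) \in \mathcal{P}' \cap M^\omega$. Since $E_N(Z) \in N \subset \mathcal{P}' \cap M^\omega$ is $\mathcal{P}$-fixed, the averaging claim becomes the identity $E_{\mathcal{P}' \cap M^\omega}(Z) = E_N(Z)$.

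The hard part is verifying this equality; it is here that the hypothesis $\mathcal{P}' \cap M = N$ must enter decisively. By uniqueness of the minimizer, the equality reduces to showing $E_{\mathcal{P}' \cap M^\omega}(Z) \in M$ for every $Z \in M$: once the minimizer is known to lie in $M$, the hypothesis forces it into $\mathcal{P}' \cap M = N$, and trace-preservation on $N$ then identifies it with $E_N(Z)$. The intended route is to represent each averaging unitary $v_\ell \in \mathcal{U}(\mathcal{P})$ by a sequence $(v_{\ell, n})_n$ of $N$-asymptotically central unitaries in $M$, carry out the Dixmier averaging at the $M$-level, and extract — via a diagonal argument on finite subsets of $M$, together with the rigidity provided by the hypothesis — an element of $M$ as the $\|\cdot\|_2$-limit of the $M$-level averages. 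Granting this, the required weak containment $_M L^2(\langle M, N\rangle)_M \prec _M L^2(M)_M$ follows by assembling convex combinations of the resulting bimodule matrix coefficients from $L^2(M^\omega)$, and thence from $L^2(M)$.
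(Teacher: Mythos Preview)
The paper does not give its own proof of this proposition; it simply cites Lemma~3.4 of Bannon--Marrakchi--Ozawa \cite{bannon2020full}. Your attempt is therefore an independent argument, and the overall strategy---identify $L^2(\langle M,N\rangle)\cong L^2(M)\otimes_N L^2(M)$, factor through $_ML^2(M^\omega)_M\prec {}_ML^2(M)_M$, and build approximating vectors $\eta_v=\sum_i a_ivb_i$ from unitaries $v\in\mathcal U(\mathcal P)$---is the right shape. The reduction to the Dixmier-type averaging statement is also correct: matching matrix coefficients really does come down to finding convex combinations $\sum_\ell\lambda_\ell v_\ell^*Zv_\ell$ close to $E_N(Z)$ for finitely many $Z\in M$ at a time.

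The problem is the step you explicitly flag with ``Granting this''. You reduce the averaging claim to showing $E_{\mathcal P'\cap M^\omega}(Z)\in M$ for every $Z\in M$, which is the same as asking that $M$ and $\mathcal P'\cap M^\omega$ form a commuting square inside $M^\omega$. Lemma~\ref{lem-commutant-is-commuting-square} would give this immediately \emph{if} $\mathcal P\subset M$, but $\mathcal P=N'\cap M^\omega$ typically lies outside $M$, so that lemma does not apply. Your proposed workaround---represent each $v_\ell$ by a sequence $(v_{\ell,n})_n$ of unitaries in $M$, average at level $n$, and ``extract an element of $M$ via a diagonal argument''---does not do what you want: the level-$n$ averages $w_n=\sum_\ell\lambda_\ell v_{\ell,n}^*Zv_{\ell,n}$ lie in $M$, but the only limit available is their class $(w_n)_n\in M^\omega$, which is exactly $\sum_\ell\lambda_\ell v_\ell^*Zv_\ell$ again. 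Nothing in the sketch forces that class into the constant sequences $M\subset M^\omega$, and a diagonal/reindexing argument cannot manufacture this. The phrase ``together with the rigidity provided by the hypothesis'' is precisely where a genuine mechanism is needed, and none is supplied. As written, the proof is incomplete at exactly the point where the hypothesis $N=(N'\cap M^\omega)'\cap M$ has to be used in earnest.
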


\begin{proof}
This is Lemma 3.4 in \cite{bannon2020full}. %The nontracial case is addressed in Proposition 4.2 in \cite{isono2019tensor}.
\end{proof}

We now give our first class of examples of weak bicentralization, which will be used later in Section \ref{sect-corollaries}.

\begin{prop}
\label{prop-wk-bicent-ez}
    Let $\Gamma\curvearrowright I$ be a group acting freely on a set, $B$ a nontrivial tracial von Neumann algebra, and let $M = B^I\rtimes\Gamma$ be the generalized Bernoulli crossed product. For a subset $G\subset I$, define $N_G = (Z(B)^{I\setminus G}\overline{\otimes} B^{G}).$ Let $F\subset I$ be finite. Then $N_F$ is weakly bicentralized in $M.$
\end{prop}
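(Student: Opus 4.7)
The plan is to invoke Proposition~\ref{bicent-ultrapower}: it suffices to show that $N_F = (N_F' \cap M^\omega)' \cap M$. The inclusion $\supseteq$ is trivial, and I focus on the reverse, which will split into two pieces corresponding to the Fourier decomposition $y = \sum_g y_g u_g$ of an element $y \in (N_F' \cap M^\omega)' \cap M$.

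The easy piece concerns $y_e$. Observe that $B^{I\setminus F} \subset N_F' \cap M$: at each coordinate $i \in I \setminus F$, the only ``contact'' between $N_F$ and $B$ at position $i$ is through $Z(B)$, which is central in $B$. Embedding $B^{I\setminus F}$ diagonally into $N_F' \cap M^\omega$, commutation of $y$ with $B^{I\setminus F}$ together with the $B^{I\setminus F}$-bimodularity of $E_{B^I}$ forces $y_e = E_{B^I}(y) \in (B^{I\setminus F})' \cap B^I$. A routine tensor-product calculation identifies this relative commutant with $Z(B)^{I\setminus F} \overline{\otimes} B^F = N_F$.

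The harder piece is to show $y_g = 0$ for $g \neq e$, and this is where constant sequences alone are insufficient --- for instance, when $B$ is finite-dimensional, ``flip'' elements in $B \otimes B$ paired with a suitable $u_g$ give nonzero elements of $(B^{I\setminus F})' \cap M$ lying outside $N_F$. The remedy is a genuine ultrapower central sequence. Fix a non-scalar unitary $u \in \mathcal U(B)$ (which exists because $B$ is nontrivial, and automatically satisfies $|\tau(u)| < 1$), pick pairwise distinct $i_n \in I \setminus F$ (possible because freeness of a nontrivial action makes $I$ infinite), and let $x_n \in M$ place $u$ at coordinate $i_n$ and $1$ elsewhere. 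Each $x_n \in B^{I\setminus F} \subset N_F' \cap M$, so $X = (x_n) \in N_F' \cap M^\omega$, and the commutation hypothesis gives $\lim_{n \to \omega}\|[x_n,y]\|_2 = 0$. The Fourier components of $[x_n,y] = \sum_g (x_n y_g - y_g \sigma_g(x_n)) u_g$ are mutually orthogonal across $g$, so the commutator bound descends to each $g$ separately. For $g \neq e$, freeness guarantees $g i_n \neq i_n$ for all but finitely many $n$, and $\{i_n, g i_n\}$ eventually escapes any finite set on which one $L^2$-approximates $y_g$; a tensor-factor calculation then yields
\[
\lim_{n \to \omega} \|x_n y_g - y_g \sigma_g(x_n)\|_2^2 \;=\; 2\bigl(1 - |\tau(u)|^2\bigr)\|y_g\|_2^2,
\]
which, combined with $|\tau(u)| < 1$, forces $y_g = 0$. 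Assembling the two pieces, $y = y_e \in N_F$.

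The main obstacle is really the design of $X$: the inequality $|\tau(u)| < 1$ is essential for the displayed identity to translate into $\|y_g\|_2 = 0$, and the placement of each $x_n$ at a coordinate in $I \setminus F$ is what keeps $X$ inside $N_F'$. The finite-dimensional flip-element example illustrates precisely why constant sequences cannot suffice, and this motivates the choice of sequence.
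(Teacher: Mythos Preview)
Your overall strategy (invoke Proposition~\ref{bicent-ultrapower} and analyze Fourier coefficients of $y\in (N_F'\cap M^\omega)'\cap M$) matches the paper's, and the $y_e$ step is correct; the paper uses the slightly larger $N_{I\setminus F}=Z(B)^F\overline\otimes B^{I\setminus F}$ in place of your $B^{I\setminus F}$, but this is immaterial. Two remarks on the $y_g$ step for $g\neq e$.

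First, the assertion ``freeness guarantees $gi_n\neq i_n$ for all but finitely many $n$'' is false as stated: freeness in this paper means each $g\neq e$ has infinitely many non-fixed points, but it may also have infinitely many fixed points. Since you chose the sequence $(i_n)$ once and for all, independently of $g$, it could happen that $\{n:gi_n=i_n\}\in\omega$, and then your displayed limit computation breaks down. The fix is immediate: for each $g\neq e$ separately, choose distinct $i_n=i_n(g)\in I\setminus F$ with $gi_n\neq i_n$ (freeness supplies infinitely many such indices outside the finite set $F$), and run your calculation for that $g$.

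Second, and more conceptually, your claim that constant sequences are insufficient is mistaken, and the flip example does not apply here. A flip-type witness $\phi u_g\in (B^{I\setminus F})'\cap M$ would require $g$ to permute a finite set of indices while fixing everything else in $I\setminus F$; such a $g$ has only finitely many non-fixed points and is excluded by freeness. In fact each of your $x_n$ already lies in $B^{I\setminus F}\subset N_F'\cap M$, so $y$ commutes with every $x_n$ \emph{exactly}, not merely in the ultralimit---the ultrapower packaging contributes nothing. The paper exploits precisely this: it shows $(N_{I\setminus F})'\cap M\subset N_F$ directly (a computation entirely inside $M$), using the very same elements $x_n$ one at a time and chosen depending on $g$, never passing to $M^\omega$. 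With the fix above your argument is correct and unwinds to the paper's.
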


\begin{proof}
    To show that $N_F$ is weakly bicentralized in $M,$ it suffices to notice that $N_{I\setminus F} \subset N_F'\cap M$ and show that $N_{I\setminus F}'\cap M\subset N_F$. This is because then we would have $N_F'\cap M^\omega \supset N_F'\cap M \supset N_{I\setminus F}$ and so $(N_F'\cap M^\omega)'\cap M \subset N_{I\setminus F}'\cap M \subset N_F.$ (The inclusion $N_F\subset (N_F'\cap M^\omega)'\cap M$ is automatic.) We then apply Proposition \ref{bicent-ultrapower}.

    So, let $x=\sum_g x_gu_g\in N_{I\setminus F}'\cap M$. Then $x_gy=\sigma_g(y)x_g$ for all $y\in N_{I\setminus F}$. Let us first prove that $x \in N_F$. To do so, we need to show that $x_g = 0$ for all $g\neq e$ and that $x_e \in N_F$.

    If $g\neq e$, then we can argue as in Lemma \ref{lem-tensor-outer-free}: take a unitary $u\in B\ominus \bC$ and take $i_1,i_2,\ldots$ in $I\setminus F$ such that $gi_n\neq i_n$ for all $n\in\bN.$ Then set $y_n^{i_n} = u$ and $y^k_n = 1$ for $k\neq i_n$, and $y_n = \otimes_{i\in I} y_n^i.$ Then as in Lemma \ref{lem-tensor-outer-free}, $\inp{x_gy_n}{\sigma_g(y_n)x_g}\to 0$, which forces $x_g = 0$ since $x_gy_n=\sigma_g(y_n)x_g$.

    If $g=e$, then $x_ey=yx_e$ for all $y\in N_{I\setminus F}$ implies that $x_e \in N_{I\setminus F}'\cap B^I = N_F,$ finishing the proof.
\end{proof}

To prove our main theorem, our next goal will be to show that a different class of subalgebras of $M = B^I\rtimes\Gamma$ are weakly bicentralized in $M$. We introduce some notation to describe these subalgebras:

\begin{notation}
\label{partition}
Let $\Gamma\curvearrowright I$ be a group acting on a set. Let $F\subset I$ be a finite subset. Write $F = F_1\sqcup F_2\sqcup\ldots\sqcup F_k$ as a disjoint union. Define $$\mathrm{Norm}(\{F_i\colon 1\leq i\leq k\}) = \{g\in \Gamma\colon gF_i = F_i \text{ for all }1\leq i\leq k\}. $$
Note that $\mathrm{Stab}(F) = \mathrm{Norm}(\{\{i\}_{i\in F}\})$ and $\mathrm{Norm}(F) = \mathrm{Norm}(\{F\})$. We will write $\cF =\{F_i\colon 1\leq i\leq k\}$ for short, and call $\cF$ a \emph{partition} of $F.$

Likewise, if $B$ is a von Neumann algebra, define $B^\cF = B^{\{F_i\colon 1\leq i\leq k\}} := (B^F)^{\mathrm{Norm}(\cF)}$. In other words, $B^\cF$ consists of the elements of $B^F$ which are invariant under the action of $\mathrm{Norm}(\cF).$
\end{notation}

\begin{thm}
\label{cor-weak-bicent}
Let $\Gamma\curvearrowright I$ be a group acting faithfully on a set, $B$ a diffuse tracial von Neumann algebra, and let $M = B^I\rtimes\Gamma$ be the generalized Bernoulli crossed product. Let $F\subset I$ be finite and $\cF$ be a partition of $F$ as in Notation \ref{partition}. Define $N = (Z(B)^F\overline{\otimes} B^{I\setminus F})\rtimes \mathrm{Norm}(\cF).$ Then $N$ is weakly bicentralized in $M.$
\end{thm}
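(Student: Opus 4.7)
The plan is to verify $N = (N' \cap M^\omega)' \cap M$ and then invoke Proposition~\ref{bicent-ultrapower}. The inclusion $N \subseteq (N' \cap M^\omega)' \cap M$ is automatic, so the content lies in showing that any $x \in (N' \cap M^\omega)' \cap M$ belongs to $N$. Writing the Fourier expansion $x = \sum_{g \in \Gamma} x_g u_g$ with $x_g \in B^I$, I will establish: (a) $x_g = 0$ whenever $g \notin H := \mathrm{Norm}(\cF)$; and (b) $x_g \in Y := Z(B)^F \,\overline{\otimes}\, B^{I \setminus F}$ for every $g \in H$. Together these give $x \in Y \rtimes H = N$.

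For (a), for each block $F_k \in \cF$ the plan is to exhibit a sequence of unitaries $U_n^{(k)} \in N' \cap M$ that forces $x_g = 0$ whenever $gF_k \neq F_k$. Pick unitaries $v_n \in B$ weakly tending to $0$ (available since $B$ is diffuse) and set $U_n^{(k)} := \bigotimes_{i \in F_k} v_n^{(i)} \in B^{F_k}$, where $v_n^{(i)}$ denotes $v_n$ in slot $i$ and $1$ elsewhere. Since $H$ fixes $F_k$ setwise and $\sigma_h$ merely permutes identical copies of $v_n$, each $U_n^{(k)}$ is $H$-invariant; it also commutes with $Z(B)^F$ (central in $B^F$) and with $B^{I \setminus F}$ (disjoint support), so $U_n^{(k)} \in N' \cap M$. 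Comparing Fourier coefficients in the literal equation $[U_n^{(k)}, x] = 0$ gives $U_n^{(k)} x_g \sigma_g(U_n^{(k)})^* = x_g$, and hence $\|x_g\|_2^2 = \tau(U_n^{(k)} x_g \sigma_g(U_n^{(k)})^* x_g^*)$ for every $n$. A pure-tensor computation analogous to the one in Lemma~\ref{lem-tensor-outer} shows the right-hand side vanishes as $n \to \infty$ whenever $F_k \neq g F_k$: a stray $v_n$ sits at some coordinate in $F_k \setminus gF_k$ (and conjugately in $gF_k \setminus F_k$), producing a factor of the form $\tau(v_n \cdot a) \to 0$. Thus $x_g = 0$ unless $g \in \mathrm{Norm}(F_k)$, and intersecting over $k$ yields (a).

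For (b), the key step is computing $(B^{\cF})' \cap B^I$. Let $G$ be the image of $H$ in $\mathrm{Sym}(F)$; this is a finite group acting on the finite tensor product $B^F$ by slot permutation, with fixed algebra $(B^F)^G = B^{\cF}$. Because $B$ is diffuse, Lemma~\ref{lem-tensor-outer} says every non-identity element of $G$ induces a properly outer automorphism of $B^F$; Lemma~\ref{lem-outer-commutant} then gives $(B^{\cF})' \cap B^F = Z(B^F) = Z(B)^F$. Tensoring with the commuting factor $B^{I \setminus F}$ produces $(B^{\cF})' \cap B^I = Z(B)^F \,\overline{\otimes}\, B^{I \setminus F} = Y$. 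From (a) we have $x = \sum_{g \in H} x_g u_g$; since $\sigma_g$ restricts to the identity on $B^{\cF}$ for $g \in H$, and $x$ commutes with $B^{\cF} \subset N' \cap M$, Fourier-coefficient comparison yields $x_g b = b x_g$ for all $b \in B^{\cF}$ and $g \in H$, so $x_g \in (B^{\cF})' \cap B^I = Y$.

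The main technical hurdle is step (b), specifically the application of Lemma~\ref{lem-outer-commutant} via the finite quotient $G$ to identify $(B^{\cF})' \cap B^F$, which accommodates the possibility that $H$ acts on $F$ with nontrivial kernel. Step (a) is a direct weak-null-unitary mixing computation already present in spirit in the proper-outerness arguments of the Preliminaries.
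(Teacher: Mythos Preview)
Your proof is correct and follows essentially the same route as the paper: both arguments reduce, via Proposition~\ref{bicent-ultrapower}, to showing $(B^{\cF})'\cap M\subset N$, split into the cases $g\in\mathrm{Norm}(\cF)$ and $g\notin\mathrm{Norm}(\cF)$, and handle these with Lemma~\ref{lem-outer-commutant} and a weak-null-unitary mixing computation respectively (your $U_n^{(k)}$ are exactly the paper's $y_n$, and both lie in $B^{\cF}$). Your explicit passage to the finite quotient $G\leq\mathrm{Sym}(F)$ before invoking Lemma~\ref{lem-outer-commutant} is a welcome clarification that the paper leaves implicit.
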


\begin{proof}

To show that $N$ is weakly bicentralized in $M,$ it suffices to notice that $B^\cF \subset N'\cap M$ and show that $(B^\cF)'\cap M\subset N$. This is because then we would have $N'\cap M^\omega \supset N'\cap M \supset B^\cF$ and so $(N'\cap M^\omega)'\cap M \subset (B^\cF)'\cap M \subset N.$ (The inclusion $N\subset (N'\cap M^\omega)'\cap M$ is automatic.) We then apply Proposition \ref{bicent-ultrapower}.

So, let $x=\sum_g x_gu_g\in (B^{\cF})'\cap M$. Then $yx_g=x_g\sigma_g(y)$ for all $y\in B^\cF$. Our goal is to show that $x\in N$. In other words, we wish to show that $x_g \in Z(B)^F\overline{\otimes} B^{I\setminus F}$ for all $g\in \mathrm{Norm}(\cF)$ and $x_g=0$ for $g\not\in \mathrm{Norm}(\cF).$

If $g\in \mathrm{Norm}(\cF)$ then $x_gy=yx_g$ for all $y\in B^\cF$. By Lemma \ref{lem-outer-commutant} we conclude that $x_g \in (B^\cF)' \cap B^I = (B^{\cF})'\cap (B^F \overline{\otimes} B^{I\setminus F})= Z(B)^F \overline{\otimes} B^{I\setminus F}$.

If $g\not\in \mathrm{Norm}(\cF)$ then this forces $x_g = 0$ by a similar argument to Lemma \ref{lem-tensor-outer}: take unitaries $u_n\to 0$ weakly in $B$, and pick $i\in F_m\in \cF$ such that $gi = j\not\in F_m$. Then set $y_n = \bigotimes_{k\in F_m}u_n \otimes \bigotimes_{k\not\in F_m}1 \in B^\cF.$ Then for all $x\in B^I,$ $\tau(xy_nx^*\sigma_g(y_n)^*)\to0$ as in Lemma \ref{lem-tensor-outer} and so it cannot be that $xy_n=\sigma_g(y_n)x$ for all $n$ and $x\neq0.$ 

Hence $N$ is weakly bicentralized in $M.$ \qedhere

\end{proof}

\begin{remark}
There are 2 main places the argument in the previous proposition breaks down if $B$ is not assumed to be diffuse. First, $\mathrm{Norm}(\cF)$ would not act properly outerly on $B^F$, and so $(B^\cF)'\cap B^F$ would not necessarily equal $Z(B)^F.$ However, this is only a minor issue since $(B^\cF)'\cap B^F$ is finite dimensional. 

More seriously, $B^\cF$ would be finite dimensional if $B$ were, and so $(B^\cF)'\cap M$ would be finite index in $M.$ In particular, $(B^\cF)'\cap M$ would not be contained in $N$, an infinite index subalgebra of $M.$ 
\end{remark}

We can now prove the following theorem: 

\begin{thm}
\label{thm-case2}
Let $\Gamma\curvearrowright I$ be a faithful, nonamenable action with the VV property of a countable group on a countable set. Let $(B,\tau)$ be a diffuse tracial von Neumann algebra. Write $M = B^I\rtimes\Gamma$ and suppose $M = P\overline{\otimes} Q$ is a tensor decomposition. Set $\Tilde{B} = B * L\bZ$ and $\Tilde{M} = \Tilde{B}^I\rtimes\Gamma$. If $Q'\cap \Tilde{M}^\omega \not\subset M^\omega,$ then there exists a finite subset $F\subset I$ such that $Q\prec_M (Z(B)^F\overline{\otimes}B^{I\setminus F})\rtimes \mathrm{Stab}(F)$.
\end{thm}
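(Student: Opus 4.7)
The strategy combines Popa's deformation/rigidity theory with the weak bicentralization established in Theorem~\ref{cor-weak-bicent} and the fullness of $M$ guaranteed by the VV property. The hypothesis $Q'\cap\tilde{M}^\omega\not\subset M^\omega$ produces a $\|\cdot\|_\infty$-bounded sequence $(x_n)\subset\tilde{M}$ that asymptotically commutes with $Q$ but stays a definite 2-norm distance away from $M$; setting $\xi_n := x_n - E_M(x_n)$ yields a nonzero, asymptotically $Q$-central family of vectors in the $M$-$M$-bimodule $\mathcal{H}:=L^2(\tilde{M})\ominus L^2(M)$. Equivalently, the trivial $Q$-$Q$-bimodule is weakly contained in $\mathcal{H}$ regarded as a $Q$-$Q$-bimodule.

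Next I would decompose $\mathcal{H}$ using the free product structure $\tilde{B}=B*L\bZ$. The standard Fock decomposition of $L^2(\tilde{B})\ominus L^2(B)$ as a $B$-$B$-bimodule produces an orthogonal sum of reduced alternating words, each weakly contained in the coarse $B$-$B$-bimodule. Taking the generalized tensor product over $I$, the $B^I$-$B^I$-bimodule $\mathcal{H}$ decomposes as $\bigoplus_F \mathcal{H}_F$, where $\mathcal{H}_F$ collects the words whose nontrivial $L\bZ$-letters occur exactly at the positions in the finite set $F\subset I$. Adjoining the $\Gamma$-action turns this into an $M$-$M$-bimodule decomposition. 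For each $F$ the piece $\mathcal{H}_F$ is weakly contained in $L^2\langle M, N_F\rangle$ as $M$-$M$-bimodules, where $N_F := (Z(B)^F\overline{\otimes} B^{I\setminus F})\rtimes\mathrm{Stab}(F)$, since the bimodular action of the commutant of $B^F$ modulo its center factors through $Z(B)^F\overline{\otimes} B^{I\setminus F}$, while the action of $\Gamma$ on vectors supported on $F$ is governed by $\mathrm{Stab}(F)$.

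Concentrating the $Q$-central vector into a single summand $\mathcal{H}_F$ — which is possible using fullness of $M$ together with a spectral-gap/averaging argument to rule out spreading across infinitely many $F$ — yields a $Q$-central state on $\langle M, N_F\rangle$ that is normal on $M$. By Theorem~\ref{thm-rel-amen} this gives $Q\lessdot_M N_F$. Theorem~\ref{cor-weak-bicent} applied with the partition $\cF=\{\{i\}\}_{i\in F}$ (for which $\mathrm{Norm}(\cF)=\mathrm{Stab}(F)$) shows that $N_F$ is weakly bicentralized in $M$; combining this with fullness of $M$ and the weak bicentralization/intertwining machinery of Isono-Marrakchi in \cite{isono2019tensor} (via Theorem~\ref{thm-weak-contain-bmo}) upgrades $Q\lessdot_M N_F$ to $Q\prec_M N_F$, completing the proof.

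The main obstacle will be the bimodule decomposition and weak containment step. In the classical Bernoulli setting treated by Isono-Marrakchi, stabilizers are trivial and the target reduces to $L\Gamma$, making the analysis cleaner; here one must verify precisely that reduced words supported on $F$ are absorbed by the bimodular action of $Z(B)^F\overline{\otimes} B^{I\setminus F}$ on both sides, and that the residual $\Gamma$-action localizes to $\mathrm{Stab}(F)$. Once the decomposition is established, the concentration of the $Q$-central vector and the upgrade from relative amenability to intertwining are by now standard consequences of the weak bicentralization framework for full factors.
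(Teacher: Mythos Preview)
Your strategy is essentially the paper's, but two steps need correction.

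First, your decomposition $\cH=\bigoplus_F\cH_F$ is only a $B^I$-$B^I$-bimodule decomposition; the $\Gamma$-action permutes the summands via $g\cdot\cH_F=\cH_{gF}$, so the $\cH_F$ are not $M$-$M$-subbimodules. As an $M$-$M$-bimodule, $\cH$ decomposes (Lemma~5 of \cite{chifan2010ergodic}) as $\bigoplus_{\cF}L^2(\langle M,A_{\cF}\rangle)$ indexed by \emph{partitions} $\cF$ of finite subsets $F\subset I$, with $A_{\cF}=B^{I\setminus F}\rtimes\mathrm{Norm}(\cF)$ --- note $\mathrm{Norm}(\cF)$, not $\mathrm{Stab}(F)$. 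This is precisely why Theorem~\ref{cor-weak-bicent} was proved for arbitrary partitions rather than only the discrete one.

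Second, the concentration step does not pass through relative amenability and then upgrade; it uses a two-sided weak containment. The paper enlarges each $A_{\cF}$ to the weakly bicentralized $N_{\cF}=(Z(B)^F\overline{\otimes}B^{I\setminus F})\rtimes\mathrm{Norm}(\cF)$: since $A_{\cF}\subset N_{\cF}$ one has $\langle M,N_{\cF}\rangle\subset\langle M,A_{\cF}\rangle$, so the conditional expectation $\bigoplus_{\cF}\langle M,A_{\cF}\rangle\to Q$ coming from $L^2(Q)\prec\cH$ via Theorem~\ref{thm-weak-contain-bmo} restricts to one from $\bigoplus_{\cF}\langle M,N_{\cF}\rangle$, giving $L^2(Q)\prec\bigoplus_{\cF}L^2(\langle M,N_{\cF}\rangle)$. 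Weak bicentralization of each $N_{\cF}$ gives the reverse $\bigoplus_{\cF}L^2(\langle M,N_{\cF}\rangle)\prec L^2(M)\prec L^2(Q)$ as $Q$-$Q$-bimodules. Fullness of $Q$ and Proposition~3.2 of \cite{bannon2020full} then upgrade this sandwich directly to strong containment $L^2(Q)\subset\bigoplus_{\cF}L^2(\langle M,N_{\cF}\rangle)$, producing a nonzero $Q$-central vector in some $L^2(\langle M,N_{\cF}\rangle)$ and hence $Q\prec_M N_{\cF}$ by Theorem~\ref{thm-popa-fundamental}. Since $\mathrm{Stab}(F)$ has finite index in $\mathrm{Norm}(\cF)$, the stated conclusion follows.
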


\begin{proof}

As $M$-$M$-bimodules, we have that $L^2(\Tilde{M})\ominus L^2(M) = \bigoplus_{A\in\cC} L^2(\langle M,A\rangle)$ where $\cC$ is a certain class of subalgebras of $M.$ By the proof of Lemma 5 in \cite{chifan2010ergodic}, $\cC$ consists of algebras of the form $A_\cF = B^{I\setminus F} \rtimes \mathrm{Norm}(\cF)$ where $F\subset I$ is finite and $\cF$ is a partition of $F$ as in Notation \ref{partition}.

By Lemma \ref{lem-commutant-is-commuting-square}, $Q'\cap \Tilde{M}^\omega$ and $M^\omega$ form a commuting square. Since $Q'\cap \Tilde{M}^\omega \not\subset M^\omega,$ Lemma \ref{lem-comm-sq-orthog} says there is $0\neq x = (x_n)_n \in Q'\cap \Tilde{M}^\omega$ such that $E_{M^\omega}(x) = 0.$ Scaling and perturbing the $x_n$ as needed, we obtain $x_n\in \Tilde{M}$ such that $\|x_n\|_2=1,$ $E_M(x_n) = 0,$ and $\lim_{n\to\omega} \|x_ny-yx_n\|_2 = 0$ for all $y\in Q.$ The $x_n$ are now almost-central vectors in $L^2(\Tilde{M})\ominus L^2(M)$, showing that $L^2(Q)\prec L^2(\Tilde{M})\ominus L^2(M)$.

By Theorem \ref{cor-weak-bicent}, for $A_\cF\in\cC$ we have that $N_\cF = A_\cF \overline{\otimes} Z(B)^F$ is weakly bicentralized in $M.$ I.e., $L^2(\langle M,N_\cF\rangle) \prec L^2(M)$ as $M$-$M$-bimodules.

Therefore $L^2(\langle M,N_\cF\rangle) \prec L^2(Q)\otimes L^2(P) \prec L^2(Q)$ as $Q$-$Q$-bimodules. 

By Theorem \ref{thm-weak-contain-bmo}, since $L^2(Q) \prec \bigoplus_{A_\cF\in\cC}L^2(\langle M,A_\cF\rangle)$ there is a conditional expectation $E:\bigoplus_{A_\cF\in\cC}\langle M,A_\cF\rangle \to Q.$ Here, $\bigoplus_{A_\cF\in\cC}\langle M,A_\cF\rangle$ is the direct sum in the category of von Neumann algebras, i.e., the $\ell^\infty$ direct sum, and $Q$ is viewed as a subalgebra via the diagonal embedding. We now observe that $E$ restricts to an expectation from $\bigoplus_{\cC} \langle M,N_\cF\rangle \to Q$ so that $L^2(Q) \prec \bigoplus_{\cC}L^2(\langle M,N_\cF\rangle) \prec \bigoplus L^2(M) \prec L^2(Q)$ as $Q$-$Q$-bimodules.

Since $\Gamma\curvearrowright I$ is faithful, nonamenable and has the VV property, and $B$ is diffuse, we have that $M$ is a full factor. Therefore so is $Q$. By Proposition 3.2 of \cite{bannon2020full}, since $Q$ if a full factor and $L^2(Q) \prec \bigoplus_{\cC}L^2(\langle M,N_\cF\rangle) \prec L^2(Q)$ as $Q$-$Q$-bimodules, we deduce that $L^2(Q) \subset \bigoplus_{\cC} L^2(\langle M,N_\cF\rangle)$.

Then for some partition $\cF$, $L^2(\langle M,N_\cF\rangle)$ has a nonzero $Q$-central vector. By Theorem \ref{thm-popa-fundamental}, $Q\prec_M N_\cF.$ In other words, $Q\prec_M (Z(B)^F \overline{\otimes} B^{I\setminus F})\rtimes \mathrm{Stab}(F)$. \qedhere

\end{proof}

We have now proved the main theorem of our paper:

\begin{proof}[P\textbf{roof of Theorem \ref{thm-main}.}]
    Combine Theorem \ref{thm-case1} and Theorem \ref{thm-case2}.
\end{proof}

\section{Applications}
\label{sect-corollaries}
\begin{cor}
\label{main-cor2}
Let $\Gamma\curvearrowright I$ be a faithful, nonamenable, transitive action with the VV property of a countable group on a countable set such that $\mathrm{Stab}(F)$ is amenable for some finite set $F\subset I$. Let $(B,\tau)$ be a \twoone factor. Write $M = B^I\rtimes\Gamma$. Then $M$ is prime.
\end{cor}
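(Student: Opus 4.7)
The plan is to argue by contradiction: assume $M = P \overline{\otimes} Q$ is a decomposition into two diffuse \twoone factors, and derive a contradiction. The hypotheses (nonamenable action with the VV property) already force $M$ to be a full \twoone factor, by the characterization at the end of Section \ref{subsect-vv}. Since $B$ is a factor, $Z(B) = \bC$, so applying Theorem \ref{thm-main} to $M = P \overline{\otimes} Q$ yields either (a) $P \prec_M L(\mathrm{Stab}(F))$ for every finite $F \subset I$, or (b) $Q \prec_M B^{I \setminus F} \rtimes \mathrm{Stab}(F)$ for some finite $F \subset I$. The symmetric application of Theorem \ref{thm-main} to $M = Q \overline{\otimes} P$ gives (a') or (b') with the roles of $P$ and $Q$ swapped.

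I would then rule out (a) (and symmetrically (a')) using the amenability hypothesis together with fullness. Specializing (a) to any $F \supseteq F_0$ (the set with $\mathrm{Stab}(F_0)$ amenable) gives $\mathrm{Stab}(F) \subseteq \mathrm{Stab}(F_0)$, hence $L(\mathrm{Stab}(F))$ is amenable; therefore $P$, being intertwined into a corner of an amenable algebra, is itself amenable. As $P$ is a diffuse \twoone factor, Connes' theorem forces $P \cong R$. But then $M = R \overline{\otimes} Q$ is McDuff and has nontrivial central sequences, contradicting the fullness of $M$. The mirror argument rules out (a').

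It remains to contradict the case (b) $\wedge$ (b'), yielding finite $F_P, F_Q \subset I$ with $P \prec_M B^{I \setminus F_P} \rtimes \mathrm{Stab}(F_P)$ and $Q \prec_M B^{I \setminus F_Q} \rtimes \mathrm{Stab}(F_Q)$. The plan is to pass to a common finite set $F \supseteq F_0 \cup F_P \cup F_Q$, so that $\mathrm{Stab}(F)$ is amenable and hence, by Lemma \ref{lem-rel-amen-crossed-prod}, $B^{I \setminus F} \rtimes \mathrm{Stab}(F)$ is amenable relative to $B^{I \setminus F}$ inside itself. The goal is to show that both $P$ and $Q$ are amenable relative to $B^{I \setminus F}$ in $M$; since $M = P \vee Q$, this would force $M$ itself to be amenable relative to $B^{I \setminus F}$, contradicting the nonamenability of the transitive action $\Gamma \curvearrowright I$ (which remains nonamenable when restricted to the cofinite set $I \setminus F$). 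The main obstacle, and the subtlest point, is transporting the intertwining in (b) from the specific $F_Q$ output by Theorem \ref{thm-main} to the larger common set $F$: enlarging $F$ shrinks the target algebra $B^{I \setminus F} \rtimes \mathrm{Stab}(F)$, so the intertwining does not transfer automatically. Carrying this out requires exploiting the transitivity of the action together with the relative-amenability techniques of Isono--Marrakchi \cite{isono2019tensor} and Bannon--Marrakchi--Ozawa \cite{bannon2020full}.
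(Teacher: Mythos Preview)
Your setup and the elimination of case (a) (and symmetrically (a')) are correct and match the paper exactly: fullness of $M$ forces $P$ and $Q$ to be full, hence nonamenable, so intertwining into the amenable $L(\mathrm{Stab}(F_0))$ is impossible.

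The divergence is in the endgame $(\mathrm{b})\wedge(\mathrm{b}')$, and here the paper takes a route that sidesteps your obstacle entirely. Rather than \emph{enlarging} to a common $F\supseteq F_P\cup F_Q$ (which shrinks the target and creates the transport problem you flag), the paper \emph{shrinks} each set to a singleton. Pick any $i\in F_P$; since $B^{I\setminus F_P}\rtimes\mathrm{Stab}(F_P)\subset B^{I\setminus\{i\}}\rtimes\mathrm{Stab}(i)$, the intertwining $P\prec_M B^{I\setminus\{i\}}\rtimes\mathrm{Stab}(i)$ is automatic. Transitivity now enters: choose $g\in\Gamma$ with $g^{-1}i\in F_Q$, so $Q\prec_M B^{I\setminus\{g^{-1}i\}}\rtimes\mathrm{Stab}(g^{-1}i)$, and conjugation by $u_g$ carries this target onto $B^{I\setminus\{i\}}\rtimes\mathrm{Stab}(i)$. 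Thus both $P$ and $Q$ intertwine into the \emph{same} corner. The paper then flips direction via the relative-commutant lemma (Lemma~3.5 of \cite{vaes2008explicit}): since $B^i$ lies in the commutant of $B^{I\setminus\{i\}}\rtimes\mathrm{Stab}(i)$, one gets $B^i\prec_M Q$ and $B^i\prec_M P$. Because $B$ is a factor, $Z((B^i)'\cap M)=\bC$, so Lemmas~2.4(3) and~2.8 of \cite{drimbe2019prime} upgrade this to $B^i\prec_M P\cap Q=\bC$, contradicting diffuseness of $B$.

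Your proposed route has genuine gaps beyond the transport obstacle you already name. First, intertwining $P\prec_M N$ does not by itself yield relative amenability $P\lessdot_M N$, so the step from $(\mathrm{b})$ to ``$P$ amenable relative to $B^{I\setminus F}$'' needs an argument you have not supplied. Second, the implication ``$P\lessdot_M N$ and $Q\lessdot_M N$ with $M=P\vee Q$ commuting $\Rightarrow M\lessdot_M N$'' is not a standard fact and requires justification. Third, the phrase ``the action restricted to $I\setminus F$'' is ill-posed since $I\setminus F$ is not $\Gamma$-invariant; this is reparable (pass to $B^I\supset B^{I\setminus F}$ and use that $M\lessdot_M B^I$ forces $\Gamma$ amenable), but as written it is incorrect. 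The paper's commutant-flip argument avoids all of these issues.
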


\begin{proof}
Write $M = P\overline{\otimes} Q$ so that $P$ and $Q$ are full factors. It is not possible for $P\prec_M L(\mathrm{Stab}(G))$ for all $G\subset I$ finite since $\mathrm{Stab}(F)$ is amenable for some finite set $F\subset I$. Similarly, it is not possible for $Q\prec_M L(\mathrm{Stab}(G))$ for all $G\subset I$ finite. Theorem \ref{thm-main} therefore tells us that there exist nonempty finite sets $F,G\subset I$ such that $P\prec_M B^{I\setminus F}\rtimes \mathrm{Stab}(F)$ and $Q\prec_M B^{I\setminus G}\rtimes \mathrm{Stab}(G).$ 

Since the action of $\Gamma$ on $I$ is transitive, there is $g\in \Gamma$ such that $\exists i\in  gG\cap F.$ Since $B^{I\setminus F}\rtimes \mathrm{Stab}(F) \subset B^{I\setminus \{i\}}\rtimes \mathrm{Stab}(i) $, we have that $P\prec_M B^{I\setminus \{i\}}\rtimes \mathrm{Stab}(i).$ Similarly, $Q\prec_M B^{I\setminus \{g^{-1}i\}}\rtimes \mathrm{Stab}(g^{-1}i)$. Now note that $u_g(B^{I\setminus \{g^{-1}i\}}\rtimes \mathrm{Stab}(g^{-1}i))u_g^{-1} = B^{I\setminus \{i\}}\rtimes \mathrm{Stab}(i)$. It now follows that $Q\prec_MB^{I\setminus \{i\}}\rtimes \mathrm{Stab}(i)$.

%Indeed, by Theorem \ref{thm-popa-fundamental}, it suffices to show that for all sequences $(u_n)$ of unitaries in $Q$ there exist $x,y\in M$ such that $\|E_{B^{I\setminus \{i\}}\rtimes \mathrm{Stab}(i)}(xu_ny)\|_2\not\to 0$. Since $$Q\prec_M B^{I\setminus \{g^{-1}i\}}\rtimes \mathrm{Stab}(g^{-1}i),$$ we already know that there are $a,b\in M$ such that $$\|E_{B^{I\setminus \{g^{-1}i\}}\rtimes \mathrm{Stab}(g^{-1}i)}(au_nb)\|_2\not\to0.$$ But then we can take $x = u_ga$ and $y=bu_g^{-1}$ because $$E_{B^{I\setminus \{i\}}\rtimes \mathrm{Stab}(i)}(u_gau_nbu_g^{-1}) = u_gE_{B^{I\setminus \{g^{-1}i\}}\rtimes \mathrm{Stab}(g^{-1}i)}(xu_ny)u_{g}^{-1}.$$

Now, taking relative commutants and applying Lemma 3.5 of \cite{vaes2008explicit}, we see that $B^i\prec_M P,Q$. By Lemma 2.4(3) of \cite{drimbe2019prime}, there are nonzero projections $$p,q\in Z((B^i)'\cap M) = Z(B^{I\setminus \{i\}}\rtimes \mathrm{Stab}(i)) = \bC1$$ such that $pB^i \prec^s_M P$ and $qB^i \prec^s_M Q$. But these projections are both 1, so $B^i\prec_M^s P,Q.$ By Lemma 2.8 of \cite{drimbe2019prime}, we have that $B^i \prec_M P\cap Q = \bC1$ which is impossible since $B$ is diffuse. This contradicts our assumption that $M = P\overline{\otimes} Q$ for \twoone factors $P,Q,$ so we conclude that $M$ is prime.
\end{proof}

\begin{proof}[P\textbf{roof of Theorem \ref{cor-main}.}] Since $\Gamma$ is not inner amenable, $\Gamma\curvearrowright I$ has the VV property by Proposition \ref{prop-vv-ez}. Apply Corollary \ref{main-cor2}. \qedhere
    
\end{proof}

%\begin{remark}\textcolor{red}{Pretty sure this is wrong} In Corollaries \ref{cor-main} and \ref{main-cor2} we can replace the assumption that $B$ is a diffuse tracial factor with the weaker assumption that $B$ is tracial and diffuse and $(B^{I\setminus \{i\}})'\cap M$ also a factor. In particular, it suffices to assume that $B$ is tracial, diffuse, and the action of $\mathrm{Stab}(i)$ on $I\setminus\{i\}$ has infinite orbits.
    
%\end{remark}

We can also recover and generalize Theorem C of Isono-Marrakchi \cite{isono2019tensor} by noting that the left multiplication of $\Gamma$ on itself is free, ergodic, has the VV property, and has amenable stabilizers. We first state a lemma which is proved as part of Lemma 7.2 in \cite{isono2019tensor}:

\begin{lem}
\label{isono-marrakchi-7.2}
    Let $\Gamma\curvearrowright I$ be a free action with infinite orbits. Let $(B,\tau)$ be a nontrivial tracial von Neumann algebra. Write $M = B^I\rtimes\Gamma$. Let $C\subset B$ be a possibly trivial von Neumann subalgebra. Let $Q\subset M$ be a von Neumann subalgebra such that $Q \prec_M (C^I\vee B^F)$ for some finite set $F\subset I$ but $Q\not\prec_M C^I$. Then $\cN_{pMp}(pQ)''\prec_M B^I\rtimes \mathrm{Stab}(F)$ for some projection $p \in Q'\cap M.$
\end{lem}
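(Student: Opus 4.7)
My plan is to combine Popa's intertwining theorem (Theorem \ref{thm-popa-fundamental}) with a $\Gamma$-Fourier support analysis in the spirit of the arguments already developed in Theorems \ref{thm-exists-StabF} and \ref{thm-all-StabF}. First, using the hypothesis $Q \prec_M C^I \vee B^F$, I extract a nonzero partial isometry $v \in M$, a projection $p = vv^* \in Q'\cap M$, a projection $q \in C^I\vee B^F$, and a normal $*$-homomorphism $\theta \colon pQp \to q(C^I\vee B^F)q$ with $xv = v\theta(x)$ for all $x \in pQp$. The hypothesis $Q \not\prec_M C^I$ allows, via the standard refinement of Remark 3.8 in \cite{vaes2008explicit}, choosing $\theta$ so that additionally $\theta(pQp) \not\prec_{C^I\vee B^F} C^I$; this ensures the intertwining genuinely uses the $B^F$-factor in the decomposition $C^I\vee B^F = C^{I\setminus F}\overline{\otimes} B^F \subset B^I$.

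Next, for any $u \in \cN_{pMp}(pQp)$, the identity $x(uv) = u(u^*xu)v = (uv)\theta(u^*xu)$ shows that $uv$ is another intertwiner. Consequently $v^*uv \in qMq$ satisfies the bimodular relation
\begin{equation*}
    \theta(x)\,(v^*uv) = (v^*uv)\,\theta(u^*xu)\qquad (x\in pQp),
\end{equation*}
so $v^*\cN_{pMp}(pQp)v$ consists of partial isometries intertwining the image $\theta(pQp)$ with its $\mathrm{Ad}(u^*)$-conjugate inside $qMq$. Writing such an element in its $\Gamma$-Fourier expansion $w = \sum_{g\in\Gamma} w_g \lambda_g$ with $w_g \in B^I$, the crucial claim is that $w_g = 0$ whenever $g \notin \mathrm{Stab}(F)$.

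To prove the Fourier-support claim, I fix $g \notin \mathrm{Stab}(F)$, so that $gi \neq i$ for some $i \in F$. Then $\sigma_g$ sends the $i$-th tensor coordinate (which is $B$-valued in $C^I\vee B^F$) to the $gi$-th coordinate, which is either outside $F$ (where $q$ is only $C$-valued) or inside $F$ but rearranged. Pairing the bimodular identity against pure tensors and performing a $\|\cdot\|_2$-computation analogous to Claim A in the proof of Theorem \ref{thm-exists-StabF} shows that a nonzero $w_g$ would produce a partial isometry realizing $\theta(pQp) \prec_{C^I\vee B^F} C^I$, contradicting Step 1. Hence the von Neumann algebra generated by $v^*\cN_{pMp}(pQp)v$ lies in $q(B^I\rtimes \mathrm{Stab}(F))q$; translating back through $v$ and applying Theorem \ref{thm-popa-fundamental}(d) gives $\cN_{pMp}(pQ)'' \prec_M B^I\rtimes \mathrm{Stab}(F)$.

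The main obstacle is the Fourier-support step. The delicate part is that the orthogonal complement of $C$ in $B$ must be invoked precisely enough that the hypothesis $\theta(pQp) \not\prec_{C^I\vee B^F} C^I$ is the thing being violated — one needs to verify that a nonzero $w_g$ (for $g \notin \mathrm{Stab}(F)$) genuinely produces a witnessing intertwiner into $C^I$ rather than merely an $L^2$-cancellation. This is where the detailed structure of $C^I \vee B^F$ and the freeness of $\Gamma\curvearrowright I$ (to ensure tensor automorphisms are properly outer, via Lemma \ref{lem-tensor-outer-free}) both enter, in a manner parallel to but more intricate than the displacement arguments of Section \ref{sect-rigidfactor}.
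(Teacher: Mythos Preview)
The paper gives no self-contained argument for this lemma; it simply cites Lemma~7.2 of \cite{isono2019tensor} and says the proof is identical except for the last line. Your plan---extract $(v,\theta,p,q)$ via Theorem~\ref{thm-popa-fundamental} and the refinement of Remark~3.8 in \cite{vaes2008explicit}, observe that $v^*uv$ intertwines $\theta(pQp)$ with a conjugate for each normalizing unitary $u$, then control the $\Gamma$-Fourier support of $v^*uv$---is exactly the strategy of that cited proof, so the approaches agree.

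Two refinements on the Fourier-support step, which you rightly flag as the crux. First, the assertion that $w_g=0$ for all $g\notin\mathrm{Stab}(F)$ is slightly too strong. When $g\in\mathrm{Norm}(F)\setminus\mathrm{Stab}(F)$ the automorphism $\sigma_g$ preserves $C^I\vee B^F$, so a nonzero $w_g$ merely intertwines $\theta(pQp)$ with $\sigma_g(\theta(pQp))\subset C^I\vee B^F$ and produces no contradiction with $\theta(pQp)\not\prec C^I$. This is harmless for the conclusion since $[\mathrm{Norm}(F):\mathrm{Stab}(F)]<\infty$, but the honest target of the support argument is $\mathrm{Norm}(F)$, not $\mathrm{Stab}(F)$.

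Second, for $g\notin\mathrm{Norm}(F)$ your heuristic ``a nonzero $w_g$ forces $\theta(pQp)\prec_{C^I\vee B^F} C^I$'' is not quite what the computation delivers. A nonzero $w_g$ witnesses $\theta(pQp)\prec_{B^I}\sigma_g(C^I\vee B^F)=C^I\vee B^{gF}$, and combined with $\theta(pQp)\subset C^I\vee B^F$ this only yields $\theta(pQp)\prec C^I\vee B^{F\cap gF}$ with $F\cap gF\subsetneq F$, not an embedding into $C^I$. The standard remedy (and what the cited argument does) is to choose $F$ \emph{minimal} when setting up $\theta$: refine via Remark~3.8 of \cite{vaes2008explicit} so that $\theta(pQp)\not\prec C^I\vee B^{F'}$ for every proper $F'\subsetneq F$. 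With that in place the displayed conclusion is already the desired contradiction, and the support genuinely lies in $\{g:F\subset gF\}=\mathrm{Norm}(F)$. After this adjustment your outline goes through.
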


\begin{proof}
    The same as the proof of Lemma 7.2 in \cite{isono2019tensor}, except for the last line.
\end{proof}

\begin{proof}[P\textbf{roof of Theorem \ref{cor-main-2}.}]
    Write $M = P\overline{\otimes}Q.$ Note that we cannot apply Theorem \ref{thm-main} directly since $B$ is not assumed here to be diffuse. However, the proof of Theorem \ref{thm-case1} does not require diffuseness, so its conclusion still applies. But it is not possible for $P\prec_M L(\mathrm{Stab}(F))$ as $\mathrm{Stab}(F)$ is amenable. So it must be that we are in the setting of Theorem \ref{thm-case2}. Namely, it must be that $Q'\cap \Tilde{M}^\omega\not\subset M^\omega$.

    That is to say, it must be that $L^2(Q) \prec \bigoplus_F L^2\left(\inp{M}{B^{I\setminus F}\rtimes\mathrm{Stab}(F)}\right).$ By Theorem \ref{thm-weak-contain-bmo}, we obtain a conditional expectation $E$ from $ \bigoplus_F \inp{M}{B^{I\setminus F}\rtimes\mathrm{Stab}(F)}$ down to the diagonal embedding of $Q.$ This restricts to a conditional expectation of $\bigoplus_F \inp{M}{B^{I}\rtimes\mathrm{Stab}(F)}$ onto $Q$. We therefore have that $$L^2(Q) \prec \bigoplus_F L^2\left(\inp{M}{B^{I}\rtimes\mathrm{Stab}(F)}\right).$$ By Lemma \ref{lem-rel-amen-bimods} we now deduce that $L^2(Q)\prec \bigoplus_F L^2\left(\inp{M}{B^{I}}\right).$ Since weak containment doesn't see infinite direct sums of the same bimodule, we have that $ L^2(Q)\prec L^2(\inp{M}{B^{I}}).$

     Since $B^I$ is the increasing union of $Z(B)^{I\setminus F}\overline{\otimes} B^F,$ Proposition 2.5 of \cite{isono2019tensor} says that $$L^2(Q) \prec \bigoplus_F L^2\left(\inp{M}{Z(B)^{I\setminus F}\overline{\otimes} B^F}\right).$$ By Proposition \ref{prop-wk-bicent-ez} the algebras $Z(B)^{I\setminus F}\overline{\otimes} B^F$ are all weakly bicentralized and so we have
    $$L^2(Q) \prec \bigoplus_F L^2\left(\inp{M}{Z(B)^{I\setminus F}\overline{\otimes} B^F}\right) \prec \bigoplus L^2(M) \prec L^2(Q).$$ By Proposition 3.2 of \cite{bannon2020full}, we have that since $Q$ is full, $$L^2(Q) \subset \bigoplus_F L^2(\inp{M}{Z(B)^{I\setminus F}\overline{\otimes} B^F}).$$ Therefore $L^2(Q) \subset L^2(\inp{M}{Z(B)^{I\setminus F}\overline{\otimes} B^F})$ for some finite set $F\subset I$. Hence $Q \prec_M Z(B)^{I\setminus F}\overline{\otimes} B^F$ for some $F\subset I$. We now apply Lemma \ref{isono-marrakchi-7.2} with $C = Z(B)$, which says that either $Q\prec_M Z(B)^I$ or $pMp \prec_M B^I\rtimes \mathrm{Stab}(F)$ for some projection $p\in P = Q'\cap M$. The latter is impossible since $\mathrm{Stab}(F)$ has infinite index in $\Gamma,$ and the former is impossible since we are assuming $Q$ is a diffuse tensor factor of $M,$ and is therefore full, and hence nonamenable. We conclude that $M$ must be prime.
\end{proof}

We finish with some remarks about when the conclusion of Theorem \ref{thm-case1} cannot hold. We repeat Ioana and Spaas's remark from \cite{ioana2021ii1} that condition (2) in the following proposition holds when the subgroups $\mathrm{Stab}(F)$ are the tails of either a direct sum or of a free product.

\begin{prop}
Let $\Gamma\curvearrowright I$ be a faithful, nonamenable action with the VV property of a countable group on a countable set. Let $(B,\tau)$ be a nontrivial tracial von Neumann algebra. Write $M = B^I\rtimes\Gamma$ and suppose $M = P\overline{\otimes} Q$ is a tensor decomposition of $M$ where $P$ and $Q$ are both \twoone factors. Suppose that any of the following conditions are satisfied:
\begin{enumerate}
    \item[(a)] There is $F\subset I$ finite such that $\mathrm{Stab}(F)$ is amenable.
    \item[(b)] There are von Neumann subalgebras $M_F \subset M$ such that \newline $L^2(M) \prec L^2(\mathrm{Stab}(F)) \overline{\otimes} L^2(M_F)$ as $L(\mathrm{Stab}(F))$-$M_F$ bimodules for all $F\subset I$ finite and $\|x- E_{M_F}(x)\|_2\to 0$ for all $x\in M$.
    %\item $L\Gamma$ is s-prime.
    \item[(c)] $B = L^\infty(X)$ is a diffuse abelian von Neumann algebra. Let $\cR$ be the equivalence relation for the generalized Bernoulli action of $\Gamma$ on $X^I$. Assume that $\cR = \cR_1\times \cR_2$ is a product decomposition of $\cR$ and that $P = L(\cR_1)$ and $Q = L(\cR_2).$ 
\end{enumerate}
Then there is $F\subset I$ finite such that $P\not\prec_M L(\mathrm{Stab}(F)).$
\end{prop}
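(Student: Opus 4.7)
The overall strategy is to argue by contradiction in each case, assuming $P \prec_M L(\mathrm{Stab}(F))$ for \emph{every} finite $F \subset I$ and deriving an inconsistency. A uniform preliminary is that the standing hypotheses (faithful nonamenable action with the VV property, $B$ nontrivial, and the tensor decomposition $M = P \overline{\otimes} Q$ with $P, Q$ both II$_1$ factors) force $M$ to be a full factor, so both tensor factors $P$ and $Q$ are full; in particular, no nonzero corner of $P$ is amenable.

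For condition (a), the argument is essentially immediate. Applying Theorem \ref{thm-popa-fundamental}(e) to $P \prec_M L(\mathrm{Stab}(F))$ for an amenable $\mathrm{Stab}(F)$ produces nonzero projections $p \in P$, $q \in L(\mathrm{Stab}(F))$ and a unital $*$-homomorphism $\theta\colon pPp \to q L(\mathrm{Stab}(F)) q$. Since $pPp$ is a II$_1$ factor, $\theta$ is injective, so $pPp$ embeds into an amenable algebra and is therefore amenable, contradicting fullness of $P$.

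For condition (b), the plan follows the template of Ioana and Spaas \cite{ioana2021ii1}. The hypothesis places $L(\mathrm{Stab}(F))$ and $M_F$ in an approximately coarse configuration inside $M$, while the approximation $\|x - E_{M_F}(x)\|_2 \to 0$ guarantees that the $M_F$'s saturate $M$ in 2-norm. For each $F$, Theorem \ref{thm-popa-fundamental}(b) supplies a nonzero element $h_F \in \langle M, L(\mathrm{Stab}(F))\rangle_+ \cap P'$ of finite trace. The coarse bimodule decomposition lets one trade the $L(\mathrm{Stab}(F))$-side of these central data against the $M_F$-side; then, as $F$ is enlarged along the structural filtration (the tails of a direct sum or free product, with trivial intersection), the $M_F$-part grows to all of $M$ while the $L(\mathrm{Stab}(F))$-part shrinks toward scalars. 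The limit of the intertwining data then forces $P \prec_M \mathbb{C}$, contradicting that $P$ is diffuse. The technical heart, and the main obstacle, is making this simultaneous passage to the limit rigorous while respecting the coarse weak containment — this is exactly the spectral-gap/averaging step where the specific structural hypothesis on how the $\mathrm{Stab}(F)$ complement $M_F$ gets used.

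For condition (c), exploit the Cartan structure. Since $B = L^\infty(X)$ is abelian, $M = L(\mathcal{R})$ has Cartan $L^\infty(X^I)$, and the product decomposition $\mathcal{R} = \mathcal{R}_1 \times \mathcal{R}_2$ provides Cartans $A_1 \subset P$ and $A_2 \subset Q$ with $A_1 \overline{\otimes} A_2 = L^\infty(X^I)$. If $P \prec_M L(\mathrm{Stab}(F))$ for every finite $F$, then (using Lemma 3.5 of \cite{vaes2008explicit}, as in the endgame of the proof of Corollary \ref{main-cor2}) the relative commutant arrangement forces $A_2 = Q \cap L^\infty(X^I)$ to intertwine into the corresponding $B^{I\setminus F}\rtimes\mathrm{Stab}(F)$-corners for all $F$, and then taking commutants once more and invoking Lemma 2.8 of \cite{drimbe2019prime} collapses a diffuse subalgebra into $\mathbb{C}$, which is absurd. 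The chief delicacy here is tracking the Cartan inclusion $A_1 \subset P$ through the intertwining without losing factoriality of the corners, but the rigidity of the product equivalence relation provides enough control.
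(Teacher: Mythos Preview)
Your treatment of (a) is correct and matches the paper.

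For (b), you name Ioana--Spaas but then sketch a different mechanism that you yourself flag as incomplete. The paper's argument is much shorter and uses relative amenability rather than intertwining as the limiting notion: from $P \prec_M L(\mathrm{Stab}(F))$ for every finite $F$ one passes to $P \lessdot_M L(\mathrm{Stab}(F))$, and then Lemma~2.6 of \cite{ioana2021ii1} applies verbatim --- the bimodule weak containment and the condition $\|x - E_{M_F}(x)\|_2 \to 0$ in (b) are exactly its hypotheses --- yielding $P \lessdot_M \bigcap_F L(\mathrm{Stab}(F)) = \bC$, so $P$ is amenable, contradicting fullness. Your attempt to carry finite-trace elements $h_F \in \langle M, L(\mathrm{Stab}(F))\rangle_+$ through a limit to reach $P \prec_M \bC$ aims at the wrong target: intertwining does not pass to decreasing intersections, whereas relative amenability does under precisely the structural assumptions (b) supplies.

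For (c), your route through Lemma~3.5 of \cite{vaes2008explicit} and Lemma~2.8 of \cite{drimbe2019prime} is both indirect and misapplied: taking commutants on $P \prec_M L(\mathrm{Stab}(F))$ would produce a statement about $L(\mathrm{Stab}(F))' \cap M$ intertwining into $P'\cap M = Q$, not about $A_2$ intertwining into $B^{I\setminus F}\rtimes\mathrm{Stab}(F)$ as you assert, and it is unclear how the remainder of your sketch would then proceed. The paper instead exploits the Cartan structure with a direct computation: $L^\infty(X_1) \subset P$ is diffuse and sits inside $L^\infty(X^I)$, and for any weakly null unitaries $v_n \in L^\infty(X_1)$ one has $\|E_{L\Gamma}(bu_g v_n c u_h)\|_2 = |\tau(\sigma_{g^{-1}}(b)\, v_n c)| \to 0$ for all $b,c \in L^\infty(X^I)$ and $g,h\in\Gamma$. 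Hence $L^\infty(X_1) \not\prec_M L\Gamma$, and since $L^\infty(X_1)\subset P$ this forces $P \not\prec_M L(\mathrm{Stab}(F))$ for every finite $F$.
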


\begin{proof}
If (a) holds, then the conclusion follows since $M$ is full and hence $P$ is full, thus nonamenable.

If (b) holds and $P\prec_M L(\mathrm{Stab}(F))$ for all $F\subset I$ finite, then in particular $P\lessdot_M L(\mathrm{Stab}(F))$, and by Lemma 2.6 of \cite{ioana2021ii1}, we have that $P \lessdot_M \cap_{F}L(\mathrm{Stab}(F)) = \bC1.$ So then $P$ is amenable, again contradicting the fullness of $P.$

%If (iii) holds then Proposition 2.4 of \cite{chifan2016primeness} implies that either the conclusion holds or $P$ has finite index in $L\mathrm{Stab}(F)$. But then either 

If (c) holds, note that since $\mathcal{R} = \mathcal{R}_1\times\mathcal{R}_2$ we have that $L^\infty(X_i)\subset L^\infty(X)$ for $i=1,2.$ Note too that $X_1$ is diffuse since $L^\infty(X_1)$ is a MASA in the \twoone factor $P.$ Furthermore, we know that $L^\infty(X_1) \subset P$ embeds in a corner of $L\Gamma$ if $P$ does. But note that if $v_n\to 0$ in $L^\infty(X_1)$ and for $b,c\in L^\infty(X)$ and $g,h\in \Gamma,$ $E_{L\Gamma}(bu_gv_ncu_h) = E_{L\Gamma}(b\sigma_g(v_nc)u_{gh}) = \tau(b\sigma_g(v_nc)) = \tau(\sigma_{g^{-1}}(b)v_nc) \to 0.$ So it cannot be that $L^\infty(X_1)\prec_M L\Gamma.$ Hence $P \not\prec_M L(\mathrm{Stab}(F))$ for any $F$. \qedhere

\end{proof}

In the setting of equivalence relations, we can therefore get slightly further than Theorem \ref{thm-main}:

\begin{cor}
    Let $\Gamma\curvearrowright I$ be a faithful, nonamenable action with the VV property of a countable group on a countable set. Let $X$ be a nonatomic probability space. Then $\Gamma\curvearrowright X^I$ is a free, nonamenable, ergodic action. Let $\cR = \cR(\Gamma\curvearrowright X^I)$ be the corresponding orbit equivalence relation. Let $\cR = \cR_1\times \cR_2$ be a product decomposition. Write $M = L(\cR) = L^\infty(X^I)\rtimes\Gamma$. Then there exist nonempty finite sets $F,G\subset I$ such that $L(\cR_1) \prec_M L^\infty(X^I)\rtimes \mathrm{Stab}(F)$ and $L(\cR_2) \prec_M L^\infty(X^I)\rtimes\mathrm{Stab}(G).$
\end{cor}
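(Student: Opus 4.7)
The strategy is a direct double application of Theorem \ref{thm-main}, combined with the proposition immediately preceding. With $B = L^\infty(X)$ (diffuse since $X$ is nonatomic, abelian since $X$ is a probability space), the standing hypotheses on $\Gamma \curvearrowright I$ are exactly those of Theorem \ref{thm-main}. The key simplification is that abelianness of $B$ collapses the target algebra of condition (b), namely
\[
(Z(B)^F \overline{\otimes} B^{I \setminus F}) \rtimes \mathrm{Stab}(F) \;=\; B^I \rtimes \mathrm{Stab}(F) \;=\; L^\infty(X^I) \rtimes \mathrm{Stab}(F),
\]
precisely matching the conclusion of the corollary. Before invoking the theorem, I would note that $\Gamma \curvearrowright X^I$ is free (Lemma \ref{lem-tensor-outer} gives that the tensor action is properly outer, which for abelian $B$ is equivalent to essential freeness of the measure-theoretic action), nonamenable (inherited from $\Gamma \curvearrowright I$), and ergodic (standard for generalized Bernoulli actions with infinite orbits), so $M = L(\cR)$ and its tensor decomposition $L(\cR_1) \overline{\otimes} L(\cR_2)$ are well-defined.

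Applying Theorem \ref{thm-main} with $P = L(\cR_1)$ in the first tensor slot yields the dichotomy $(a_1)$ $P \prec_M L(\mathrm{Stab}(F))$ for every finite $F \subset I$, or $(b_1)$ $Q \prec_M L^\infty(X^I) \rtimes \mathrm{Stab}(G)$ for some finite $G \subset I$. Swapping the roles of $P$ and $Q$ and applying the theorem again produces the analogous dichotomy $(a_2)$ $Q \prec_M L(\mathrm{Stab}(F))$ for every finite $F$, or $(b_2)$ $P \prec_M L^\infty(X^I) \rtimes \mathrm{Stab}(F)$ for some finite $F$.

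It remains to rule out $(a_1)$ and $(a_2)$. Both follow from the preceding proposition under its condition (c), which is exactly our setting: its proof uses the diffuse MASA $L^\infty(X_i) \subset L(\cR_i)$ and the computation $E_{L\Gamma}(b u_g v_n c u_h) = \tau(\sigma_{g^{-1}}(b) v_n c) \to 0$ for weakly-null unitaries $v_n \in L^\infty(X_i)$ to show $L(\cR_i) \not\prec_M L\Gamma$, and hence $L(\cR_i) \not\prec_M L(\mathrm{Stab}(F))$ for any finite $F$. Applied with $i = 1$ this rules out $(a_1)$, and with $i = 2$ it rules out $(a_2)$, leaving $(b_1)$ and $(b_2)$ as the only possibilities and producing the desired finite sets $F, G \subset I$. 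I anticipate no serious obstacle: Theorem \ref{thm-main} and the preceding proposition together essentially contain the full argument, and the abelianness of $B$ does all the remaining work by identifying condition (b) with the target algebra in the statement.
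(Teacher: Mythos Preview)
Your proposal is correct and matches the paper's intended approach: the corollary is stated without proof precisely because it follows immediately from applying Theorem \ref{thm-main} twice (with the roles of $P$ and $Q$ swapped) and ruling out alternative (a) in each case via part (c) of the preceding proposition, with the abelianness of $B=L^\infty(X)$ collapsing $(Z(B)^F\overline\otimes B^{I\setminus F})\rtimes\mathrm{Stab}(F)$ to $L^\infty(X^I)\rtimes\mathrm{Stab}(F)$. The only point you leave implicit is that the finite set $F$ produced by Theorem \ref{thm-case2} is automatically nonempty (the class $\cC$ in its proof indexes partitions of nonempty finite subsets), which the paper also treats as understood.
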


\end{document}